\definecolor{luh-dark-blue}{rgb}{0.0, 0.313, 0.608}
\numberwithin{equation}{section}
\newtheoremstyle{thmlemcorr}{10pt}{10pt}{\itshape}{}{\bfseries}{.}{10pt}{{\thmname{#1}\thmnumber{ #2}\thmnote{ (#3)}}}
\newtheoremstyle{thmlemcorr*}{10pt}{10pt}{\itshape}{}{\bfseries}{.}\newline{{\thmname{#1}\thmnumber{ #2}\thmnote{ (#3)}}}
\newtheoremstyle{remexample}{10pt}{10pt}{}{}{\bfseries}{.}{10pt}{{\thmname{#1}\thmnumber{ #2}\thmnote{ (#3)}}}
\newtheoremstyle{ass}{10pt}{10pt}{}{}{\bfseries}{.}{10pt}{{\thmname{#1}\thmnumber{ A#2}\thmnote{ (#3)}}}
\theoremstyle{thmlemcorr}
\newtheorem{theorem}{Theorem}
\numberwithin{theorem}{section}
\newtheorem{lemma}[theorem]{Lemma}
\newtheorem{proposition}[theorem]{Proposition}
\newtheorem{definition}[theorem]{Definition}
\theoremstyle{thmlemcorr*}
\newtheorem*{theorem*}{Theorem}
\newtheorem{lemma*}[theorem]{Lemma}
\newtheorem{corollary*}[theorem]{Corollary}
\newtheorem{proposition*}[theorem]{Proposition}
\newtheorem{problem*}[theorem]{Problem}
\newtheorem{conjecture*}[theorem]{Conjecture}
\newtheorem{definition*}[theorem]{Definition}
\newtheorem{assumption*}[theorem]{Assumption}
\theoremstyle{remexample}
\newtheorem{remark}[theorem]{Remark}
\theoremstyle{ass}
\newcommand{\Acal}{\mathcal{A}}
\newcommand{\Dcal}{\mathcal{D}}
\newcommand{\Fcal}{\mathcal{F}}
\newcommand{\Lcal}{\mathcal{L}}
\newcommand{\N}{\mathbb{N}}
\newcommand{\R}{\mathbb{R}}
\newcommand{\eps}{\varepsilon}
\def\XXint#1#2#3{{\setbox0=\hbox{$#1{#2#3}{\int}$}
\vcenter{\hbox{$#2#3$}}\kern-.5\wd0}}
\renewcommand{\eps}{\varepsilon}
\renewcommand{\phi}{\varphi}
\begin{document}

%% TITLE MATTERS

\title[]{Long-time behaviour and stability for quasilinear doubly degenerate parabolic equations of higher order}

\author{Jonas Jansen}
\address{\textit{Jonas Jansen:}  Institute of Applied Mathematics, University of Bonn, Endenicher Allee~60, 53115 Bonn, Germany}
\email{jansen@iam.uni-bonn.de}

\author{Christina Lienstromberg}
\address{\textit{Christina Lienstromberg:}  Institute of Analysis, Dynamics and Modeling, University of Stuttgart, Pfaffenwaldring~57, 70569 Stuttgart, Germany}
\email{christina.lienstromberg@iadm.uni-stuttgart.de}

\author{Katerina Nik}
\address{\textit{Katerina Nik:} Faculty of Mathematics, University of Vienna, Oskar-Morgenstern-Platz 1, 1090 Vienna, Austria}
\email{katerina.nik@univie.ac.at}

\begin{abstract}
We study the long-time behaviour of solutions to quasilinear doubly degenerate parabolic problems of fourth order. The equations model for instance the dynamic behaviour of a non-Newtonian thin-film flow on a flat impermeable bottom and with zero contact angle. We consider a shear-rate dependent fluid the rheology of which is described by a constitutive power-law or Ellis-law for the fluid viscosity. In all three cases, positive constants (i.e. positive flat films) are the only positive steady-state solutions. Moreover, we can give a detailed picture of the long-time behaviour of solutions with respect to the $H^1(\Omega)$-norm.
In the case of shear-thickening power-law fluids, one observes that solutions which are initially close to a steady state, converge to equilibrium in finite time. In the shear-thinning power-law case, we find that steady states are polynomially stable in the sense that, as time tends to infinity, solutions which are initially close to a steady state, converge to equilibrium at rate $1/t^{1/\beta}$ for some $\beta > 0$. Finally, in the case of an Ellis-fluid, steady states are exponentially stable in $H^1(\Omega)$.
\end{abstract}
\vspace{4pt}

\maketitle

\noindent\textsc{MSC (2010): 76A05, 76A20, 35B40, 35Q35, 35K35, 35K65}

\noindent\textsc{Keywords: non-Newtonian fluid, power-law fluid, Ellis fluid, degenerate parabolic equation, weak solution, long-time asymptotics, thin-film equation}

%\vspace{4pt}

%\noindent\textsc{Date:} \today{}.
%\end{abstract}

%% PDF MATTERS

%% START OF CONTENT

%=============================================================================
%=============================================================================
%=============================================================================

\section{Introduction}

\subsection{Aim of the paper} 
The present paper is concerned with the asymptotic behaviour of positive weak solutions to fourth-order quasilinear (doubly) degenerate parabolic problems as they arise in the modelling of non-Newtonian thin-film flows. It turns out that, for large times, fluids with a shear-rate dependent viscosity exhibit a specific asymptotic behaviour, depending on their shear-thickening or shear-thinning nature, respectively.

We consider a thin layer of a viscous, non-Newtonian and incompressible fluid on an impermeable flat bottom, as sketched in Figure \ref{fig:thin-film}.

%----------------------------------------------------
%----------------------------------------------------

\begin{center}
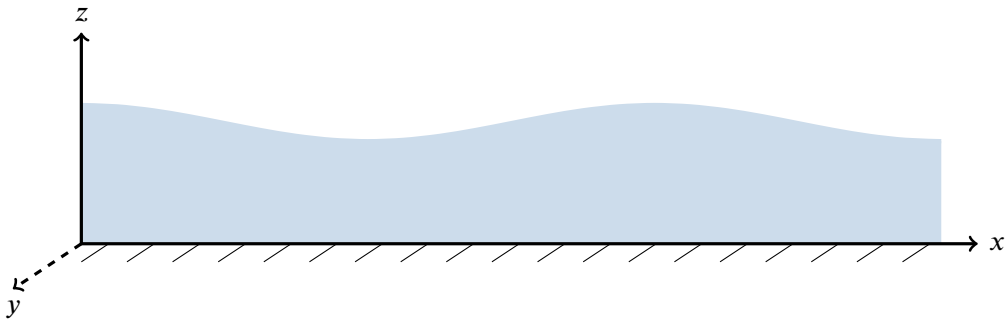
\begin{figure}[h]
\begin{tikzpicture}[domain=0:3*pi, xscale=1.2, yscale=0.8] 
\draw[ultra thick, smooth, variable=\x, luh-dark-blue!20] plot (\x,{0.3*cos(\x r)+2}); 
\fill[luh-dark-blue!20] plot[domain=0:3*pi] (\x,{0}) -- plot[domain=3*pi:0] (\x,{0.3*cos(\x r)+2});
\draw[very thick,<->] (3*pi+0.4,0) node[right] {$x$} -- (0,0) -- (0,3.5) node[above] {$z$};
\draw[very thick,dashed,->] (0,0) -- (-0.75,-0.75) node[below] {$y$};
\draw[-] (0,-0.3) -- (0.3, 0);
\draw[-] (0.5,-0.3) -- +(0.3, 0.3);
\draw[-] (1,-0.3) -- +(0.3, 0.3);
\draw[-] (1.5,-0.3) -- +(0.3, 0.3);
\draw[-] (2,-0.3) -- +(0.3, 0.3); 
\draw[-] (2.5,-0.3) -- +(0.3, 0.3);
\draw[-] (3,-0.3) -- +(0.3, 0.3);
\draw[-] (3.5,-0.3) -- +(0.3, 0.3);
\draw[-] (4,-0.3) -- +(0.3, 0.3);
\draw[-] (4.5,-0.3) -- +(0.3, 0.3);
\draw[-] (5,-0.3) -- +(0.3, 0.3);
\draw[-] (5.5,-0.3) -- +(0.3, 0.3);
\draw[-] (6,-0.3) -- +(0.3, 0.3);
\draw[-] (6.5,-0.3) -- +(0.3, 0.3);
\draw[-] (7,-0.3) -- +(0.3, 0.3);
\draw[-] (7.5,-0.3) -- +(0.3, 0.3);
\draw[-] (8,-0.3) -- +(0.3, 0.3);
\draw[-] (8.5,-0.3) -- +(0.3, 0.3);
\draw[-] (9,-0.3) -- +(0.3, 0.3);
\end{tikzpicture}   
\caption{Cross section of fluid film on impermeable solid bottom.}
\label{fig:thin-film}
\end{figure} 
\end{center}

%----------------------------------------------------
%----------------------------------------------------

In addition to the non-Newtonian fluid rheology, the following modelling assumptions are crucial for the analysis of the resulting partial differential equations. First, the fluid flow is assumed to be uniform in one horizontal direction (in $y$-direction in Figure \ref{fig:thin-film}), such that we obtain a (spatially) one-dimensional problem. Moreover, we assume that the characteristic height of the fluid layer is rather thin compared to its characteristic length and consider the asymptotic limit of a vanishing aspect ratio. Based on a non-Newtonian Navier--Stokes system, we use the so-called lubrication approximation \cite{giacomelli_rigorous_2003,gunther_justification_2008,ockendon_viscous_1995} in order to derive an evolution equation for the height $u = u(t,x)\geq 0$ of the fluid film at time $t > 0$ and spatial position $x \in \Omega$, where $\Omega \subset \R$ is a bounded interval. We neglect gravitational effects and assume that the dynamics of the flow is driven by capillarity only. Finally, we prescribe a no-slip condition on the lower boundary of the fluid film. However, the mathematical analysis of the present paper does also apply to the case of Navier-slip conditions. 

As constitutive laws for the non-Newtonian shear-dependent fluid we consider so-called \textbf{power-law fluids}, also called \textbf{Ostwald-de Waele fluids}, and so-called \textbf{Ellis-fluids}; see below for more details on these material laws. In the case of power-law fluids, when prescribing a no-slip condition on the lower boundary, the resulting evolution problem reads
\begin{equation} \label{eq:PDE_power-law}
	\begin{cases}
		u_t + \bigl(u^{\alpha+2} |u_{xxx}|^{\alpha-1} u_{xxx}\bigr)_x = 0, &
		t > 0,\ x \in \Omega,
		\\
		u_x(t,x) = u_{xxx}(t,x) = 0,
		&
		t > 0,\ x \in \partial\Omega,
		\\
		u(0,x) = u_0(x), 
		&
		x \in \Omega.
	\end{cases}
\end{equation}
Note that $\eqref{eq:PDE_power-law}_1$ is a fourth-order quasilinear parabolic equation that is doubly degenerate in the sense that the degeneracy occurs both with respect to the unknown $u$ and with respect to its third spatial derivative $u_{xxx}$. 
The Neumann-type boundary conditions $u_x = u_{xxx} = 0$ on $\partial\Omega$ reflect the zero-contact angle condition and the no-flux condition at the lateral boundary, respectively. Finally, $u_0 > 0$ denotes the given positive initial film height. Note that for $0 < \alpha<1$, the coefficients of the highest-order term depend only H\"older continuously on the unknown and lower-order derivatives.

In the case of Ellis-fluids, we obtain the evolution equation
\begin{equation} \label{eq:PDE_Ellis}
	\begin{cases}
		u_t + a\bigl(u^3 \bigl[1+b|uu_{xxx}|^{\alpha-1}\bigr] u_{xxx}\bigr)_x = 0, &
		t > 0,\ x \in \Omega,
		\\
		u_x(t,x) = u_{xxx}(t,x) = 0,
		&
		t > 0,\ x \in \partial\Omega,
		\\
		u(0,x) = u_0(x), 
		&
		x \in \Omega.
	\end{cases}
\end{equation}
Here, $a,b > 0$ are positive physical parameters, depending on the constant surface tension, the flow-behaviour exponent $\alpha$ and the characteristic viscosity of the fluid. However, for clarity of presentation, we drop these parameters in our analysis since they do not affect our arguments. This equation has for instance been studied in \cite{ansini_shear-thinning_2002,lienstromberg_local_2020} in the context of self-similar solutions and local strong solutions, respectively.

The main difference in the classification of \eqref{eq:PDE_power-law} and \eqref{eq:PDE_Ellis} is that \eqref{eq:PDE_power-law} is doubly degenerate in the sense that we loose parabolicity if either the unknown $u$ or its third spatial derivative $u_{xxx}$ become zero. In contrast, \eqref{eq:PDE_Ellis} is degenerate only in the unknown $u$ itself. 

For $\alpha=1$ we recover in both equations \eqref{eq:PDE_power-law} and \eqref{eq:PDE_Ellis} the well-known Newtonian thin-film equation
\begin{equation} \label{eq:PDE_Newtonian}
	u_t + \bigl(u^3 u_{xxx}\bigr)_x = 0, 
	\quad
	t > 0,\ x \in \Omega.
\end{equation}
This equation is studied extensively in the mathematical literature. For results concerning existence, uniqueness and stability of weak solutions to \eqref{eq:PDE_Newtonian} we refer the reader for instance to the works \cite{bernis_higher_1990,beretta_nonnegative_1995,bertozzi_lubrication_1996}.

%-------------------------------------
%-------------------------------------
\bigskip

\subsection{Main results of the paper -- Stability of steady states and long-time behaviour of positive weak solutions} 
In the present paper we study the behaviour of positive weak solutions to \eqref{eq:PDE_power-law} and \eqref{eq:PDE_Ellis}, respectively, for large times. Note that we consider only the case of strictly positive initial values $u_0 > 0$ since these allow us to find a positive time up to which solutions remain strictly positive. 

The main results of the paper are the following: We prove local existence of positive weak solutions to the power-law thin-film equation \eqref{eq:PDE_power-law} for all flow-behaviour exponents $\alpha > 0$, see Theorem \ref{thm:Local_Ex_PL} below. In the case $\alpha > 1$ of shear-thinning power-law fluids, even global existence of non-negative weak solutions has been established in \cite{ansini_doubly_2004}, using a two-step regularisation scheme, Galerkin approximation and energy/entropy methods. Since the present paper is concerned with stability of positive steady states, we are only interested in positive weak solutions. Therefore, we use a simpler regularisation method that allows us (only) to construct local positive weak solutions, but for all flow-behaviour exponents $\alpha > 0$. These solutions can then be extended to global weak solutions as long as they are close to steady states.

Moreover, again for all $\alpha > 0$, we can characterise positive steady states of the power-law thin-film equation by positive constants, cf. Theorem \ref{thm:char_steady_states} below. As already mentioned, the long-time behaviour of solutions that are initially close to a steady state $\bar{u}_0 = \fint_\Omega u_0\, dx$ depends strongly on the choice of the flow-behaviour exponent $\alpha$, i.e., on the shear-thinning, respectively shear-thickening nature of the fluid. The main result concerning global existence and stability properties of steady states is the following:
\begin{theorem*}
Fix $\alpha > 0$. Then there exists an $\eps > 0$ such that, for all positive initial values $u_0 \in H^1(\Omega)$ with $\|u_0 - \bar{u}_0\|_{H^1(\Omega)} \leq \eps$, problem \eqref{eq:PDE_power-law} possesses at least one global positive weak solution 
\begin{equation*}
    u \in C\bigl([0,\infty);H^1(\Omega)\bigr) \cap L_{\alpha+1,\text{loc}}\bigl((0,\infty);W^3_{\alpha+1,B}(\Omega)\bigr)
    \quad \text{with} \quad
    u_t \in L_{\frac{\alpha+1}{\alpha},\text{loc}}\bigl((0,\infty);(W^1_{\alpha+1,B}(\Omega))^\prime\bigr),
\end{equation*}
satisfying the boundary condition $u_x=0$ on $\partial\Omega$ pointwise for almost every $t \geq 0$.
Moreover, this global solution has the following asymptotic behaviour:
\begin{itemize}
    \item[(i)] In the shear-thickening case $0 < \alpha < 1$, there exists a positive but finite time $0 < t^\ast < \infty$ such that
    \begin{equation*}
        u(t,\cdot) \longrightarrow \bar{u}_0 \text{ in } H^1(\Omega), \text{ as } t \to t^\ast,
        \quad \text{and} \quad
        u(t,x) = \bar{u}_0, 
        \quad
        t \geq t^\ast,\, x \in \Omega.
    \end{equation*}
    \item[(ii)] In the shear-thinning case $1 < \alpha < \infty$, there exists a constant $C > 0$ such that
    \begin{equation*}
        \|u(t) - \bar{u}_0\|_{H^1(\Omega)} \leq
        \frac{C \eps}{\bigl(1 + C \eps^{\alpha-1} t\bigr)^\frac{1}{\alpha-1}},
        \quad
        0 \leq t < \infty.
    \end{equation*}
    \item[(iii)] In the Newtonian case $\alpha=1$, there exist positive constants $C, \gamma > 0$ such that
    \begin{equation*}
        \|u(t) - \bar{u}_0\|_{H^1(\Omega)} \leq C e^{-\gamma t},
        \quad
        0 \leq t < \infty.
    \end{equation*}
\end{itemize}
\end{theorem*}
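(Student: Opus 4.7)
The plan is to extend the local positive weak solutions furnished by Theorem \ref{thm:Local_Ex_PL} to global ones via a single energy identity combined with a smallness--continuation argument, and to read off all three asymptotic regimes from one ordinary differential inequality. Testing the PDE in \eqref{eq:PDE_power-law} formally against $-u_{xx}$ and using the boundary conditions $u_x=u_{xxx}=0$ produces
$$
\frac{d}{dt}\frac{1}{2}\|u_x(t)\|_{L^2(\Omega)}^2 \;=\; -\int_\Omega u^{\alpha+2}|u_{xxx}|^{\alpha+1}\,dx,
$$
so $t\mapsto\|u_x(t)\|_{L^2}$ is non-increasing on the existence interval. Combined with the conserved spatial mean $\bar u_0$ and Poincar\'e--Wirtinger this yields $\|u(t)-\bar u_0\|_{H^1(\Omega)}\leq C\eps$, whence the one-dimensional embedding $H^1\hookrightarrow L^\infty$ gives the uniform lower bound $u(t,x)\geq \bar u_0-C'\eps\geq \bar u_0/2$. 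This positivity triggers the continuation criterion built into Theorem \ref{thm:Local_Ex_PL} and promotes the local solution to a global one in the announced spaces.

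I would then turn the dissipation into a closed ordinary differential inequality for $y(t):=\|u_x(t)\|_{L^2}^2$. The required spatial estimate is purely elliptic: two applications of the one-dimensional Poincar\'e inequality in $L^{\alpha+1}(\Omega)$, using $u_x|_{\partial\Omega}=0$ and $\int_\Omega u_{xx}\,dx=[u_x]_{\partial\Omega}=0$, give $\|u_x\|_{L^{\alpha+1}}+\|u_{xx}\|_{L^{\alpha+1}}\leq C\|u_{xxx}\|_{L^{\alpha+1}}$; the fundamental theorem of calculus together with the $L^1$--$L^{\alpha+1}$ embedding also yields $\|u_x\|_{L^\infty}\leq\|u_{xx}\|_{L^1}\leq C\|u_{xxx}\|_{L^{\alpha+1}}$. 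Inserting both into the H\"older split
$$
\|u_x\|_{L^2}^2 \;=\; \int_\Omega|u_x|^{1-\alpha}\,|u_x|^{\alpha+1}\,dx \;\leq\; \|u_x\|_{L^\infty}^{1-\alpha}\,\|u_x\|_{L^{\alpha+1}}^{\alpha+1} \qquad (0<\alpha<1),
$$
or directly into the embedding $L^{\alpha+1}(\Omega)\hookrightarrow L^2(\Omega)$ when $\alpha\geq 1$, produces the bound $\|u_x\|_{L^2}\leq C\|u_{xxx}\|_{L^{\alpha+1}}$ uniformly in $\alpha>0$. Together with $u\geq\bar u_0/2$ the energy identity upgrades to
$$
y'(t)\;\leq\;-c\,y(t)^{(\alpha+1)/2},\qquad c=c(\alpha,\bar u_0,|\Omega|)>0.
$$

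The three conclusions now fall out by separating variables in this single ODI. For $\alpha=1$ it reads $y'\leq -cy$, so $y(t)\leq y_0 e^{-ct}$ and Poincar\'e--Wirtinger upgrades this to the $H^1$-rate of (iii). For $\alpha>1$ integration gives $y(t)\leq\bigl(y_0^{-(\alpha-1)/2}+c(\alpha-1)t/2\bigr)^{-2/(\alpha-1)}$; inserting $y_0\leq C\eps^2$ yields precisely the rate $\eps/(1+C\eps^{\alpha-1}t)^{1/(\alpha-1)}$ of (ii). For $0<\alpha<1$ the exponent is subcritical and
$$
y(t)^{(1-\alpha)/2}\;\leq\; y_0^{(1-\alpha)/2}-c\,\tfrac{1-\alpha}{2}\,t
$$
forces $y$ to reach $0$ at a finite time $t^\ast\leq C\eps^{1-\alpha}$. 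Since $y(t^\ast)=0$ means $u_x(t^\ast,\cdot)\equiv 0$, mass conservation gives $u(t^\ast,\cdot)\equiv\bar u_0$; extending by the stationary constant for $t\geq t^\ast$ produces a global weak solution satisfying (i), the $H^1$-convergence as $t\uparrow t^\ast$ being supplied by the continuity statement $u\in C([0,\infty);H^1(\Omega))$.

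The main obstacle is not the ODE analysis but the rigorous implementation of the energy identity at the weak-solution level: because \eqref{eq:PDE_power-law} is doubly degenerate and, for $0<\alpha<1$, the principal part is only H\"older continuous in $u$ and $u_{xxx}$, all of the above has to be carried out on the approximation scheme of Theorem \ref{thm:Local_Ex_PL}, the $H^1$-smallness and the induced $L^\infty$-lower bound on $u$ propagated uniformly, and the nonlinear flux $u^{\alpha+2}|u_{xxx}|^{\alpha-1}u_{xxx}$ identified in the limit---typically by monotonicity of the $p$-Laplace-type principal part (Minty's trick) together with strong compactness of $u_{xxx}$ in $L^{\alpha+1}_{t,x}$. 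Once this passage to the limit is settled, everything sketched above reduces to elementary interpolation and an ODE computation.
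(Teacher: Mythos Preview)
Your proposal is correct and follows essentially the same strategy as the paper: local existence from Theorem~\ref{thm:Local_Ex_PL}, the energy-dissipation identity, the smallness--continuation argument based on $H^1\hookrightarrow L^\infty$ and conservation of mass to keep $u\geq\bar u_0/2$, the key elliptic bound $\|u_x\|_{L^2}\leq C\|u_{xxx}\|_{L^{\alpha+1}}$, and then the ODI $y'\leq -c\,y^{(\alpha+1)/2}$ from which all three regimes are read off exactly as you do.

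The one genuine difference is your proof of the inequality $\|u_x\|_{L^2}\leq C\|u_{xxx}\|_{L^{\alpha+1}}$ in the range $0<\alpha<1$. The paper (Lemma~\ref{Lem:Poincare}) argues by duality: it views $v$ as a weak solution of $v_{xxxx}=w_x$ with $w=v_{xxx}\in L_{\alpha+1}$, tests against $v$ itself, and uses the Sobolev embedding to close the estimate $\|v_{xx}\|_{L^2}\leq C\|v_{xxx}\|_{L^{\alpha+1}}$. Your route---the H\"older split $\|u_x\|_{L^2}^2\leq\|u_x\|_{L^\infty}^{1-\alpha}\|u_x\|_{L^{\alpha+1}}^{\alpha+1}$ combined with $\|u_x\|_{L^\infty}\leq\|u_{xx}\|_{L^1}$ and two Poincar\'e inequalities---is more elementary and avoids the Bi-Laplace detour, exploiting the one-dimensional setting more directly. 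Both arguments yield the same constant-free inequality; the paper's version is perhaps more robust if one were to attempt higher-dimensional analogues, while yours is cleaner here. One small imprecision: in your final paragraph you list ``strong compactness of $u_{xxx}$ in $L^{\alpha+1}_{t,x}$'' alongside Minty's trick, but the paper obtains only \emph{weak} convergence of $u^\sigma_{xxx}$ and relies entirely on monotonicity to identify the flux---strong compactness of the third derivative is neither available nor needed.
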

Note that statement (iii) of this theorem is already well-known \cite{beretta_nonnegative_1995,bertozzi_lubrication_1996} and can even be proved in `better' function spaces with standard theory, see for instance the text books \cite{haragus_local_2011,lunardi_analytic_2012}. Moreover, in the shear-thinning case (ii), convergence to steady states has already been proved in \cite{ansini_doubly_2004} but without rate of convergence.
In the cylindrical Taylor--Couette setting, statement (iii) has first been shown in \cite{pernas_castano_analysis_2020} in the framework of stable center manifolds. Similarly, the results in (i) and (ii) have been obtained in \cite{lienstromberg_analysis_2022} and \cite{lienstromberg_long-time_2022}, also in the cylindrical Taylor--Couette geometry.

Finally, we prove global existence of positive weak solutions to the Ellis-law thin-film equation \eqref{eq:PDE_Ellis} and provide a description of their asymptotic behaviour. For $\alpha \geq 2$, stability and exponential decay to equilibrium can again be obtained by standard techniques \cite{lunardi_analytic_2012,haragus_local_2011}. However, for $1 < \alpha < 2$, these techniques are not applicable since the coefficients of the differential operator are merely Hölder continuous. For this range of flow-behaviour exponents we use energy methods to prove exponential asymptotic stability of steady states in $H^1(\Omega)$.

%--------------------------------------------------
%--------------------------------------------------
\bigskip

\subsection{Shear-Dependent non-Newtonian Fluids} 
Many common liquids and gases, such as water and air, may reasonably be considered Newtonian. However, there is still a multitude of real fluids which are in fact non-Newtonian. Newtonian fluids are characterised by a perfectly linear dependence of the shear stress $\sigma(\epsilon)$ on the local strain rate $\epsilon$, the constant fluid viscosity $\mu > 0$ being the factor of proportionality. In contrast to that, shear-dependent non-Newtonian fluids feature a non-linear relation between the shear-rate and the viscous stress, $\sigma(\epsilon) = \mu(|\epsilon|)\epsilon$, where $\mu(|\epsilon|)$ is the shear-dependent viscosity. That is, these fluids become more solid or more liquid under shear force. In the case in which the fluid viscosity increases with increasing shear rate, the corresponding fluids are called \textbf{shear-thickening}. On the contrary, fluids are called \textbf{shear-thinning} if their viscosity decreases with increasing shear-rate. In this paper, we are concerned with two classes of non-Newtonian fluids, so-called \textbf{power-law fluids} or \textbf{Ostwald--de Waele fluids} and \textbf{Ellis-fluids}. 

%--------------------------------------------------
%--------------------------------------------------
\medskip

\noindent\textbf{\textsc{Power-Law Fluids. }} For \textbf{power-law fluids} or \textbf{Ostwald--de Waele fluids} the constitutive law for the effective fluid viscosity reads
\begin{equation}\label{eq:mu_power-law}
	\mu(|\epsilon|) = \mu_0 |\epsilon|^{\frac{1}{\alpha}-1},
\end{equation}
with a characteristic viscosity $\mu_0 > 0$ and a flow-behaviour exponent $\alpha > 0$. For these fluids, the relation between the local strain and the viscous stress is 
\begin{equation*}
	\sigma(\epsilon) = \mu_0 |\epsilon|^{\frac{1}{\alpha}-1} \epsilon.
\end{equation*}
Note that the corresponding fluid is shear-thickening for flow-behaviour exponents $0 < \alpha < 1$, while it is shear-thinning for $\alpha > 1$. In the case $\alpha=1$, we recover the Newtonian regime $\mu(|\epsilon|)\equiv \mu_0 > 0$ of a constant viscosity. 

However, it is observed in real-world applications (e.g. in polymeric systems) that, at `intermediate' shear rates, fluids behave according to \eqref{eq:mu_power-law}, while the at rather low and/or rather high shear rates, the viscosity approaches a Newtonian plateau. This is obviously not reflected by \eqref{eq:mu_power-law}. 

%--------------------------------------------------
%--------------------------------------------------
\medskip

\noindent\textbf{\textsc{Ellis fluids. }} 
As a second class of shear-dependent non-Newtonian fluids we consider fluids the rheology of which is described by the so-called \textbf{Ellis constitutive law} \cite{weidner_contactline_1994}
\begin{equation}\label{eq:mu_Ellis}
	\frac{1}{\mu(|\epsilon|)}
	=
	\frac{1}{\mu_0} \left(1 + \left|\frac{\sigma(\epsilon)}{\sigma_{1/2}}\right|^{\alpha-1}\right),
	\quad
	\alpha \geq 1,\
	0 < \sigma_{1/2} < \infty,
\end{equation}
where $\sigma(\epsilon) = \mu(|\epsilon|)\epsilon$ is the viscous shear stress.
Here, $\mu_0 > 0$ denotes the viscosity at zero shear stress and $\sigma_{1/2} > 0$ is the viscous shear stress at which the viscosity is reduced to $\mu_0/2$. Thus, for $\alpha > 1$ and $0 < \sigma_{1/2} < \infty$ the Ellis constitutive law describes a shear-thinning behaviour, i.e., the fluid viscosity decreases with increasing shear rate. For $\alpha=1$ or for $\sigma_{1/2}^{\alpha-1} \to \infty$, we recover a Newtonian behaviour. As an advantage over \eqref{eq:mu_power-law}, the Ellis law \eqref{eq:mu_Ellis} has the ability to describe a shear-thinning behaviour for `moderate' shear rates and a Newtonian plateau for rather low shear stresses, since for all $\sigma \in \R$,
\begin{equation*}
	\frac{1}{\mu(|\epsilon|)}
	=
	\frac{1}{\mu_0} \left(1 + \left|\frac{\sigma(\epsilon)}{\sigma_{1/2}}\right|^{\alpha-1}\right)
	\longrightarrow
	\frac{1}{\mu_0},
	\quad
	\text{as}
	\quad
	\sigma_{1/2}^{\alpha-1} \to \infty.
\end{equation*}
For the majority of polymers and polymer solutions the flow-behaviour exponent $\alpha$ in \eqref{eq:mu_Ellis} varies in a range between 1 and 2, see e.g. \cite{bird_dynamics_1987,matsuhisa_analytical_1965}.

A plot of the different constitutive laws for the fluid viscosity (Newtonian fluids, shear-thickening and shear-thinning power-law fluids and Ellis fluids) is offered in Figure \ref{fig:constitutive_laws}.
\begin{center}
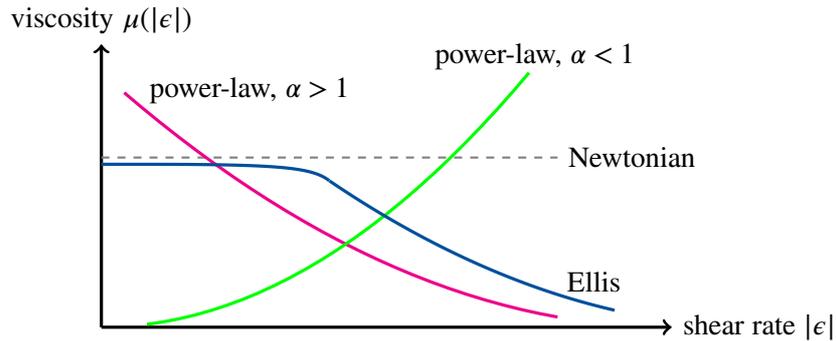
\begin{figure}[h!] 
\begin{tikzpicture}[yscale=1.5, xscale=1.5]
\draw[->, very thick] (0,0)--(5,0) node[right] {shear rate $|\epsilon|$};
\draw[->, very thick] (0,0)-- (0,2.5) node[above] {viscosity $\mu(|\epsilon|)$};
\draw[-, dashed, gray, thick] (0,1.5)--(4,1.5) node[right]{\color{black}{Newtonian}};
\draw[domain=0.2:4,variable=\x,magenta, very thick] plot ({\x},{(1.5-0.3*abs(\x))^2 }); % power-law alpha > 1
\draw[domain=0.4:3.75,variable=\x,green, very thick] plot ({\x},{(0.4*abs(\x))^2}); % power-law alpha < 1
\draw[domain=0:2.0,variable=\x,luh-dark-blue, very thick, samples=200] plot ({\x},{ (1.5/pi)*(pi/2 - rad(atan(-8+(\x-0.2)^3))) });
\draw[domain=2.0:4.5,variable=\x,luh-dark-blue, very thick, samples=200] plot ({\x},{ (sqrt((1.5/pi)*(pi/2 - rad(atan(-8+1.8^3))))-0.3*abs(\x-2.0))^2 });
\node at (3.8,2.4) {power-law, $\alpha < 1$};
\node at (1.3,2.1) {power-law, $\alpha > 1$};
\node at (4.32,0.4) {Ellis};
\end{tikzpicture}
\caption{Constitutive viscosity laws: Newtonian fluid (dashed), shear-thinning power-law fluid (pink), shear-thickening power-law fluid (green) and Ellis-fluid (blue).}
\label{fig:constitutive_laws}
\end{figure}
\end{center}

%-------------------------------------
%-------------------------------------
\medskip

\subsection{Outline of the paper} The structure of the paper is as follows: In Section \ref{sec:Functional_Setting} we introduce the functional setting we will work in. 

In Section \ref{sec:Local_Existence} we prove local existence of positive weak solutions to the power-law thin-film equation and characterise positive steady states by positive constants.

In Section \ref{sec:regularity-estimates} we derive regularity estimates for weak solutions that are valid as long as the solution stays bounded away from zero. More precisely, we prove a {\L}ojasiewicz-Simon-type inequality that estimates the dissipation functional in terms of powers of the energy functional. Moreover, we provide a local $L_1$-in-time estimate for the dissipation functional in terms of the energy at a slightly earlier time.

Section \ref{sec:shear-thickening} is concerned with the dynamic behaviour of solutions to the shear-thickening power-law problem. First, we prove global existence of positive weak solutions for initial film heights that are initially close to a constant in $H^1(\Omega)$. Moreover, we show that these solutions converge to a positive constant in finite time and stay constant for all later times.

Section \ref{sec:shear-thinning} is concerned with the stability properties of solutions to the shear-thinning power-law thin-film equation. As in the shear-thickening case, it is shown that weak solutions exist globally time and stay positive if they are initially close to a steady state. Moreover, these positive global weak solutions are polynomially stable in $H^1(\Omega)$ in the sense that they converge to a steady state (positive constant) at rate $1/t^{1/(\alpha-1)}$, as time tends to infinity.

In Section \ref{sec:Ellis} we study the non-Newtonian thin-film equation that arises when the constitutive law for the fluid viscosity is the Ellis-law. The corresponding Ellis fluids have a Newtonian plateau for small shear rates and behave like a shear-thinning power-law fluid for high shear rates. We observe exponential asymptotic stability of steady states in the $H^1(\Omega)$-norm.

%-----------------------------------------------------
%-----------------------------------------------------
\bigskip

\section{Functional Framework}
\label{sec:Functional_Setting}

In this section we provide the functional setting that will be needed for the study of both the power-law \eqref{eq:PDE_power-law} and Ellis-law \eqref{eq:PDE_Ellis} thin-film equations. 
%We now introduce the functional setting we are working in. 

Throughout this paper, we assume that $\Omega \subset \R$ is a bounded interval. 
For $k \in \N$ and $p \in [1,\infty)$ we denote by $W^k_p(\Omega)$ the usual Sobolev 
spaces with norm
\begin{equation*}
\|v\|_{W^k_p(\Omega)} = \left(\sum_{j=0}^k 
\|\partial^{j} v\|_{L_p(\Omega)}^p\right)^{1/p}.
\end{equation*}
We then define the seminorm
\begin{equation*}
[v]_{W^s_p(\Omega)}
=
\int_{\Omega} \int_{\Omega} \frac{|v(x) - v(z)|^p}{|x-z|^{1+sp}}\, dx\, dz,
\quad
1 \leq p < \infty,\ 0 < s < 1,
\end{equation*}
and introduce the \textbf{fractional Sobolev spaces} by
\begin{equation*}
    W^s_p(\Omega) =
    \left\{v \in W^{[s]}_p(\Omega); \|v\|_{W^s_p(\Omega)} < \infty\right\},
    \quad
    1\leq p < \infty,\ s \in \R_+\setminus\N,
\end{equation*}
where
\begin{equation*}
\|v\|_{W^s_p(\Omega)} = \left(\|v\|_{W^{[s]}_p(\Omega)}^p + [\partial^{[s]}v]_{W^{s-[s]}_p(\Omega)}^p\right)^{1/p},
\quad
1\leq p < \infty,\ s \in \R_+\setminus\N,
\end{equation*}
with $[s]$ denoting the largest integer such that $[s] \leq s$.

We now recall some important properties of these spaces. It is well-known (see, for instance, \cite{triebel_interpolation_1978}) that, 
for $0 \leq s_0 < s_1 < \infty$, $1<p<\infty$, and $0 < \rho < 1$, the space $W^s_p(\Omega)$ with 
$s = (1 - \rho) s_0 + \rho s_1$, is the complex interpolation space between $W^{s_1}_p(\Omega)$ and 
$W^{s_0}_p(\Omega)$, in symbols
\begin{equation*}
W^s_p(\Omega) = [W^{s_0}_p(\Omega),W^{s_1}_p(\Omega)]_{\rho}.
\end{equation*}
In order to take the Neumann-type boundary conditions into account, we further introduce the Banach spaces
\begin{equation*}
W^{4\rho}_{p,B}(\Omega)
=
\begin{cases}
\bigl\{v \in W^{4\rho}_p(\Omega); 
v_x = v_{xxx} = 0 \text{ on } \partial\Omega\bigr\}, 
& 3 + \frac{1}{p} < 4\rho \leq 4,
\\[1ex]
\bigl\{v \in W^{4\rho}_p(\Omega); 
v_x = 0 \text{ on } \partial\Omega\bigr\}, 
& 1+ \frac{1}{p} < 4\rho \leq 3 + \frac{1}{p},
\\[1ex] 
W^{4\rho}_p(\Omega), & 0 \leq 4\rho \leq 1+ \frac{1}{p}.
\end{cases}
\end{equation*}
For $4\rho \in (0,4)\setminus\{1+1/p,3+1/p\}$, the spaces $W^{4\rho}_{p,B}(\Omega)$ are closed
linear subspaces of $W^{4\rho}_{p}(\Omega)$ and satisfy the interpolation property \cite[Theorem 4.3.3]{triebel_interpolation_1978}
\begin{equation}
W^{4\rho}_{p,B}(\Omega) = \bigl(L_p(\Omega),W^4_{p,B}(\Omega)\bigr)_{\rho,p},
\quad
1 < p < \infty. 
\end{equation}
Lastly, we use $W^1_{p,0}(\Omega)$ to denote the space of 
functions belonging to $W^1_{p}(\Omega)$ with zero 
boundary condition.
%-------------------------------------
%-------------------------------------
\bigskip

% Consider the problem
% \begin{equation}\label{eq:QP}
% \begin{cases}
% u_t + \Acal(u) u
% =
% \Fcal(u),
% &
% t > 0,
% \\[1ex]
% u(0)
% =
% u_0,
% \quad
% &
% \end{cases}
% \end{equation}

% where the quasilinear differential operator $\Acal$ is defined as follows. Let $4\sigma > 7/2$ and  $\alpha \in (1,2)$. For $v \in \Ocal_\kappa := \left\{v \in H^{4\sigma}_B(\Omega);\, v(x) > \kappa\, \forall x \in \bar{\Omega}\right\} \subset H^{4\sigma}_B(\Omega)$ we set
% \begin{equation*}
% 	\Acal(v) \in \Lcal\left(H^4_B(\Omega);L_2(\Omega)\right),
% 	\quad
% 	\Acal(v) u := a(v,v_x,v_{xx},v_{xxx}) \partial_x^4 u
% \end{equation*}
% with
% \begin{equation*}
% 	a : C^{\alpha-1}\left(\R^4;\R_+\right)
% 	\quad
% 	\text{ such that }
% 	\quad
% 	a(v,v_x,v_{xx},v_{xxx}) \geq \lambda_\kappa \quad \forall v \in \Ocal_\kappa
% \end{equation*}
% for some ellipticity constant $\lambda > 0$. It follows that
% \begin{equation*}
% 	\Acal \in C^{\alpha-1}\left(H^{4\sigma}_B(\Omega);\Hcal\left(H^4_B(\Omega);L_2(\Omega)\right)\right).
% \end{equation*}
% Moreover, we assume that the right-hand side satisfies
% \begin{equation*}
% 	\Fcal \in C^\alpha\left(H^{4\sigma}_B(\Omega);L_2(\Omega)\right).
% \end{equation*}

% Let $u_\ast \in H^4_B(\Omega)$ such that $u_\ast > \kappa$ be a stationary solution of \eqref{eq:QP}, i.e.
% \begin{equation*}
% 	\Acal(u_\ast) u_\ast = \Fcal(u_\ast).
% \end{equation*}

%-------------------------------------
%-------------------------------------
%-------------------------------------
%-------------------------------------

\section{Local Existence for the Power-Law Thin-Film Equation}
\label{sec:Local_Existence}

In this section we prove local existence of positive weak solutions to the evolution problem
\begin{equation} \label{eq:PDE}
    \begin{cases}
        u_t + \bigl(u^{\alpha+2} |u_{xxx}|^{\alpha-1} u_{xxx}\bigr)_x
        =
        0,
        & t>0,\ x \in \Omega,
        \\
        u_x(t,x) = u_{xxx}(t,x) = 0, 
        &
        t > 0,\ x \in \partial\Omega,
        \\
        u(0,x) = u_0(x),
        &
        x \in \Omega,
    \end{cases}
\end{equation}
for flow-behaviour exponents $\alpha >0$, i.e., for both shear-thinning $(\alpha > 1)$ and shear-thickening ($\alpha < 1$) power-law fluids. Moreover, we characterise the positive steady states of \eqref{eq:PDE} by positive constants (flat films of positive height). 

Our analysis strongly relies on an energy-dissipation estimate for the \textbf{energy functional} 
\begin{equation*}
    E[u] = \frac{1}{2} \int_\Omega |u_x|^2\,dx.
\end{equation*}
Formally testing the equation with the second derivative $u_{xx}$, one finds that $E[u](t)$ decreases along solutions to \eqref{eq:PDE}. More precisely, solutions $u$ to \eqref{eq:PDE} satisfy
\begin{equation*}
    \frac{d}{dt} E[u](t) = -D[u](t) = -\int_\Omega u^{\alpha+2} |u_{xxx}|^{\alpha+1}\, dx. 
\end{equation*}
We call $D[\cdot]$ the \textbf{dissipation functional}.

For the purpose of local existence, we introduce in Section \ref{ssec:Existence_PL_reg} a regularised version of \eqref{eq:PDE} that removes the degeneracy in the third derivative $u_{xxx}$. For the regularised problem we apply standard parabolic theory in order to prove existence of positive strong solutions, emanating from positive initial values.
In Section \ref{ssec:Existence_PL} we provide uniform a-priori bounds for the solutions to the regularised problem and pass to the limit of a vanishing regularisation parameter in order to obtain local existence of positive weak solutions to the original problem \eqref{eq:PDE}. 

Note that for $\alpha > 1$ (shear-thinning fluids) existence of global non-negative weak solutions is already proved in \cite{ansini_doubly_2004}, where the authors use a more involved regularisation scheme. However, in the present paper we are only interested in positive solutions, but for all flow-behaviour exponents $\alpha > 0$.

%-------------------------------------
%-------------------------------------

In order to simplify notation, we introduce, for a fixed $\alpha > 0$, the function
\begin{equation*}
    \psi\colon \R \to \R,
    \quad
    s \mapsto \psi(s) = |s|^{\alpha-1} s,
\end{equation*}
and rewrite the partial differential equation $\eqref{eq:PDE}_1$ as
\begin{equation*}
    u_t + \bigl(u^{\alpha+2} \psi(u_{xxx})\bigr)_x = 0,
    \quad
    t > 0,\ x \in \Omega.
\end{equation*}
Note that if $\alpha \geq 1$, then $\psi \in C^1(\R)$ with $\psi^\prime(s) = \alpha |s|^{\alpha-1}$. For $\alpha < 1$ the function $\psi$ is only $\alpha$-H\"older-continuous.

%-------------------------------------
%-------------------------------------

\begin{definition}\label{def:weak_sol}
For a given $T > 0$ and initial value $u_0 \in H^1(\Omega)$, a weak solution to \eqref{eq:PDE} is defined as a function
\begin{equation*}
    u \in C\bigl([0,T];H^1(\Omega)\bigr) 
    \cap
    L_{\alpha+1}\bigl((0,T);W^3_{\alpha+1,B}(\Omega)\bigr)
    \quad
    \text{with}
    \quad
    u_t \in L_\frac{\alpha+1}{\alpha}\bigl((0,T);(W^1_{\alpha+1,B}(\Omega))'\bigr)
\end{equation*} 
that has the following properties:
\begin{itemize}
    \item[(i)] (Weak formulation) $u$ satisfies the differential equation  $\eqref{eq:PDE}_1$ in the weak sense, i.e., 
    \begin{equation*}
        \int_0^T \langle u_t,\phi\rangle_{W^1_{\alpha+1}(\Omega)}\, dt
        =
        \int_0^T \int_\Omega u^{\alpha+2} \psi(u_{xxx})\, \phi_x\, dx\, dt
    \end{equation*}
    for all test functions $\phi \in L_{\alpha+1}\bigl((0,T);W^1_{\alpha+1,B}(\Omega)\bigr)$.
    \item[(ii)] (Initial and boundary values) $u$ satisfies the contact angle condition \(u_x=0\) on \(\partial\Omega\) and the initial condition $\eqref{eq:PDE}_3$ pointwise.
\end{itemize}
\end{definition}

%-------------------------------------
%-------------------------------------

The following theorem contains the main result of this section.

%-------------------------------------
%-------------------------------------

\begin{theorem}[Local existence of positive weak solutions] \label{thm:Local_Ex_PL}
Given a positive initial value $u_0 \in W^{4\rho}_{\alpha+1,B}(\Omega),\ 4\rho > 3+1/(\alpha+1)$, with
$u_0(x) > 0,\ x \in \bar{\Omega}$,
% \begin{equation*}
%     \frac{1}{|\Omega|} \int_\Omega u_0(x)\, dx = \bar{u}_0
%     \quad \text{and} \quad
%     \|u_0 - \bar{u}_0\|_{H^1(\Omega)} \leq \eps
% \end{equation*}
there exists a time $T > 0$ such that problem \eqref{eq:PDE} admits at least one positive weak solution \begin{equation*}
    u \in C\bigl([0,T];H^1(\Omega)\bigr) 
    \cap
    L_{\alpha+1}\bigl((0,T);W^3_{\alpha+1,B}(\Omega)\bigr)
    \quad \text{with} \quad
    u_t \in L_\frac{\alpha+1}{\alpha}\bigl((0,T);(W^1_{\alpha+1,B}(\Omega))'\bigr)
\end{equation*}
on $(0,T)$ in the sense of Definition \ref{def:weak_sol}. Moreover, such a solution has the following properties:
\begin{itemize}
    \item[(i)] (Positivity) $u$ is bounded away from zero
    \begin{equation*}
        0
        < C_T \leq
        u(t,x),
        \quad
        0 \leq t \leq T,\ x \in \bar{\Omega}.
    \end{equation*}
    \item[(ii)] (Conservation of mass) $u$ conserves its mass in the sense that
        \begin{equation*}
            \|u(t)\|_{L_1(\Omega)} 
            =
            \|u_0\|_{L_1(\Omega)},
            \quad 0 \leq t \leq T.
        \end{equation*}
    \item[(iii)] (Energy-dissipation identity) Energy is dissipated along solutions
        \begin{equation}\label{eq:energy-dissipation}
            E[u](t) + \int_0^t D[u](s) \, ds= E[u_0]
        \end{equation}
    for almost every $t\in [0,T]$. 
\end{itemize}
\end{theorem}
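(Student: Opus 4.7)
The plan is to implement the regularisation strategy announced in Sections~\ref{ssec:Existence_PL_reg} and~\ref{ssec:Existence_PL}: smooth out the degeneracy in $u_{xxx}$ to access non-degenerate quasilinear parabolic theory, control the regularised solutions uniformly via the energy--dissipation estimate, and then pass to the limit by compactness. Since we start from a strictly positive initial datum $u_0 > 0$ on $\bar\Omega$, the lower bound $u \geq m > 0$ will survive on a short time interval by continuity, so only the degeneracy carried by $u_{xxx}$ has to be regularised; the degeneracy in $u$ itself never activates on $[0,T]$.

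Concretely, I would fix $\eps \in (0,1)$ and replace $\psi(s) = |s|^{\alpha-1}s$ by $\psi_\eps(s) = (s^2 + \eps)^{(\alpha-1)/2}s$, which is smooth, strictly increasing and satisfies $\psi_\eps'(s) \geq c(\eps) > 0$. The resulting regularised problem is a uniformly parabolic fourth-order quasilinear equation whose coefficients depend smoothly on $u$ and $u_{xxx}$. Since we chose $4\rho > 3 + 1/(\alpha+1)$, we have $W^{4\rho}_{\alpha+1,B}(\Omega) \hookrightarrow C^3(\bar\Omega)$, so Amann's maximal-regularity framework (see e.g.\ \cite{lunardi_analytic_2012}) yields a unique strong solution $u_\eps$ on a maximal interval $[0,T_\eps)$. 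Continuity in time in $H^1(\Omega) \hookrightarrow C^{1/2}(\bar\Omega)$ together with the strict positivity of $u_0$ then produces a common time $T_* > 0$ on which $u_\eps \geq m/2$ holds uniformly in $\eps$.

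Testing the regularised equation with $-u_{\eps,xx}$, integrating by parts, and using the boundary conditions $u_{\eps,x}=u_{\eps,xxx}=0$ on $\partial\Omega$ produces the regularised energy identity
\begin{equation*}
    \frac{d}{dt} E[u_\eps](t) + \int_\Omega u_\eps^{\alpha+2}\, (u_{\eps,xxx}^2+\eps)^{(\alpha-1)/2}\, u_{\eps,xxx}^2\, dx = 0.
\end{equation*}
Combined with mass conservation and the uniform lower bound $u_\eps \geq m/2$, this yields an $\eps$-uniform bound for $u_{\eps,xxx}$ in $L_{\alpha+1}((0,T_*)\times\Omega)$, hence for $u_\eps$ in $L_{\alpha+1}((0,T_*); W^3_{\alpha+1,B}(\Omega))$, and via the weak formulation a uniform bound for $\partial_t u_\eps$ in $L_{(\alpha+1)/\alpha}((0,T_*); (W^1_{\alpha+1,B}(\Omega))')$.

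The main obstacle is identifying the nonlinear flux in the limit $\eps \to 0$. An Aubin--Lions argument supplies a subsequence converging strongly in $C([0,T_*]; H^1(\Omega))$---so $u_\eps^{\alpha+2} \to u^{\alpha+2}$ uniformly---together with weak convergence $u_{\eps,xxx} \toweak u_{xxx}$ in $L_{\alpha+1}$. In the shear-thickening range $\alpha < 1$, however, $\psi_\eps$ is only H\"older, so weak convergence alone does not identify $\psi_\eps(u_{\eps,xxx})$ with $\psi(u_{xxx})$ in the limit. I would close this gap by a Minty-type monotonicity argument based on $(\psi_\eps(a)-\psi_\eps(b))(a-b) \geq 0$: testing the regularised equation against $u_{\eps,xxx}-v_{xxx}$ for arbitrary admissible $v$ and passing to the limit identifies the weak limit of the flux as $u^{\alpha+2}\psi(u_{xxx})$. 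Properties (i)--(iii) of the theorem then follow directly: positivity from the uniform lower bound preserved under strong $H^1$-convergence, mass conservation by integrating the limit equation in space, and the energy--dissipation identity by combining Fatou's lemma applied to the regularised identity with a matching reverse bound obtained by testing the limit equation with $-u_{xx}$.
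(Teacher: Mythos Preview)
Your overall strategy---regularise $\psi$, obtain strong solutions to the regularised problem via quasilinear parabolic theory, derive uniform energy/dissipation bounds, and pass to the limit by Aubin--Lions--Simon plus Minty's monotonicity trick---is exactly the route the paper takes (Theorem~\ref{thm:Local_Ex_PL_reg}, Lemma~\ref{lem:uniform_bounds}, Lemmas~\ref{lem:convergence}--\ref{lem:limit_flux}). Two steps, however, do not go through as written.

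First, the claim $\psi_\eps'(s)\geq c(\eps)>0$ is false in the shear-thickening range $0<\alpha<1$: one computes
\[
\psi_\eps'(s)=(s^2+\eps)^{\frac{\alpha-3}{2}}\bigl(\alpha s^2+\eps\bigr)\ \sim\ \alpha|s|^{\alpha-1}\ \longrightarrow\ 0
\quad\text{as }|s|\to\infty,
\]
so the regularised equation is \emph{not} uniformly parabolic and you cannot invoke maximal regularity directly. The paper handles this by a second extension: it replaces the top-order coefficient $v^{\alpha+2}\psi_\sigma'(v_{xxx})$ by $\max\{v_+^{\alpha+2}\psi_\sigma'(v_{xxx}),\eps/2\}$, applies Amann's theory to this uniformly parabolic extended problem, and then uses that the resulting local solution lies in $W^{4\theta}_{\alpha+1,B}(\Omega)\hookrightarrow C^3(\bar\Omega)$ (so $u_{xxx}$ is bounded and $u>0$) to conclude that extended and original operators agree on a possibly shorter time interval.

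Second, you obtain the \emph{common} time $T_*$ with $u_\eps\geq m/2$ from ``continuity in time in $H^1$'' before any uniform bounds are in place. Continuity of each individual $u_\eps$ only gives an $\eps$-dependent time; a common $T_*$ requires equicontinuity uniform in $\eps$. The paper resolves this apparent circularity by first proving the $L_\infty\bigl((0,T_\sigma);H^1\bigr)$ bound on $u^\sigma$ and the $L_{(\alpha+1)/\alpha}\bigl((0,T_\sigma);(W^1_{\alpha+1,B})'\bigr)$ bound on $u^\sigma_t$---neither of which needs a positive lower bound on $u^\sigma$---then applying Aubin--Lions--Simon to get equicontinuity in $C\bigl([0,T];C^\rho(\bar\Omega)\bigr)$, and only \emph{then} extracting the common lower bound needed for the $L_{\alpha+1}$-control of $u^\sigma_{xxx}$. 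Your ordering of these steps has to be reversed.
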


%-------------------------------------
%-------------------------------------

Observe that due to the positivity of a solution $u$ to \eqref{eq:PDE} we have
\begin{equation*}
    \int_\Omega u(t,x)\, dx
    =
    \|u(t)\|_{L_1(\Omega)} 
    =
    \|u_0\|_{L_1(\Omega)},
    \quad 0 \leq t \leq T.
\end{equation*}

%-------------------------------------
%-------------------------------------

\begin{remark}\label{Rem:H1-initial}
    In fact, the above theorem holds true for initial values $u_0 \in H^1(\Omega)$. We choose $u_0$ in the smaller space $W^{4\rho}_{\alpha+1}(\Omega)$ since the solutions $u$ to the original problem are constructed as accumulation points of \textbf{strong} solutions $u^\sigma$ to a regularised problem, not only as functions satisfying a suitable weak formulation. In order to apply semigroup theory, we require the initial value to satisfy $u_0 \in W^{4\rho}_{\alpha+1}(\Omega)$. That $u_0 \in H^1(\Omega)$ is enough can be seen by replacing $u_0$ by $u_0^\sigma \in W^{4\rho}_{\alpha+1}(\Omega)$ with 
    \begin{equation*}
        u_0^\sigma(x) > 0,\ x \in \bar{\Omega},
        \quad 
        \bar{u}_0^\sigma = \bar{u}_0 = \fint_\Omega u_0\, dx
        \quad \text{and} \quad 
        u_0^\sigma \longrightarrow u_0 \quad \text{strongly in } H^1(\Omega), \text{ as } \sigma \searrow 0.
    \end{equation*}
    This can for instance be obtained by a symmetric extension of the initial value $u_0 \in H^1(\Omega)$ at the lateral boundaries and mollification.
\end{remark}

%-----------------------------------------------------
%-----------------------------------------------------

\begin{remark}\label{Rem:extension}
Given a positive weak solution $u \in C\bigl([0,T];H^1(\Omega)\bigr) 
\cap L_{\alpha+1}\bigl((0,T);W^3_{\alpha+1,B}(\Omega)\bigr)$ to \eqref{eq:PDE} as obtained in Theorem \ref{thm:Local_Ex_PL}, we may extend it beyond time \(T\) by restarting the equation with initial datum \(u(T)\) and using that \(u(T,x)>0\) for all \(x\in \bar{\Omega}\) and Remark \ref{Rem:H1-initial}. In fact, in this way we can construct a weak solution to \eqref{eq:PDE} in the sense of Definition \ref{def:weak_sol} up to a time \(T_*>0\) at which \(u(T_*,x) = 0\) for some \(x\in \bar{\Omega}\). Note though, that the solutions in Theorem \ref{thm:Local_Ex_PL} are not unique, so that the `maximal' time \(T_*\) of existence of positive solutions is not unique. 
\end{remark}

%-----------------------------------------------------
%-----------------------------------------------------
\bigskip

\noindent\textbf{\textsc{Positive Steady states of \eqref{eq:PDE}. }}
We are interested in the stability properties of steady-state solutions to \eqref{eq:PDE}, i.e., in functions $u^\ast \in W^3_{\alpha+1,B}(\Omega)$ that solve the ordinary differential equation
\begin{equation} \label{eq:ODE}
    U^{\alpha+2} |U^{\prime\prime\prime}|^{\alpha-1} U^{\prime\prime\prime}
    =
    0,
    \quad
    x \in \Omega.
\end{equation}
In physical parlance, \eqref{eq:ODE} says that there is no flux of the fluid through the boundaries of the interval. Positive steady states of \eqref{eq:PDE} may be easily characterised by the following theorem.

%-----------------------------------------------------
%-----------------------------------------------------
\begin{theorem}[Characterisation of positive steady states] \label{thm:char_steady_states}
A function $u \in W^3_{\alpha+1,B}(\Omega)$ is a positive steady-state solution of \eqref{eq:PDE} if and only if $u \equiv u^\ast \in \R_{>0}$ is given by a positive constant.
\end{theorem}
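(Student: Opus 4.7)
The plan is to handle the two implications separately. The easy direction --- that any positive constant $u \equiv c > 0$ defines a steady state --- is immediate: all derivatives of $u$ vanish, so trivially $u \in W^3_{\alpha+1,B}(\Omega)$ and equation \eqref{eq:ODE} is satisfied pointwise.

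For the nontrivial direction, assume that $u \in W^3_{\alpha+1,B}(\Omega)$ is positive and solves \eqref{eq:ODE}. First I would invoke the one-dimensional Sobolev embedding $W^3_{\alpha+1}(\Omega) \hookrightarrow C^2(\bar\Omega)$ to conclude that $u$ is continuous on $\bar\Omega$; combined with the positivity assumption $u > 0$ on the compact set $\bar\Omega$, this yields a uniform lower bound $u \geq \delta > 0$ on $\bar\Omega$, and in particular $u^{\alpha+2} \geq \delta^{\alpha+2}$ pointwise. Interpreting \eqref{eq:ODE} almost everywhere, I can therefore divide by $u^{\alpha+2}$ to obtain $|u_{xxx}|^{\alpha-1} u_{xxx} = 0$ a.e., which forces $u_{xxx} = 0$ a.e. in $\Omega$.

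Writing $\Omega = (a,b)$, integrating $u_{xxx} = 0$ three times then shows that $u(x) = c_1 x^2/2 + c_2 x + c_3$ for some constants $c_1, c_2, c_3 \in \R$; in particular, $u \in C^\infty(\bar\Omega)$. Since $1 + 1/(\alpha+1) < 3 < 3 + 1/(\alpha+1)$, the space $W^3_{\alpha+1,B}(\Omega)$ incorporates the boundary condition $u_x = 0$ on $\partial\Omega$, which, thanks to smoothness of $u$, holds pointwise. Evaluating $u_x(x) = c_1 x + c_2$ at the two endpoints gives $c_1 a + c_2 = 0$ and $c_1 b + c_2 = 0$; subtracting yields $c_1 (b - a) = 0$, hence $c_1 = c_2 = 0$ and $u \equiv c_3$. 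Finally, positivity forces $c_3 > 0$, completing the proof.

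There is essentially no real obstacle here: the argument is a direct reduction of the degenerate ODE to the linear equation $u_{xxx} = 0$ via positivity, followed by elementary integration against the built-in boundary condition. The only delicate point is checking the regularity and boundary-condition conventions encoded in $W^3_{\alpha+1,B}(\Omega)$.
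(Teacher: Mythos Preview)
Your proposal is correct and follows essentially the same route as the paper: both arguments use positivity of $u$ on $\bar\Omega$ to conclude $u_{xxx}=0$ a.e.\ from the steady-state equation, then integrate and invoke the Neumann condition $u_x=0$ on $\partial\Omega$ to force $u$ to be constant. The only cosmetic difference is that the paper phrases the first step via the dissipation functional, observing that $D[u^\ast]=\int_\Omega |u^\ast|^{\alpha+2}|u^\ast_{xxx}|^{\alpha+1}\,dx=0$, whereas you divide the pointwise ODE by $u^{\alpha+2}$; the content is identical.
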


%-----------------------------------------------------
%-----------------------------------------------------

\begin{proof}
\noindent\textbf{(i)} Let $u \equiv u^\ast \in \R_{>0}$. Then $u^\ast \in W^{3}_{\alpha+1,B}(\Omega)$ clearly satisfies the ODE \eqref{eq:ODE}.

\noindent\textbf{(ii)} Let $u = u^\ast \in W^3_{\alpha+1,B}(\Omega)$ be an arbitrary positive steady-state solution of \eqref{eq:PDE}, i.e., a solution to the ODE \eqref{eq:ODE}. Then $u^\ast$ satisfies
\begin{equation*}
    0 
    =
    \frac{d}{dt} E[u^\ast]
    =
    -D[u^\ast]
    =
    -\int_\Omega |u^\ast|^{\alpha+2} |u^\ast_{xxx}|^{\alpha+1}\, dx.
\end{equation*}
Since the integrand on right-hand side of this equation is non-negative and $u^\ast(x) > 0,\, x \in \bar{\Omega}$, it follows that $u^\ast_{xxx} \equiv 0$ on $\bar{\Omega}$. Consequently, $u^\ast_{xx}$ is constant and this in turn implies that $u^\ast_x$ is linear. Taking the Neumann boundary conditions into account, we find that $u^\ast$ must be constant.
\end{proof}

%-----------------------------------------------------
%-----------------------------------------------------
\bigskip

\subsection{Local Existence of Positive Solutions to the Regularised Problem and Uniform A-Priori Bounds} \label{ssec:Existence_PL_reg}

In order to handle the difficulties caused by the doubly nonlinear and doubly degenerate nature of the evolution problem \eqref{eq:PDE}, we introduce, for a fixed regularisation parameter $\sigma \in (0,1)$ and all $s\in\R$, the smooth function 
\begin{equation*}
    \psi_\sigma(s) = (s^2 + \sigma^2)^\frac{\alpha-1}{2} s,
    \quad 
    s \in \R,
\end{equation*}
and substitute the nonlinear term $\psi(u_{xxx})$ in \eqref{eq:PDE} accordingly. The \textbf{regularised problem} corresponding to \eqref{eq:PDE} then reads
\begin{equation}\label{eq:PDE_reg}\tag{$P_\sigma$}
    \begin{cases}
        u^\sigma_t + \bigl((u^\sigma)^{\alpha+2} \psi_\sigma(u^\sigma_{xxx})\bigr)_x
        =
        0,
        &
        t > 0,\, x \in \Omega,
        \\
        u^\sigma_x(t,x) = u^\sigma_{xxx}(t,x) = 0,
        &
        t > 0,\, x \in \partial\Omega,
        \\
        u^\sigma(0,x)
        =
        u_0(x), &
        x \in \Omega.
    \end{cases}
\end{equation}
It follows from standard parabolic theory \cite{amann_nonhomogeneous_1993,eidelman_parabolic_1969,lienstromberg_local_2020} that the regularised problem \eqref{eq:PDE_reg} possesses, for each fixed $\sigma \in (0,1)$ and suitable initial data, a unique maximal strong solution $u^\sigma$. This is the content of Theorem \ref{thm:Local_Ex_PL_reg} below. Moreover, in Lemma \ref{lem:uniform_bounds} below, we provide a-priori bounds for the strong solution that are uniform in the regularisation parameter $\sigma > 0$. First, though, we define what we mean by a maximal strong solution to \eqref{eq:PDE_reg}. 

%-----------------------------------------------------
%-----------------------------------------------------

\begin{definition}
Fix $\alpha > 0$ and $\sigma \in (0,1)$. Let $1 < p < \infty$. Given a positive initial value $u_0 \in L_p(\Omega)$, we call a function $u\colon [0,T_u) \to L_p(\Omega)$ with $u(t,x) > 0$ for $t \in [0,T_u)$ and $x \in \bar{\Omega}$ a \textbf{maximal positive strong solution} to \eqref{eq:PDE_reg} on $[0,T_u)$ in $L_p(\Omega)$ if the following conditions are satisfied:\\[-0.3cm]
\begin{itemize}
    \item[(i)] $u \in C\bigl([0,T_u);L_p(\Omega)\bigr) \cap C^1\bigl((0,T_u);L_p(\Omega)\bigr)$;\\[-0.3cm]
    \item[(ii)] $u(0) = u_0 \in L_p(\Omega)$ and $u(t) \in W^4_{p,B}(\Omega)$ for all $t \in (0,T_u)$;\\[-0.3cm]
    \item[(iii)] (Positivity) $u(t,x) > 0$ for $t \in [0,T_u)$ and $x \in \bar{\Omega}$;\\[-0.3cm]
    \item[(iv)] $u$ satisfies the differential equation $\eqref{eq:PDE_reg}_1$ pointwise;\\[-0.3cm]
    \item[(v)] (Maximality) There is no other solution $v$ on $[0,T_v)$ with $T_u < T_v$.
\end{itemize}
\end{definition}

%-----------------------------------------------------
%-----------------------------------------------------
Clearly, solutions to \eqref{eq:PDE_reg}, as obtained in the following theorem, do also dissipate energy. We therefore introduce the notation
\begin{equation*}
    D_\sigma[u^\sigma](t) = \int_\Omega (u^\sigma)^{\alpha+2} |u^\sigma_{xxx}|^{\alpha+1}\, dx
\end{equation*}
for the dissipation functional corresponding to the energy functional $E[\cdot]$ and the regularised equation \eqref{eq:PDE_reg}.

\begin{theorem}[Local existence for \eqref{eq:PDE_reg}] \label{thm:Local_Ex_PL_reg}
Fix $\alpha > 0$ and $\sigma \in (0,1)$. Let $1/(\alpha +1) < s < r < 1$. Moreover, let $\theta = \frac{3+s}{4}$ and 
$\rho = \frac{3+r}{4}$. Then, given an initial film height 
$u_0 \in W^{4\rho}_{\alpha+1,B}(\Omega)$ such that $u_0(x) > 0\,$ for all $x \in \bar{\Omega}$, problem \eqref{eq:PDE_reg} possesses a unique maximal solution
\begin{equation*}
	u^\sigma \in C\bigl([0,T_\sigma);W^{4\rho}_{\alpha+1,B}(\Omega)\bigr)
	\cap
	C^{\rho}\bigl([0,T_\sigma);L_{\alpha+1}(\Omega)\bigr)
	\cap
	C\bigl((0,T_\sigma);W^4_{\alpha+1,B}(\Omega)\bigr)
	\cap
	C^1\bigl((0,T_\sigma);L_{\alpha+1}(\Omega)\bigr).
\end{equation*}
Moreover, the solution enjoys the following properties.
\begin{itemize}
    \item[(i)] (Positivity) $u^\sigma$ is positive
        \begin{equation*}
        	u^\sigma(t,x) > 0,
            \quad
            0 \leq t < T_\sigma,\, x\in \bar{\Omega}.
            \end{equation*}
    \item[(ii)] (Conservation of mass) $u^\sigma$ conserves its mass in the sense that
        \begin{equation}\label{eq:cons-mass-sigma}
            \|u^{\sigma}(t)\|_{L_1(\Omega)} 
            =
            \|u_0\|_{L_1(\Omega)},
            \quad 0 \leq t < T_\sigma.
        \end{equation}
    \item[(iii)] (Energy-dissipation identity) $u^\sigma$ satisfies the energy-dissipation identity
    \begin{equation}\label{eq:energy-diss-sigma}
        E[u^\sigma](t) + \int_0^t D_{\sigma}[u^{\sigma}](s) 
        =
        E[u_0],
        \quad
        0 \leq t < T_\sigma.
    \end{equation}
    \item[(iv)] (Maximal time of existence) Suppose that $T_\sigma < \infty$. Then
        \begin{equation*}
            \liminf_{t \nearrow T_\sigma} \frac{1}{\min_{x \in \bar{\Omega}} u^\sigma(t)} 
            + 
            \|u^\sigma(t)\|_{W^{4\gamma}_{\alpha+1,B}(\Omega)} 
            = 
            \infty
        \end{equation*}
    for all $\gamma \in (\theta,1]$.
\end{itemize}
\end{theorem}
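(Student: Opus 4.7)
The plan is to recast \eqref{eq:PDE_reg} as a quasilinear parabolic initial-boundary value problem to which the theory of Amann \cite{amann_nonhomogeneous_1993} (as used in \cite{lienstromberg_local_2020}) applies, and then to derive the properties (i)--(iv) by direct arguments on the resulting strong solution. Expanding the divergence in $\eqref{eq:PDE_reg}_1$ gives
\begin{equation*}
    u^\sigma_t + u^{\alpha+2}\psi_\sigma'(u^\sigma_{xxx})\, u^\sigma_{xxxx}
    = -(\alpha+2)\,u^{\alpha+1}u^\sigma_x\,\psi_\sigma(u^\sigma_{xxx}),
\end{equation*}
and a short computation shows
\begin{equation*}
    \psi_\sigma'(s) = (s^2+\sigma^2)^{\frac{\alpha-3}{2}}\bigl[\alpha\, s^2 + \sigma^2\bigr] > 0,
\end{equation*}
so that the principal part is a quasilinear fourth-order operator $\Acal(v) w := v^{\alpha+2}\psi_\sigma'(v_{xxx})\, w_{xxxx}$ which is strictly (and uniformly, locally in $v$) normally elliptic whenever $v$ is positive. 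This is the crucial structural input for the abstract machinery.

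First, I would set up the Banach-space framework: the base space is $E_0=L_{\alpha+1}(\Omega)$, the domain is $E_1=W^4_{\alpha+1,B}(\Omega)$, and the natural interpolation spaces are $E_\xi = W^{4\xi}_{\alpha+1,B}(\Omega)$ for $\xi\in(0,1)\setminus\{(1+1/(\alpha+1))/4,(3+1/(\alpha+1))/4\}$. With $\theta=(3+s)/4$ and $\rho=(3+r)/4$ for $1/(\alpha+1)<s<r<1$, Sobolev embedding ensures that $u\mapsto u^{\alpha+2}\psi_\sigma'(u_{xxx})$ and the lower-order right-hand side are locally Lipschitz from $E_\rho$ to suitable spaces (this is where the smoothness of $\psi_\sigma$ for $\sigma>0$ is used). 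Positivity of $u_0$ on $\bar\Omega$ together with the embedding $W^{4\rho}_{\alpha+1,B}(\Omega)\embed C^3(\bar\Omega)$ guarantees $\min_{\bar\Omega}u_0>0$, so the principal symbol is uniformly elliptic on a neighbourhood of $u_0$. Amann's theorem then yields a unique maximal strong solution $u^\sigma$ in the regularity class stated, with the blow-up criterion: either $T_\sigma=\infty$ or $\|u^\sigma(t)\|_{E_\gamma}+1/\min_{\bar\Omega} u^\sigma(t)\to\infty$ as $t\nearrow T_\sigma$ for $\gamma\in(\theta,1]$, which gives (iv).

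Property (i) is obtained in two steps: the instantaneous positivity on some short interval follows from the continuity of $u^\sigma$ with values in $C(\bar\Omega)$ and the strict positivity of $u_0$; then one extends positivity up to $T_\sigma$ by defining $T_+:=\sup\{t\in[0,T_\sigma): u^\sigma>0 \text{ on }[0,t]\times\bar\Omega\}$ and invoking the blow-up criterion (the positivity cannot fail strictly before $T_\sigma$, for otherwise $1/\min u^\sigma$ blows up and forces $T_+=T_\sigma$). Property (ii) follows by integrating $\eqref{eq:PDE_reg}_1$ over $\Omega$, applying the divergence theorem, and using $u^\sigma_{xxx}=0$ on $\partial\Omega$ so that the flux $(u^\sigma)^{\alpha+2}\psi_\sigma(u^\sigma_{xxx})$ vanishes there. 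Property (iii) is obtained by testing $\eqref{eq:PDE_reg}_1$ with $-u^\sigma_{xx}$: the boundary terms from two integrations by parts vanish thanks to $u^\sigma_x=u^\sigma_{xxx}=0$ on $\partial\Omega$, leaving
\begin{equation*}
    \frac{d}{dt}E[u^\sigma](t)
    = -\int_\Omega (u^\sigma)^{\alpha+2}\, \psi_\sigma(u^\sigma_{xxx})\, u^\sigma_{xxx}\, dx
    = -\int_\Omega (u^\sigma)^{\alpha+2} ((u^\sigma_{xxx})^2+\sigma^2)^{\frac{\alpha-1}{2}}(u^\sigma_{xxx})^2\, dx,
\end{equation*}
and then integrating in time; a minor technical point is that this a priori only bounds a regularised dissipation, but pointwise $((u^\sigma_{xxx})^2+\sigma^2)^{(\alpha-1)/2}(u^\sigma_{xxx})^2 \geq |u^\sigma_{xxx}|^{\alpha+1}$ when $\alpha\geq 1$ while for $\alpha<1$ the inequality is reversed, so a short direct check with Fatou or dominated convergence (using the strong regularity of $u^\sigma$) is needed to recover exactly the statement \eqref{eq:energy-diss-sigma}.

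The main obstacle I anticipate is the careful verification of the structural hypotheses for Amann's theory at the chosen level of regularity: one must ensure that the nonlinear dependence $v\mapsto \Acal(v)$ together with the Neumann-type boundary operators $(v_x,v_{xxx})|_{\partial\Omega}$ fits the class of normally elliptic boundary value problems of separated divergence form with Lipschitz coefficients on $E_\rho$, and that the exponents $\theta,\rho$ satisfying $3+1/(\alpha+1)<4\theta<4\rho$ are admissible (so that in particular $u_{xxx}$ is a continuous function). Once this is in place, the remaining items are routine and, crucially, all estimates in (ii)--(iii) remain valid in the $\sigma\searrow 0$ limit without quantitative deterioration, which is exactly what is required for the uniform a priori bounds leading to Theorem \ref{thm:Local_Ex_PL}.
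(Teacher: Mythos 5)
Your overall approach matches the paper's: recast the regularised problem in non-divergence form, verify the structural hypotheses of Amann-type quasilinear parabolic theory (as packaged in \cite{lienstromberg_local_2020}), and derive (ii)--(iv) from the resulting strong solution by elementary integrations by parts and the abstract blow-up criterion. One mechanical detail you gloss over: the map $v\mapsto v^{\alpha+2}\psi_\sigma'(v_{xxx})$ is not defined (or is degenerate) for non-positive $v$, so to fit the abstract setting one needs the quasilinear operator to be defined and uniformly elliptic on an \emph{open} set of the phase space; the paper handles this by passing through the $\eps$-truncated operator $\bar{\Acal}_\eps(v)u = \max\{v^{\alpha+2}_+\psi_\sigma'(v_{xxx}),\eps/2\}\,\partial_x^4 u$ and showing a posteriori that, for small $\eps$ and short time, the solution of the extended problem also solves the original one. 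Your remark that ellipticity holds on a neighbourhood of $u_0$ contains the right idea, but does not by itself produce the required open domain of definition for the quasilinear map.

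The more substantive slip is in your treatment of (iii). There is no gap to patch with Fatou or dominated convergence, and in fact such an argument could not work: at fixed $\sigma>0$ the quantities $\int_\Omega (u^\sigma)^{\alpha+2}\bigl((u^\sigma_{xxx})^2+\sigma^2\bigr)^{(\alpha-1)/2}(u^\sigma_{xxx})^2\,dx$ and $\int_\Omega (u^\sigma)^{\alpha+2}|u^\sigma_{xxx}|^{\alpha+1}\,dx$ genuinely differ and no limit theorem converts one into the other. What you have run into is a typo: the displayed definition of $D_\sigma$ just before the theorem is missing the $\sigma$-regularisation, but the quantity the paper actually works with (see its proof of (iii) and every use in Lemma \ref{lem:uniform_bounds}) is
\begin{equation*}
    D_\sigma[u^\sigma](t) = \int_\Omega (u^\sigma)^{\alpha+2}\,\psi_\sigma(u^\sigma_{xxx})\,u^\sigma_{xxx}\,dx
    = \int_\Omega (u^\sigma)^{\alpha+2}\bigl((u^\sigma_{xxx})^2+\sigma^2\bigr)^{\frac{\alpha-1}{2}}(u^\sigma_{xxx})^2\,dx.
\end{equation*}
With this reading, your energy computation gives \eqref{eq:energy-diss-sigma} as an exact identity, and the direction-of-inequality discussion you set up is unnecessary.
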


%-----------------------------------------------------
%-----------------------------------------------------

\begin{proof}
\noindent\textbf{(i) Local existence, uniqueness and positivity. } In order to prove local existence and uniqueness of a strong solution we apply \cite[Theorem 4.2]{lienstromberg_local_2020}. To this end, we verify that \eqref{eq:PDE_reg} fits into the corresponding abstract functional setting. 
% Indeed, for $1 < p < \infty$ and $s > 1/p$ we define the open subset 
% \begin{equation*}
%     \Scal = \left\{v \in W^{3+s}_{p,B}(\Omega);\, v(x) > 0,\ x \in \bar{\Omega}\right\}
% \end{equation*}
% of $W^{3+s}_{p,B}(\Omega)$. 
Moreover, after rewriting \eqref{eq:PDE_reg} in non-divergence form, we define for $v(t) \in W^{4\theta}_{\alpha+1,B}(\Omega)$ with $\theta=(3+s)/4$ such that $v(x) > 0,\ x \in \bar{\Omega}$, the linear differential operator $\Acal(v(t)) \in \Lcal\bigl(W^4_{\alpha+1,B}(\Omega);L_{\alpha+1}(\Omega)\bigr)$ of fourth order by
\begin{equation*}
    \Acal(v(t)) u^\sigma
    =
    A(v(t)) \partial_x^4 u^\sigma
    \quad
    \text{with}
    \quad
    A(v(t)) = v^{\alpha+2} \psi_\sigma^\prime(v_{xxx}),
\end{equation*}
where 
\begin{equation*}
    \psi_\sigma^\prime(s) 
    = 
    (\alpha-1) (s^2 + \sigma^2)^\frac{\alpha-3}{2} s^2 + (s^2 + \sigma^2)^\frac{\alpha-1}{2}
    =
    \alpha (s^2 + \sigma^2)^\frac{\alpha-1}{2}
    -
    \sigma^2 (\alpha-1) (s^2 + \sigma^2)^\frac{\alpha-3}{2},
    \quad s \in \R.
\end{equation*}
Note that for positive $\sigma \in (0,1)$ we have $\psi_\sigma^\prime(s) > C_{\sigma,\alpha} > 0$ for all $s\in \R$ and all fixed $\alpha > 0$.
Moreover, we introduce the right-hand side
\begin{equation*}
    \Fcal(v(t)) 
    =
    -(\alpha+2) v^{\alpha+1} v_x\, \psi_\sigma(v_{xxx})
\end{equation*}
and perceive \eqref{eq:PDE_reg} as an abstract quasilinear Cauchy problem
\begin{equation*}
    \begin{cases}
        \dot{u^\sigma} + \Acal(u^\sigma)u^\sigma = \Fcal(u^\sigma), \quad t > 0,
        \\
        u^\sigma(0) = u_0.
    \end{cases}
\end{equation*}
Note that the Neumann-type boundary conditions $\eqref{eq:PDE_reg}_2$ are incorporated in the domain $W^4_{\alpha+1,B}(\Omega)$ of the operator $\Acal(v(t))$. Due to the smoothness of $\psi_\sigma$ the maps
\begin{equation*}
    \Acal\colon W^{3+s}_{\alpha+1,B}(\Omega) \longrightarrow
    \Lcal\bigl(W^4_{\alpha+1,B}(\Omega);L_{\alpha+1}(\Omega)\bigr)
    \quad \text{and} \quad
    \Fcal\colon W^{3+s}_{\alpha+1,B}(\Omega) \longrightarrow L_{\alpha+1}(\Omega)
\end{equation*}
are, for all $\alpha > 0$, locally Lipschitz continuous. In order to guarantee parabolicity, we extend the differential operator $\Acal$ to the differential operator 
\begin{equation*}
    \bar{\Acal}_\eps(v(t)) \in \Lcal\bigl(W^4_{\alpha+1,B}(\Omega);L_{\alpha+1}(\Omega)\bigr),
    \quad
    \bar{\Acal}_\eps(v(t))u^\sigma = \bar{A}_{\eps}(v(t)) \partial_x^4 u^\sigma,
\end{equation*}
where
\begin{equation*}
    \bar{A}_{\eps}(v(t)) = \max\left\{v^{\alpha+2}_+ \psi_\sigma^\prime(v_{xxx}),\eps/2\right\}
\end{equation*}
and $v_+ = \max\{v,0\}$. Following the lines of \cite[Chapter 5]{lienstromberg_local_2020}, we study the extended parabolic problem with $\bar{\Acal}_\eps$ instead of $\Acal$ and show that the corresponding local positive solution $u^\sigma=u^\sigma(\eps)$ also solves the non-extended problem \eqref{eq:PDE_reg} for a short but strictly positive time.
More precisely, the extended regularised problem is, for each fixed $\sigma,\eps \in (0,1)$, parabolic in the sense that $\bar{\Acal}_{\eps}(v(t))$ generates an analytic semigroup on $L_{\alpha+1}(\Omega)$. Indeed, due to the embedding $W^{3+s}_{\alpha+1,B}(\Omega) \hookrightarrow C^3(\bar{\Omega})$ and the positivity of $\sigma,\eps > 0$, we have that $\bar{A}_\eps(v(t,\cdot)) \in C(\bar{\Omega})$.
Moreover, the principal symbol $a_{\eps}(x,\xi)$ satisfies
\begin{equation*}
    \text{Re}(a_\eps(x,\xi)\eta | \eta)
    \geq
    C_{\sigma,\alpha,\eps}(i\xi)^4 \eta^2 > 0,
    \quad
    (x,\xi) \in \bar{\Omega}\times \{-1,1\},\, \eta \in \R\setminus\{0\},
\end{equation*}
for a positive constant $C_{\sigma,\alpha,\eps} > 0$.
Consequently, $\bar{\Acal}_\eps(v(t))$, together with the Neumann-type boundary conditions, is normally elliptic in the sense of \cite[Example 4.3(d)]{amann_nonhomogeneous_1993} and we can apply \cite[Theorem  4.1 and Remark 4.2(b)]{amann_nonhomogeneous_1993} to conclude that $\bar{\Acal}_\eps(v(t))$ generates an analytic semigroup on $L_{\alpha+1}(\Omega)$. Thus, we are in the abstract setting of \cite[Theorem 4.2]{lienstromberg_local_2020} which yields existence and uniqueness of a local positive strong solution to the extended problem in $L_{\alpha+1}(\Omega)$. On a potentially smaller time interval, this solution $u^\sigma=u^\sigma(\eps)$ is, for $\eps$ small enough, also a local positive strong solution to \eqref{eq:PDE_reg}, see step (iii) in the proof of  \cite[Theorem 5.1]{lienstromberg_local_2020}.

\noindent\textbf{(ii) Conservation of mass. } This follows by testing the regularised partial differential equation $\eqref{eq:PDE_reg}_1$ with the constant function $\varphi \equiv 1$, integration by parts and using the Neumann boundary conditions $\eqref{eq:PDE_reg}_2$.

\noindent\textbf{(iii) Energy-dissipation identity. } Since the solution obtained in step (i) enjoys the regularity
\begin{equation*}
    u_x^\sigma \in C\bigl((0,T);W^1_{\alpha+1,0}(\Omega)\bigr) \cap C^1\bigl((0,T);(W^1_{\alpha+1,0}(\Omega))^\prime\bigr),
\end{equation*}
we may apply 
\cite[Proposition 6.1]{lienstromberg_local_2020} in order to guarantee that the expression 
\begin{equation*}
    \frac{d}{dt} E[u^\sigma](t)
    =
    \int_\Omega u^\sigma_{xt} u^\sigma_x\, dx
    =
    -\int_\Omega |u^\sigma|^{\alpha+2} \bigl(|u^\sigma_{xxx}|^2 + \sigma^2\bigr)^\frac{\alpha-1}{2} |u^\sigma_{xxx}|^2\, dx
    =
    -D_\sigma[u^\sigma](t)
\end{equation*}
is well-defined for all $t \in (0,T)$. Integrating with respect to time gives the energy-dissipation identity.

\noindent\textbf{(iv) Maximal time of existence. } 
Using the notation introduced in step (i), this result is a minor adaptation of \cite[Theorem 7.1]{lienstromberg_local_2020}.

% We prove the contraposition and assume  that $T_\sigma \leq \tau$. Then we have
% \begin{equation*}
%     \liminf_{t\to T_\sigma^-} \min_{\theta \in S^1} h^\sigma(t,\theta) = 0
%     \quad \text{or} \quad
%     \limsup_{t\to T_\sigma^-}\, \max_{\theta\in S^1} h^\sigma(t,\theta) = \infty,
% \end{equation*}
% which contradicts the assumption on $\tau$.
\end{proof}

%-----------------------------------------------------
%-----------------------------------------------------

In order to prove the local-existence result for the original problem (Theorem \ref{thm:Local_Ex_PL}), we need suitable uniform (in $\sigma$) a-priori estimates for the solution to \eqref{eq:PDE_reg} as given in the following lemma.

%-----------------------------------------------------
%-----------------------------------------------------

\begin{lemma}[Uniform bounds] \label{lem:uniform_bounds}
Let $u^\sigma$ be the maximal solution to \eqref{eq:PDE_reg} for a fixed $\sigma \in (0,1)$ and an initial value $u_0 \in W^{4\rho}_{\alpha+1,B}(\Omega)$ such that $u_0(x) > 0\,$ for all $x \in \bar{\Omega}$.
Then the following holds true. There is \(T>0\) such that the family
$(u^\sigma)_\sigma$ has the following properties:\\[-0.3cm]
\begin{itemize}
	\item[(i)] $(u^\sigma)_\sigma$ is uniformly bounded in $L_\infty\bigl((0,T);H^1(\Omega)\bigr)$;\\[-0.3cm] %\cap L_{\alpha+1}\bigl((0,T);W^3_{\alpha+1}(\Omega)\bigr)$;
	\item[(ii)] $\bigl(\left|u^\sigma\right|^{\alpha+2} \psi_\sigma(u^\sigma_{xxx})\bigr)_\sigma$ is uniformly bounded in $L_\frac{\alpha+1}{\alpha}\bigl((0,T)\times \Omega\bigr)$;\\[-0.3cm]
	\item[(iii)] $(u^\sigma_t)_\sigma$ is uniformly bounded in $L_\frac{\alpha+1}{\alpha}\bigl((0,T);(W^1_{\alpha+1,B}(\Omega))'\bigr)$;\\[-0.3cm]
	\item[(iv)] $(u^\sigma_{xxx})_\sigma$ is uniformly bounded in $L_{\alpha+1}\bigl((0,T)\times \Omega\bigr)$;\\[-0.3cm]
	\item[(v)] $(u^\sigma)_\sigma$ is uniformly bounded in $L_{\alpha+1}\bigl((0,T);W^3_{\alpha+1,B}(\Omega)\bigr)$;\\[-0.3cm]
	\item[(vi)] $((u^\sigma_x)_t)_\sigma$ is uniformly bounded in $L_\frac{\alpha+1}{\alpha}\bigl((0,T);\bigl(W^1_{\alpha+1,0}(\Omega)\cap W^2_{\alpha+1}(\Omega)\bigr)'\bigr)$.
\end{itemize}
\end{lemma}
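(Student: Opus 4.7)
The plan is to work on the natural interval of uniform positivity of $u^\sigma$ and extract all bounds from the energy-dissipation identity \eqref{eq:energy-diss-sigma} and conservation of mass \eqref{eq:cons-mass-sigma} provided by Theorem \ref{thm:Local_Ex_PL_reg}. Setting $m_0 := \min_{\bar{\Omega}} u_0 > 0$ I would, for each $\sigma \in (0,1)$, define
\[
T_\sigma^* := \sup\bigl\{\,t \in [0, T_\sigma) \,:\, u^\sigma(s,x) \geq m_0/2 \text{ for all } (s,x) \in [0,t] \times \bar{\Omega}\,\bigr\}.
\]
By continuity of $u^\sigma$ in $C^3(\bar{\Omega})$ (via the embedding $W^{4\rho}_{\alpha+1,B} \hookrightarrow C^3$), we have $T_\sigma^* > 0$ for each $\sigma$. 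All estimates (i)--(vi) will be derived on $[0, T_\sigma^*]$ with constants independent of $\sigma$, and the lemma then reduces to showing $T := \inf_\sigma T_\sigma^* > 0$.

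First I would derive the energy-based bounds. Estimate (i) is immediate from $\|u^\sigma_x(t)\|_{L_2} \leq \|u_{0,x}\|_{L_2}$, mass conservation, and the 1D Poincar\'e--Wirtinger inequality combined with $H^1 \hookrightarrow L_\infty$, which additionally gives $\|u^\sigma\|_{L_\infty((0,T_\sigma^*) \times \Omega)} \leq C$. For (iv) I would exploit the identity $s\psi_\sigma(s) = (s^2+\sigma^2)^{(\alpha+1)/2} - \sigma^2(s^2+\sigma^2)^{(\alpha-1)/2}$ together with the pointwise bound $\sigma^2(s^2+\sigma^2)^{(\alpha-1)/2} \leq C\sigma^{\alpha+1}$, valid for all $\alpha > 0$, and the uniform positivity $u^\sigma \geq m_0/2$ on $[0,T_\sigma^*]$, to conclude
\[
\int_\Omega |u^\sigma_{xxx}|^{\alpha+1}\,dx \leq C(m_0)\bigl(D_\sigma[u^\sigma] + \sigma^{\alpha+1}\bigr);
\]
integrating in time and invoking \eqref{eq:energy-diss-sigma} yields (iv). Estimate (v) then follows from (i) and (iv) by a 1D Gagliardo--Nirenberg interpolation controlling $\|u^\sigma_{xx}\|_{L_{\alpha+1}}$ by $\|u^\sigma_{xxx}\|_{L_{\alpha+1}}$ and $\|u^\sigma\|_{L_\infty}$. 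For (ii) the pointwise bound $\bigl||u^\sigma|^{\alpha+2}\psi_\sigma(u^\sigma_{xxx})\bigr|^{(\alpha+1)/\alpha} \leq C|u^\sigma|^{(\alpha+2)(\alpha+1)/\alpha}(|u^\sigma_{xxx}|^{\alpha+1} + \sigma^{\alpha+1})$, combined with (i) and (iv), gives the conclusion. For (iii) I would test the weak form of $\eqref{eq:PDE_reg}_1$ with $\phi \in W^1_{\alpha+1,B}(\Omega)$, integrate by parts once and use H\"older to obtain $\|u^\sigma_t\|_{(W^1_{\alpha+1,B}(\Omega))'} \leq \bigl\||u^\sigma|^{\alpha+2}\psi_\sigma(u^\sigma_{xxx})\bigr\|_{L_{(\alpha+1)/\alpha}}$, then use (ii). The bound (vi) is analogous, after testing with $\phi \in W^1_{\alpha+1,0}(\Omega) \cap W^2_{\alpha+1}(\Omega)$ and integrating by parts twice; the two boundary terms vanish because $\phi|_{\partial\Omega} = 0$ and $\psi_\sigma(u^\sigma_{xxx})|_{\partial\Omega} = 0$ (from $u^\sigma_{xxx}|_{\partial\Omega}=0$ and $\psi_\sigma(0) = 0$), yielding $\langle (u^\sigma_x)_t, \phi\rangle = -\int_\Omega |u^\sigma|^{\alpha+2}\psi_\sigma(u^\sigma_{xxx})\,\phi_{xx}\,dx$.

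The main obstacle is to prove $\inf_\sigma T_\sigma^* > 0$. For this I would turn the integral bound (iii) into a H\"older-in-time estimate
\[
\|u^\sigma(t) - u^\sigma(s)\|_{(W^1_{\alpha+1,B}(\Omega))'} \leq |t-s|^{\alpha/(\alpha+1)}\, \|u^\sigma_t\|_{L_{\frac{\alpha+1}{\alpha}}(0,T_\sigma^*;(W^1_{\alpha+1,B}(\Omega))')}
\]
and combine it with the uniform $L_\infty((0,T_\sigma^*); H^1)$-bound from (i) via a 1D interpolation inequality of the form
\[
\|v\|_{C(\bar{\Omega})} \leq C \|v\|_{H^1(\Omega)}^{1-\theta}\, \|v\|_{(W^1_{\alpha+1,B}(\Omega))'}^{\theta}
\]
for some $\theta \in (0,1)$, which is available on bounded 1D intervals since $L_2(\Omega)$ embeds continuously into $(W^1_{\alpha+1,B}(\Omega))'$ and $H^{s}(\Omega) \hookrightarrow C(\bar{\Omega})$ for $s > 1/2$. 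This yields a uniform modulus of continuity $\|u^\sigma(t) - u^\sigma(s)\|_{C(\bar{\Omega})} \leq C|t-s|^{\theta\alpha/(\alpha+1)}$; choosing $T > 0$ so small that $CT^{\theta\alpha/(\alpha+1)} < m_0/2$ forbids $u^\sigma$ from reaching the threshold $m_0/2$ before $t = T$ and hence gives $T_\sigma^* \geq T$ uniformly in $\sigma$. The delicate point is to set up this interpolation cleanly, since the embedding of $W^1_{\alpha+1,B}(\Omega)$ relative to $H^1(\Omega)$ depends on whether $\alpha \geq 1$ or $\alpha < 1$; routing the argument through $L_2(\Omega)$ as an intermediate space avoids this dichotomy.
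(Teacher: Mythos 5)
Your proposal follows the same strategy as the paper's proof: extract (i)--(vi) from the energy--dissipation identity \eqref{eq:energy-diss-sigma} and conservation of mass, and then obtain a uniform $T>0$ by an equicontinuity argument based on the bounds (i) and (iii). Your organization is arguably cleaner, since you explicitly set up the positivity interval $[0,T^*_\sigma]$, derive all bounds there, and then bound $T^*_\sigma$ from below, whereas the paper first notes that (i) and (iii) yield Aubin--Lions--Simon equicontinuity, chooses $T$, and proves (iv)--(vi) afterwards. There are, however, two concrete flaws to fix.

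\textbf{On (iv).} The pointwise bound $\sigma^2(s^2+\sigma^2)^{(\alpha-1)/2}\le C\sigma^{\alpha+1}$ is \emph{not} valid for all $\alpha>0$ as you claim: for $\alpha>1$ the exponent $(\alpha-1)/2$ is positive, so the left side grows without bound in $|s|$. Your identity and pointwise bound only close the estimate when $\alpha\le 1$. This is why the paper treats $0<\alpha<1$ and $\alpha>1$ separately. The repair is easy: for $\alpha>1$ no $\sigma$-correction is needed at all, since $|s|^{\alpha+1}=|s|^{\alpha-1}|s|^2\le (s^2+\sigma^2)^{(\alpha-1)/2}|s|^2 = s\psi_\sigma(s)$ directly. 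You should either split cases or at minimum restrict your claimed inequality to $\alpha\le 1$. Note also that the $\sigma^{\alpha+1}$ term, once integrated over time, contributes $T^*_\sigma\,\sigma^{\alpha+1}$, so one must cap $T^*_\sigma$ by a fixed finite number (e.g.\ $\min(T^*_\sigma,1)$) to keep the constants uniform — a minor point you do not address.

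\textbf{On the continuation argument.} Your claimed multiplicative interpolation
\begin{equation*}
\|v\|_{C(\bar\Omega)}\le C\,\|v\|_{H^1(\Omega)}^{1-\theta}\,\|v\|_{(W^1_{\alpha+1,B}(\Omega))'}^{\theta}
\end{equation*}
does not follow from the two facts you cite. The embedding $L_2(\Omega)\hookrightarrow (W^1_{\alpha+1,B}(\Omega))'$ gives $\|v\|_{(W^1_{\alpha+1,B})'}\lesssim\|v\|_{L_2}$, which is the \emph{wrong} direction: it does not allow you to replace $\|v\|_{L_2}$ by $\|v\|_{(W^1_{\alpha+1,B})'}$ on the right of a Gagliardo--Nirenberg bound. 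An inequality of this multiplicative form does in fact hold for zero-mean $v$ and small enough $\theta$, but establishing it requires a genuinely different chain (e.g.\ going through the primitive $V$ of $v$ and the equivalence $\|v\|_{(W^1_{\alpha+1})'}\sim\|V\|_{L_{(\alpha+1)/\alpha}}$), not the one-liner you give. The robust and standard route — and the one the paper takes implicitly via Aubin--Lions--Simon — is Ehrling's lemma: since $H^1(\Omega)\hookrightarrow C(\bar\Omega)$ is compact and $C(\bar\Omega)\hookrightarrow (W^1_{\alpha+1,B}(\Omega))'$ is continuous, for every $\eta>0$ there is $C_\eta$ with $\|v\|_{C(\bar\Omega)}\le\eta\|v\|_{H^1}+C_\eta\|v\|_{(W^1_{\alpha+1,B})'}$. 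Combined with the $L_\infty(H^1)$ bound from (i) and the H\"older-in-time bound from (iii), this yields a $\sigma$-independent modulus of continuity for $u^\sigma$ in $C(\bar\Omega)$. This modulus need not be H\"older, but it is perfectly sufficient for your continuation argument proving $\inf_\sigma T^*_\sigma>0$. I recommend replacing the interpolation claim with Ehrling's lemma or citing Simon's compactness criterion directly.
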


\begin{proof}
Note that once we have proved items (i) and (iii), the Aubin--Lions--Simon Theorem \cite{simon_compact_1986} implies that the family $(u^{\sigma})_{\sigma}$ is equicontinuous. Hence, we may choose \(T>0\) such that \(u^{\sigma}\) is bounded away uniformly from zero on the interval \([0,T]\).

Within this proof, $C > 0$ denotes a positive constant, possibly depending on $\alpha$, $\Omega$, and $\|u_0\|_{W^{4\rho}_{\alpha+1}(\Omega)}$, but independent of $\sigma$. 

\noindent\textbf{(i)} Since
    \begin{equation*}
        D_{\sigma}[u^{\sigma}](t) = \int_{\Omega} (u^{\sigma})^{\alpha+2} \psi_{\sigma}(u^{\sigma}_{xxx}) \, u^{\sigma}_{xxx} \, dx \geq 0,
        \quad  t \in [0,T_{\sigma}),
    \end{equation*}
    we have
    \begin{equation}
    \label{eq:unifen}
        E[u^{\sigma}](t) = \frac{1}{2} \|u_x^{\sigma}( t) \|_{L_2(\Omega)}^2 \leq E[u_0], \quad  t \in [0,T_{\sigma}).
    \end{equation}
    Using Poincaré's inequality and \eqref{eq:cons-mass-sigma}, 
    we obtain for $t \in [0,T_{\sigma})$
    \begin{equation*}
    \|u^{\sigma}(t)\|_{L_2(\Omega)} \leq 
    \|u^{\sigma}(t)- \bar{u}^{\sigma}(t)\|_{L_2(\Omega} + \|\bar{u}^{\sigma}(t)\|_{L_2(\Omega)} \leq C\|u^{\sigma}_x(t)\|_{L_2(\Omega)} + \|\bar{u}_0\|_{L_2(\Omega)},
    \end{equation*}
    which, together with \eqref{eq:unifen}, yields 
    \begin{equation*}
    \sup \limits_{0\leq t\leq T_{\sigma}} \|u^{\sigma}(t)\|_{H^1(\Omega)} \leq C\bigl( \|\bar{u}_0\|_{L_2(\Omega)} + E[u_0]^{1/2} \bigr).
    \end{equation*}
    Hence, \((u^{\sigma})_{\sigma}\) is uniformly bounded in \(L_{\infty}\bigl((0,T_{\sigma});H^1(\Omega)\bigr)\).
    
    \noindent\textbf{(ii)} First we consider the case \(0<\alpha <1\). 
    Observe that 
    \begin{align*}
        \bigl\| |u^{\sigma}|^{\alpha+2}\psi_{\sigma}(u^{\sigma}_{xxx}) \bigr\|_{L_{\frac{\alpha+1}{\alpha}}((0,T_{\sigma})\times \Omega)}^{\frac{\alpha+1}{\alpha}} & = \int_0^{T_{\sigma}} \int_{\Omega} |u^{\sigma}|^{(\alpha+2)\frac{\alpha+1}{\alpha}} \bigl(|u^{\sigma}_{xxx}|^2 + \sigma^2\bigr)^{\frac{\alpha-1}{2}\frac{\alpha+1}{\alpha}}|u^{\sigma}_{xxx}|^{\frac{\alpha+1}{\alpha}} \, dx \, dt \\
        & = \int_0^{T_{\sigma}} \int_{\Omega} |u^{\sigma}|^{(\alpha+2)\frac{\alpha+1}{\alpha}} \bigl(|u^{\sigma}_{xxx}|^2 + \sigma^2\bigr)^{\frac{\alpha-1}{2}\frac{\alpha+1}{\alpha}}|u^{\sigma}_{xxx}|^{\frac{1-\alpha}{\alpha}}|u^{\sigma}_{xxx}|^{2} \, dx \, dt.
    \end{align*}
    Using that \(\frac{1-\alpha}{\alpha} >0\), we get the pointwise estimate \(|u^{\sigma}_{xxx}|^{\frac{1-\alpha}{\alpha}} \leq \bigl( |u^{\sigma}_{xxx}|^2 + \sigma^2\bigr)^{\frac{1-\alpha}{2\alpha}}\). Furthermore,  in view of (i) and $H^1(\Omega) \hookrightarrow L_\infty (\Omega)$, we find that  \((u^{\sigma})_{\sigma}\) is uniformly bounded in \(L_{\infty}\bigl((0,T_{\sigma}); L_\infty(\Omega)\bigr)\). Combining this, we obtain the estimate
    \begin{align*}
         \int_0^{T_{\sigma}} \int_{\Omega} & |u^{\sigma}|^{(\alpha+2)\frac{\alpha+1}{\alpha}} \bigl(|u^{\sigma}_{xxx}|^2 + \sigma^2\bigr)^{\frac{\alpha-1}{2}\frac{\alpha+1}{\alpha}}|u^{\sigma}_{xxx}|^{\frac{1-\alpha}{\alpha}}|u^{\sigma}_{xxx}|^{2} \, dx \, dt \\
        \leq & \int_0^{T_{\sigma}} \int_{\Omega} |u^{\sigma}|^{(\alpha+2)\frac{\alpha+1}{\alpha}} \bigl(|u^{\sigma}_{xxx}|^2 + \sigma^2\bigr)^{\frac{\alpha-1}{2}}|u^{\sigma}_{xxx}|^{2} \, dx \, dt \\
        \leq & C \int_0^{T_\sigma} D_\sigma[u^{\sigma}](t) \, dt \\
        \leq & C E[u_0],
    \end{align*}
    where the last step is due to \eqref{eq:energy-diss-sigma}.
    \noindent In the case \(1<\alpha <\infty\), we have to use a different argument. Note that by (i) and \eqref{eq:energy-diss-sigma}, we have 
    \begin{align*}
        & \bigl\| |u^{\sigma}|^{\alpha+2}\psi_{\sigma}(u^{\sigma}_{xxx}) \bigr\|_{L_{\frac{\alpha+1}{\alpha}}((0,T_{\sigma})\times \Omega)}^{\frac{\alpha+1}{\alpha}}\\
        = & \int_0^{T_{\sigma}} \int_{\Omega} |u^{\sigma}|^{(\alpha+2)\frac{\alpha+1}{\alpha}} \bigl(|u^{\sigma}_{xxx}|^2 + \sigma^2\bigr)^{\frac{\alpha-1}{2}\frac{\alpha+1}{\alpha}}|u^{\sigma}_{xxx}|^{\frac{\alpha+1}{\alpha}} \, dx \, dt \\
        \leq & C \int_{\{|u^{\sigma}_{xxx}| \leq \sigma\}} \bigl(|u^{\sigma}_{xxx}|^2 + \sigma^2\bigr)^{\frac{\alpha-1}{2}\frac{\alpha+1}{\alpha}}|u^{\sigma}_{xxx}|^{\frac{1-\alpha}{\alpha}}|u^{\sigma}_{xxx}|^{2} \, dx \, dt \\
        & \quad + \int_{\{|u^{\sigma}_{xxx}| > \sigma\}} |u^{\sigma}|^{(\alpha+2)\frac{\alpha+1}{\alpha}} \bigl(|u^{\sigma}_{xxx}|^2 + \sigma^2\bigr)^{\frac{\alpha-1}{2}\frac{\alpha+1}{\alpha}}|u^{\sigma}_{xxx}|^{\frac{\alpha+1}{\alpha}} \, dx \, dt \\
         \leq & C T_{\sigma} \sigma^{\alpha+1} + C\int_0^{T_{\sigma}} D_\sigma[u^{\sigma}](t) \, dt\\
        \leq & C\bigl(T_{\sigma} \sigma^{\alpha+1} + E[u_0] \bigr).
    \end{align*}
  
    \noindent\textbf{(iii)} Since $u^\sigma$ is a weak solution to \eqref{eq:PDE_reg}, we have 
    \begin{equation*}
     \int_0^{T_{\sigma}} \langle u^\sigma_t, \varphi \rangle_{W^1_{\alpha+1}(\Omega)}\, dt
     =
     \int_0^{T_{\sigma}} \int_{\Omega} (u^\sigma)^{\alpha+2}\,  \psi_\sigma(u^\sigma_{xxx})\, \varphi_x\, dx\, dt
    \end{equation*}
    for all $\varphi \in L_{\alpha+1}\bigl((0,T_\sigma); 
    W^1_{\alpha +1,B}(\Omega)\bigr)$. Applying Hölder's inequality and  (i),  we obtain
    \begin{align*}
    &
    \left|\int_0^{T_\sigma} \langle u^\sigma_t, \varphi \rangle_{W^1_{\alpha+1}(\Omega)}\, dt
    \right|
    \leq
    \int_0^{T_\sigma} \int_{\Omega} |u^\sigma|^{\alpha+2} \left|\psi_\sigma(u^\sigma_{xxx})\right|\, |\varphi_x|\, dx\, dt
    \\
    &\quad 
    \leq
    \left(\int_0^{T_\sigma} \int_{\Omega} |u^\sigma|^{\alpha+2} |\varphi_x|^{\alpha+1}\, dx\, dt
    \right)^\frac{1}{\alpha+1} \cdot
    \\
    &\quad\quad \cdot
    \left(\int_0^{T_\sigma} \int_{\Omega} |u^\sigma|^{\alpha+2} \bigl(|u^\sigma_{xxx}|^2 + \sigma^2\bigr)^{\frac{\alpha-1}{2}\frac{\alpha+1}{\alpha}} |u^\sigma_{xxx}|^\frac{\alpha+1}{\alpha}\, dx\, dt
    \right)^\frac{\alpha}{\alpha+1}
    \\
    &\quad 
    \leq
    C \|\varphi\|_{L_{\alpha+1}((0,T_{\sigma});W^1_{\alpha+1}(\Omega))}
    \left(\int_0^{T_\sigma} \int_{\Omega} |u^\sigma|^{\alpha+2} \bigl(|u^\sigma_{xxx}|^2 + \sigma^2\bigr)^{\frac{\alpha-1}{2}\frac{\alpha+1}{\alpha}} |u^\sigma_{xxx}|^\frac{\alpha+1}{\alpha}\, dx\, dt
    \right)^\frac{\alpha}{\alpha+1}.
\end{align*}
For $0<\alpha<1$, we obtain similar as in step (ii) that 
\begin{equation*}
  \int_0^{T_\sigma} \int_{\Omega} |u^\sigma|^{\alpha+2} \bigl(|u^\sigma_{xxx}|^2 + \sigma^2\bigr)^{\frac{\alpha-1}{2}\frac{\alpha+1}{\alpha}} |u^\sigma_{xxx}|^\frac{\alpha+1}{\alpha}\, dx\, dt 
   \leq  \int_0^{T_\sigma} D_\sigma[u^\sigma](t) \, dt 
   \leq E[u_0]. 
\end{equation*}
For $1<\alpha < \infty$, we get, similarly as in step (ii), 
\begin{align*}
  \int_0^{T_\sigma} \int_{\Omega} |u^\sigma|^{\alpha+2} \bigl(|u^\sigma_{xxx}|^2 + \sigma^2\bigr)^{\frac{\alpha-1}{2}\frac{\alpha+1}{\alpha}} |u^\sigma_{xxx}|^\frac{\alpha+1}{\alpha}\, dx\, dt 
   &\leq  C T_\sigma \sigma^{\alpha+1} + C \int_0^{T_\sigma} D_\sigma[u^\sigma](t) \, dt 
  \\
  & \leq C\left(T_\sigma \sigma^{\alpha+1} + E[u_0]\right). 
\end{align*}

    \noindent\textbf{(iv)} We prove that $(u_{xxx}^\sigma)_\sigma$ is 
    uniformly bounded in $L_{\alpha+1,\mathrm{loc}}\bigl((0,T_\sigma)\times \Omega\bigr)$. Note that by definition of \(T_{\sigma}\) and continuity of \(u^{\sigma}\), we have \(u^{\sigma}(t,x) > c_\delta > 0\) for all \((t,x) \in [0,T_{\sigma}-\delta)\times \Omega\), for every \( \delta >0 \). 
    
    In the case $1<\alpha<\infty$, we get 
    \begin{align*}
    \int_0^{T_\sigma -\delta} \int_{\Omega} | u_{xxx}^\sigma|^{\alpha+1} \, dx\, dt 
    &\leq \int_0^{T_\sigma-\delta} \int_{\Omega}  \bigl(|u^\sigma_{xxx}|^2 + \sigma^2\bigr)^{\frac{\alpha-1}{2}} | u_{xxx}^\sigma|^2  \, dx\, dt
    \\
    &\leq C \int_0^{T_\sigma -\delta}D_\sigma[u^\sigma](t)\, dt
    \\
    &\leq C E[u_0], 
    \end{align*}
    where the constant $C$ depends also on $\delta$, and where in the last step we used \eqref{eq:energy-diss-sigma}. 
    
    Now we consider the case $0<\alpha<1$. We have 
    \begin{align*}
    \int_0^{T_\sigma-\delta} \int_{\Omega} | u_{xxx}^\sigma|^{\alpha+1} \, dx\, dt 
    & = \int_{\{|u^{\sigma}_{xxx}| \leq \sigma\}} 
    |u_{xxx}^\sigma|^{\alpha+1} \, dx\, dt  + 
    \int_{\{|u^{\sigma}_{xxx}| > \sigma\}} 
    |u_{xxx}^\sigma|^{\alpha+1} \, dx\, dt  
    \\
    &\leq C (T_\sigma-\delta) \sigma^{\alpha+1} + \int_{\{|u^{\sigma}_{xxx}| > \sigma\}} 
    |u_{xxx}^\sigma|^{\alpha+1} \, dx\, dt . 
    \end{align*}
    Using the inequality 
    \begin{align*}
    |x|^{\alpha+1} = \left( \tfrac{1}{2} |x|^2 + \tfrac{1}{2} |x|^2
    \right)^{\frac{\alpha-1}{2}} |x|^2 
    \leq \left(\tfrac{1}{2} \right)^{\frac{\alpha-1}{2}} 
    \left( |x|^2  + \sigma^2\right)^{\frac{\alpha-1}{2}}  |x|^2, 
    \qquad |x|>\sigma, \; x \in \R, 
    \end{align*}
    we obtain 
   \begin{align*}
    \int_0^{T_\sigma -\delta} \int_{\Omega} | u_{xxx}^\sigma|^{\alpha+1} \, dx\, dt 
    &\leq C (T_\sigma-\delta) \sigma^{\alpha+1} 
    + C  \int_0^{T_\sigma -\delta} \int_{\Omega} 
    \bigl(|u^\sigma_{xxx}|^2 + \sigma^2\bigr)^{\frac{\alpha-1}{2}} | u_{xxx}^\sigma|^2  \, dx\, dt
    \\
    &\leq C (T_\sigma-\delta)  \sigma^{\alpha+1}  
    + C \int_0^{T_{\sigma}-\delta} D_\sigma[u^\sigma](t) \, dt
    \\
    &\leq C \left((T_\sigma-\delta) \sigma^{\alpha+1}  
    +  E[u_0]\right) 
    \end{align*}
    with $C$ depending also on $\delta$. In the last step we used again \eqref{eq:energy-diss-sigma}. 
    
    \noindent\textbf{(v)} As observed in (i), \(u^{\sigma}\) is uniformly bounded in \(L_{\infty}\bigl((0,T_{\sigma});L_{\infty}(\Omega)\bigr)\), and hence also in \(L_{\alpha+1}\bigl((0,T_{\sigma})\times \Omega\bigr)\). From (iv), we also know that \(u_{xxx}^{\sigma}\) is uniformly bounded in \(L_{\alpha+1,\mathrm{loc}}\bigl((0,T_{\sigma})\times \Omega\bigr)\). Combining this, we find that \(u^{\sigma}\) is uniformly bounded in \(L_{\alpha+1,\mathrm{loc}}\bigl((0,T_{\sigma});W_{\alpha+1,B}^3(\Omega)\bigr)\) by interpolation.
    
    \noindent\textbf{(vi)} This follows as in (iii) using a duality argument.
\end{proof}

%-------------------------------------------------------
%-------------------------------------------------------
\bigskip

\subsection{Proof of Theorem {\ref{thm:Local_Ex_PL}}: Local Existence of Positive Weak Solutions to the Original Problem} \label{ssec:Existence_PL}

In this section we pass to the limit of a vanishing regularisation parameter $\sigma \searrow 0$. Using the uniform bounds provided in Lemma \ref{lem:uniform_bounds}, we show that the family $(u^\sigma)_\sigma$ admits an accumulation point that is a positive weak solution to the original problem \eqref{eq:PDE}. As usual, we use Minty's trick in order to identify the (nonlinear) limit flux.

%-----------------------------------------------------
%-----------------------------------------------------

\begin{lemma}[Convergence of approximations] \label{lem:convergence}
Let $u^\sigma$ be the maximal solution to \eqref{eq:PDE_reg} for a fixed $\sigma \in (0,1)$ and a positive initial value $u_0 \in W^{4\rho}_{p,B}(\Omega)$ such that $u_0(x) > 0\,$ for all $x \in \bar{\Omega}$.
Then the following holds true. There are a positive time $T > 0$ and a subsequence $(u^\sigma)_\sigma$ (not relabelled) such that, as $\sigma \searrow 0$, we have convergence in the following sense:\\[-0.3cm]
\begin{itemize}
	\item[(i)] $u^\sigma \to u$ strongly in $C\bigl([0,T];C^{\rho}(\bar{\Omega})\bigr)$;\\[-0.3cm] %\cap L_{\alpha+1}\bigl((0,T);C^{2+\sigma}(S^1)\bigr)$ for all $\rho \in [0,1/2)$ and $\sigma \in [0,\alpha/(\alpha+1))$;
	\item[(ii)] $\left|u^\sigma\right|^{\alpha+2} \psi_\sigma(u^\sigma_{xxx}) \rightharpoonup \chi$ weakly in $L_\frac{\alpha+1}{\alpha}\bigl((0,T)\times \Omega\bigr)$ for some limit function $\chi$;\\[-0.3cm]
	\item[(iii)] $u^\sigma_t \rightharpoonup u_t$ weakly in 	$L_\frac{\alpha+1}{\alpha}\bigl((0,T); (W^1_{\alpha+1,B}(\Omega))'\bigr)$;\\[-0.3cm]
	\item[(iv)] $u^\sigma_{xxx} \rightharpoonup u_{xxx}$ weakly in $L_{\alpha+1}\bigl((0,T)\times \Omega\bigr)$;\\[-0.3cm]
	\item[(v)] $(u^\sigma_x)_t \rightharpoonup u_{xt}$ weakly in $L_\frac{\alpha+1}{\alpha}\bigl((0,T);\bigl(W^1_{\alpha+1,0}(\Omega)\cap W^2_{\alpha+1}(\Omega)\bigr)'\bigr)$.
\end{itemize}
\end{lemma}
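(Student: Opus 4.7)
The plan is to extract convergent subsequences from the six uniform bounds provided by Lemma \ref{lem:uniform_bounds} via a standard compactness argument, and then identify the weak limits.

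First I would establish the strong convergence (i) via the Aubin--Lions--Simon theorem \cite{simon_compact_1986}. From Lemma \ref{lem:uniform_bounds}\,(i) the family $(u^\sigma)_\sigma$ is uniformly bounded in $L_\infty\bigl((0,T);H^1(\Omega)\bigr)$, and from (iii) the time derivatives $(u^\sigma_t)_\sigma$ are uniformly bounded in $L_{(\alpha+1)/\alpha}\bigl((0,T);(W^1_{\alpha+1,B}(\Omega))'\bigr)$. Since in one space dimension the embedding $H^1(\Omega) \hookrightarrow C^\rho(\bar\Omega)$ is compact for every $\rho \in (0,1/2)$, and $C^\rho(\bar\Omega) \hookrightarrow (W^1_{\alpha+1,B}(\Omega))'$ continuously, the Aubin--Lions--Simon lemma yields a subsequence and a limit $u$ such that $u^\sigma \to u$ strongly in $C\bigl([0,T];C^\rho(\bar\Omega)\bigr)$. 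This is the main obstacle: one must check that the target space $C^\rho$ has the right compactness/continuity relation with respect to the bounds, and in particular that the auxiliary space $(W^1_{\alpha+1,B}(\Omega))'$ sits correctly in the chain $H^1(\Omega) \hookrightarrow\hookrightarrow C^\rho(\bar\Omega) \hookrightarrow (W^1_{\alpha+1,B}(\Omega))'$. Positivity of $u$ on $[0,T]$ (for $T$ small enough) then follows from the uniform positivity of $u_0$ together with the strong convergence.

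For the weak convergences I would exploit reflexivity of the respective Banach spaces and extract further subsequences (without relabeling). The uniform bound in Lemma \ref{lem:uniform_bounds}\,(ii) gives weak convergence of $|u^\sigma|^{\alpha+2}\psi_\sigma(u^\sigma_{xxx})$ in $L_{(\alpha+1)/\alpha}\bigl((0,T)\times\Omega\bigr)$ to some limit $\chi$, which proves (ii). The bound in (iii) of Lemma \ref{lem:uniform_bounds}, together with the already established convergence $u^\sigma \to u$ (which in particular holds distributionally in time), yields $u^\sigma_t \rightharpoonup u_t$ in $L_{(\alpha+1)/\alpha}\bigl((0,T);(W^1_{\alpha+1,B}(\Omega))'\bigr)$, giving (iii). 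Similarly, (iv) follows from Lemma \ref{lem:uniform_bounds}\,(iv) combined with the distributional identification of the limit: weak convergence of $u^\sigma_{xxx}$ in $L_{\alpha+1}\bigl((0,T)\times\Omega\bigr)$, whose limit must coincide with $u_{xxx}$ since $u^\sigma \to u$ strongly in $C([0,T];C^\rho(\bar\Omega))$ and hence distributionally. The final statement (v) is obtained analogously from Lemma \ref{lem:uniform_bounds}\,(vi), identifying the limit of $(u^\sigma_x)_t$ with $u_{xt}$ through the distributional derivative.

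The only delicate point is to make the chain of embeddings for Aubin--Lions--Simon explicit and to verify that the identifications of weak limits are consistent with the strong limit $u$. Note that identification of the limit flux $\chi$ with $u^{\alpha+2}\psi(u_{xxx})$ is \emph{not} part of this lemma; that requires Minty's monotonicity trick and is deferred to the actual proof of Theorem \ref{thm:Local_Ex_PL}.
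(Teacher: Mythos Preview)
Your proposal is correct and follows essentially the same approach as the paper: Aubin--Lions--Simon (via \cite[Cor.~4]{simon_compact_1986}) with the chain $H^1(\Omega)\hookrightarrow\hookrightarrow C^\rho(\bar\Omega)\hookrightarrow (W^1_{\alpha+1,B}(\Omega))'$ for (i), weak sequential compactness from the respective uniform bounds for (ii)--(v), and identification of the weak limits through the already established strong/distributional convergence $u^\sigma\to u$. The only cosmetic difference is that in (iv) the paper routes the argument through Lemma~\ref{lem:uniform_bounds}\,(v) (the full $W^3_{\alpha+1,B}$ bound) rather than directly through (iv), but the outcome and reasoning are the same.
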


Since the proof of this lemma differs only very slightly from that in \cite{ansini_doubly_2004,lienstromberg_analysis_2022,lienstromberg_long-time_2022}, we shift it to the appendix.

%-------------------------------------------------------
%-------------------------------------------------------

We are left to prove the convergence of the nonlinear flux term
$\bigl(\left|u^\sigma\right|^{\alpha+2} \psi_\sigma(u^\sigma_{xxx}\bigr)\bigr) \rightharpoonup \bigl(\left|u\right|^{\alpha+2} \psi(u_{xxx})\bigr)$ in $L_\frac{\alpha+1}{\alpha}\bigl((0,T)\times \Omega\bigr)$. This is done in the next lemma the proof of which is based on the monotonicity of the regularisation and  Minty's trick.

%-------------------------------------------------------
%-------------------------------------------------------

\begin{lemma}\label{lem:limit_flux}
Given $\sigma \in (0,1)$, let $u^\sigma$ be the maximal solution to \eqref{eq:PDE_reg}, corresponding to an initial value $u_0 \in W^{4\rho}_{\alpha+1,B}(\Omega)$. Then there exists a subsequence $(u^\sigma)_\sigma$ (not relabelled) such that
\begin{equation*}
	\left|u^\sigma\right|^{\alpha+2} \psi_\sigma(u^\sigma_{xxx})\ 
	\rightharpoonup
	|u|^{\alpha+2} \psi(u_{xxx}) 
	\quad
	\text{weakly in }
	L_\frac{\alpha+1}{\alpha}\bigl((0,T)\times \Omega\bigr)
\end{equation*}
as $\sigma \searrow 0$.
\end{lemma}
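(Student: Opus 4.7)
The plan is to apply Minty's monotonicity trick. A direct computation yields $\psi_\sigma'(s) = (s^2+\sigma^2)^{(\alpha-3)/2}(\alpha s^2 + \sigma^2) > 0$ for all $\alpha > 0$ and $\sigma \in (0,1)$, so $\psi_\sigma$ is strictly increasing on $\R$, and consequently the map $v \mapsto |u^\sigma|^{\alpha+2}\psi_\sigma(v)$ is pointwise monotone. The starting inequality of the argument is therefore
\begin{equation*}
    0 \leq I_\sigma := \int_0^T \int_\Omega |u^\sigma|^{\alpha+2}\bigl[\psi_\sigma(u^\sigma_{xxx}) - \psi_\sigma(v_{xxx})\bigr](u^\sigma_{xxx} - v_{xxx})\, dx\, dt,
\end{equation*}
valid for every $v \in L_{\alpha+1}\bigl((0,T); W^3_{\alpha+1,B}(\Omega)\bigr)$. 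The goal is to pass to the limit $\sigma \searrow 0$ term-by-term after expanding this inequality and derive
\begin{equation*}
    0 \leq \int_0^T \int_\Omega \bigl[\chi - |u|^{\alpha+2}\psi(v_{xxx})\bigr](u_{xxx} - v_{xxx})\, dx\, dt.
\end{equation*}

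The diagonal contribution $\int_0^T \int_\Omega |u^\sigma|^{\alpha+2}\psi_\sigma(u^\sigma_{xxx}) u^\sigma_{xxx}\, dx\, dt = \int_0^T D_\sigma[u^\sigma]\, dt = E[u_0] - E[u^\sigma](T)$ is controlled via Theorem \ref{thm:Local_Ex_PL_reg}(iii). Using Lemma \ref{lem:convergence}(i) together with Lemma \ref{lem:uniform_bounds}(i), one has $u^\sigma_x(T) \rightharpoonup u_x(T)$ weakly in $L_2(\Omega)$, so weak lower semicontinuity of the $L_2$-norm yields $\limsup_\sigma \int_0^T D_\sigma[u^\sigma]\, dt \leq E[u_0] - E[u](T)$. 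The cross term $\int |u^\sigma|^{\alpha+2}\psi_\sigma(u^\sigma_{xxx}) v_{xxx}\, dx\, dt$ converges to $\int \chi v_{xxx}\, dx\, dt$ directly by Lemma \ref{lem:convergence}(ii); the remaining two terms involve $\psi_\sigma(v_{xxx})$, for which a case distinction between $0<\alpha\leq 1$ (with bound $|\psi_\sigma(s)| \leq |s|^\alpha$) and $\alpha \geq 1$ (with bound $|\psi_\sigma(s)| \leq C_\alpha(|s|^\alpha + |s|)$ uniform in $\sigma \in (0,1)$) supplies an integrable dominating function. Combined with the uniform convergence $u^\sigma \to u$ in $C([0,T]; C^\rho(\bar\Omega))$ and the pointwise convergence $\psi_\sigma \to \psi$, dominated convergence gives strong convergence $|u^\sigma|^{\alpha+2}\psi_\sigma(v_{xxx}) \to |u|^{\alpha+2}\psi(v_{xxx})$ in $L_{(\alpha+1)/\alpha}((0,T)\times\Omega)$, which pairs with the weak convergence $u^\sigma_{xxx} \rightharpoonup u_{xxx}$ from Lemma \ref{lem:convergence}(iv) to close these two terms. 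It remains to identify $E[u_0] - E[u](T)$ with $\int \chi u_{xxx}\, dx\, dt$: passing to the limit in the regularised weak formulation via Lemma \ref{lem:convergence}(iii) yields $\int_0^T \langle u_t, \phi\rangle\, dt = \int_0^T \int_\Omega \chi \phi_x\, dx\, dt$ for admissible $\phi$, and testing with the admissible choice $\phi = -u_{xx}$, integrating by parts in space, and applying the standard chain rule in the Gelfand triple $W^1_{\alpha+1,0}(\Omega)\cap W^2_{\alpha+1}(\Omega) \hookrightarrow L_2(\Omega) \hookrightarrow \bigl(W^1_{\alpha+1,0}(\Omega)\cap W^2_{\alpha+1}(\Omega)\bigr)'$ (applied to $u_x$ using Lemma \ref{lem:convergence}(v)) produces $E[u](T) - E[u_0] = -\int \chi u_{xxx}\, dx\, dt$.

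Once the displayed Minty inequality is established for arbitrary admissible $v$, I would set $v = u + \lambda w$ with $w \in L_{\alpha+1}\bigl((0,T); W^3_{\alpha+1,B}(\Omega)\bigr)$, divide by $\pm\lambda$, and let $\lambda \to 0^\pm$; continuity of $\psi$ and dominated convergence (with domination $C_\alpha\|u\|_\infty^{\alpha+2}(|u_{xxx}|^\alpha + |w_{xxx}|^\alpha)|w_{xxx}|$ for $|\lambda|\leq 1$) produce
\begin{equation*}
    \int_0^T \int_\Omega \bigl[\chi - |u|^{\alpha+2}\psi(u_{xxx})\bigr] w_{xxx}\, dx\, dt = 0
\end{equation*}
for every such $w$. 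A triple antidifferentiation with integral correction constants shows that $\{w_{xxx} : w \in W^3_{\alpha+1,B}(\Omega)\} \supseteq C^\infty_c(\Omega)$, so combining with a tensor-product choice $w(t,x) = \eta(t)\tilde w(x)$ and density of such products in $L_{\alpha+1}((0,T)\times\Omega)$, the identity forces $\chi = |u|^{\alpha+2}\psi(u_{xxx})$ almost everywhere, which is the claim. The main obstacle I anticipate is the chain-rule step for the limit equation, which requires a careful Gelfand-triple setup under the weaker regularity surviving the vanishing-regularisation limit; a secondary delicate point is ensuring the uniform domination of $\psi_\sigma$ when $\alpha > 1$, which is only uniform because $\sigma$ is confined to $(0,1)$.
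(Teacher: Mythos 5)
Your proof is correct and follows essentially the same Minty's-trick route as the paper: establish weak compactness of the flux to get $\chi$, exploit the pointwise monotonicity of $\psi_\sigma$ (hence of $v \mapsto |u^\sigma|^{\alpha+2}\psi_\sigma(v)$), identify the pairing $\langle\chi\,|\,u_{xxx}\rangle$ with the energy drop $E[u_0]-E[u](T)$, and close by taking $\phi = u \pm \lambda w$. The only cosmetic deviations are that you obtain the diagonal term via a $\limsup$ plus weak lower semicontinuity of the $H^1$-seminorm (which is slightly more conservative than the paper's claim of strong $H^1(\Omega)$-convergence a.e. in time, and in fact avoids a point the paper glosses over), and that you derive the identity $\langle\chi\,|\,u_{xxx}\rangle = E[u_0]-E[u](T)$ by passing to the limit in the weak formulation first and then testing with $-u_{xx}$ via the Gelfand-triple chain rule, whereas the paper tests the regularised equation with $u_{xx}$ and then passes to the limit; these two orders of operation are equivalent and both ultimately rely on the same chain-rule regularity established in Lemma \ref{lem:convergence}(v).
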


The proof of the above stated lemma uses the same arguments as the one in \cite{lienstromberg_analysis_2022}. For the sake of completeness, we include it in the appendix. 

%-----------------------------------------------------
%-----------------------------------------------------

\begin{remark}
\label{Rem:cont_H1}
Note that the limit $u$ is bounded in $C\bigl([0,T];H^1(\Omega)\bigr)$. Indeed, from Lemma \ref{lem:convergence} (i) we already know that
\begin{equation*}
    u \in C\bigl([0,T];C^\rho(\bar{\Omega})\bigr) 
    \hookrightarrow
    C\bigl([0,T];L_2(\Omega)\bigr).
\end{equation*}
Furthermore,
\begin{equation*}
    u_x \in L_{\alpha+1}\bigl((0,T);W^1_{\alpha+1,0}(\Omega) \cap W^2_{\alpha+1}(\Omega)\bigr) \quad \text{and} \quad
	u_{xt}\in L_\frac{\alpha+1}{\alpha}\bigl((0,T);\bigl(W^1_{\alpha+1,0}(\Omega)\cap W^2_{\alpha+1}(\Omega)\bigr)'\bigr)
\end{equation*}
due to Lemma \ref{lem:convergence} (iv) and (v) and lower semicontinuity of the norm. Using \cite[Remark 3.4]{bernis_existence_1988}, this yields that $u_x \in C\bigl([0,T];L_2(\Omega)\bigr)$. Therefore, $u \in C\bigl([0,T];H^1(\Omega)\bigr)$.
\end{remark}

\begin{proof}[\textbf{Proof of Theorem \ref{thm:Local_Ex_PL}}]
 \noindent\textbf{(i)} We first show that the limit $u$ is 
bounded away from zero on $[0,T] \times \bar{\Omega}$. 
This follows immediately from the positivity of $u^\sigma$ on 
$[0,T_\sigma)\times \bar{\Omega}$ and the convergence in 
Lemma \eqref{lem:convergence} (i). 

 \noindent\textbf{(ii)} Thanks to Lemma \ref{lem:uniform_bounds} (iii) and (iv) and Remark 
 \ref{Rem:cont_H1} above, we obtain the regularity properties 
 \[
 u \in C\bigl([0,T]; H^1(\Omega)\bigr) \cap L_{\alpha+1}\bigl((0,T); W^3_{\alpha+1,B}(\Omega)\bigr)
 \quad \text{and} \quad u_t \in L_{\frac{\alpha+1}{\alpha}}\bigl(
 (0,T); (W^1_{\alpha+1,B}(\Omega))'\bigr). 
 \]
 
  \noindent\textbf{(iii)} We now prove that $u$ satisfies the weak integral formulation in Definition \ref{def:weak_sol}. 
  To do so, note that for solutions to the regularised problem \eqref{eq:PDE_reg} we have that 
  \[
  \int_0^T \langle u_t^\sigma ,\phi\rangle_{W^1_{\alpha+1}(\Omega)}\, dt
        =
        \int_0^T \int_\Omega |u^\sigma|^{\alpha+2} \psi_{\sigma}(u^\sigma_{xxx})\, \phi_x\, dx\, dt
  \]
 for all test functions $\phi \in L_{\alpha+1}\bigl((0,T); W^1_{\alpha+1,B}(\Omega)\bigr)$. On the one hand, since $\phi_x \in L_{\alpha+1}\bigl((0,T)\times \Omega)\bigr)$, it follows from Lemma 
 \ref{lem:limit_flux} that 
   \[
  \int_0^T \langle u_t^\sigma ,\phi\rangle_{W^1_{\alpha+1}(\Omega)}\, dt
        \longrightarrow
        \int_0^T \int_\Omega |u|^{\alpha+2} \psi(u_{xxx})\, \phi_x\, dx\, dt.
  \]
 On the other hand, Lemma \ref{lem:uniform_bounds} (iii) gives 
\[
  \int_0^T \langle u_t^\sigma ,\phi\rangle_{W^1_{\alpha+1}(\Omega)}\, dt
        \longrightarrow
        \int_0^T \langle u_t ,\phi\rangle_{W^1_{\alpha+1}(\Omega)}\, dt. 
\]
Combining both, we then find that $u$ satisfies the desired integral identity
 \[
  \int_0^T \langle u_t ,\phi\rangle_{W^1_{\alpha+1}(\Omega)}\, dt
        =
        \int_0^T \int_\Omega u^{\alpha+2} \psi(u^\sigma_{xxx})\, \phi_x\, dx\, dt
  \]
 for all $\phi \in L_{\alpha+1}\bigl((0,T); W^1_{\alpha+1,B}(\Omega)\bigr)$. 
 
  \noindent\textbf{(iv)} By Lemma \ref{lem:convergence} (i) the initial condition is satisfied in the limit. That the first boundary condition in $\eqref{eq:PDE}_2$ is fulfilled by $u$ follows from Lemma \ref{lem:convergence} (v). 
  
 \noindent\textbf{(v)} This follows from the conservation of mass property 
 \[
 \int_{\Omega} u^\sigma (t)\, dx = \int_{\Omega} u_0\, dx,
\quad t\in [0,T_\sigma),
 \]
 for the approximation $u^\sigma$ (see Theorem \ref{thm:Local_Ex_PL_reg} (ii)) and the convergence in Lemma \ref{lem:convergence} (i). 
 
 \noindent\textbf{(vi)} In Lemma \ref{lem:limit_flux} we have already shown that the solution $u$ to the original problem 
 \eqref{eq:PDE} satisfies the energy-dissipation identity for almost every $t\in [0,T]$. 
\end{proof}

%-----------------------------------------------------
%-----------------------------------------------------
\bigskip

\section{Differential Inequality for the Energy and Regularity Estimates}\label{sec:regularity-estimates}

The content of this section is twofold. First, we derive a differential inequality of {\L}ojasiewicz--Simon type for the energy functional $E$ which is valid as long as the weak solution to \eqref{eq:PDE} remains bounded away from zero. Then, we derive $L_1$-in-time regularity estimates for the weak solution to \eqref{eq:PDE}. The results are the same as in the cylindrical Taylor--Couette setting in \cite{lienstromberg_analysis_2022,lienstromberg_long-time_2022}. However, since the present paper deals with the flat case, the proofs cannot rely on Fourier analysis. 

\begin{proposition}\label{prop:ED_estimate}
 Fix $\alpha > 0$ and a positive initial value $u_0\in H^1(\Omega)$ with \(u_0(x) > 0\) for \(x\in \bar{\Omega}\). Let $u \in C\bigl([0,T];H^1(\Omega)\bigr) 
    \cap
    L_{\alpha+1}\bigl((0,T);W^3_{\alpha+1,B}(\Omega)\bigr)$
    with 
    $u_t \in L_\frac{\alpha+1}{\alpha}\bigl((0,T);(W^1_{\alpha+1,B}(\Omega))'\bigr)$
    be a weak solution to \eqref{eq:PDE} with initial value $u_0$, as obtained in Theorem \ref{thm:Local_Ex_PL}. Let \(m = \min_{(t,x)\in [0,T] \times\bar{\Omega}}u(t,x) > 0\).  Then there is a constant \(C = C_{\alpha,\Omega,m}>0\) such that
\begin{equation}
    \frac{d}{dt} E[u](t) = - D[u](t) \leq -C \bigl(E[u](t)\bigr)^{\frac{\alpha+1}{2}}
\end{equation}
for almost every $t\in [0,T]$.
\end{proposition}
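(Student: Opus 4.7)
The first equality is simply the energy-dissipation identity \eqref{eq:energy-dissipation} from Theorem \ref{thm:Local_Ex_PL}(iii), so the task reduces to establishing the estimate
\begin{equation*}
D[u](t) \;\geq\; C\bigl(E[u](t)\bigr)^{\frac{\alpha+1}{2}}
\end{equation*}
for almost every $t\in[0,T]$ with a constant depending only on $\alpha$, $\Omega$ and the positive lower bound $m$. Since $u(t,x)\ge m>0$ on $[0,T]\times\bar{\Omega}$ we immediately obtain
\begin{equation*}
D[u](t)\;=\;\int_\Omega u^{\alpha+2}\,|u_{xxx}|^{\alpha+1}\,dx \;\geq\; m^{\alpha+2}\,\|u_{xxx}(t)\|_{L_{\alpha+1}(\Omega)}^{\alpha+1}.
\end{equation*}
Thus the proposition will follow from the Poincaré-type inequality
\begin{equation*}
\|v\|_{L_2(\Omega)}^{\alpha+1}\;\leq\; C\,\|v''\|_{L_{\alpha+1}(\Omega)}^{\alpha+1}
\qquad\text{for every } v\in W^{2}_{\alpha+1}(\Omega) \text{ with } v=0 \text{ on } \partial\Omega,
\end{equation*}
applied to $v=u_x(t,\cdot)$; recall that the weak solution satisfies $u_x(t,\cdot)=0$ on $\partial\Omega$ pointwise for a.e.\ $t$.

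To prove this Poincaré-type estimate, I would use a one-dimensional Rolle-type argument. Since $\Omega=(a,b)$ and $v(a)=v(b)=0$, there exists $x_0\in\Omega$ with $v'(x_0)=0$. Consequently, writing $v'(x)=\int_{x_0}^{x}v''(y)\,dy$ and applying H\"older's inequality yields
\begin{equation*}
\|v'\|_{L_\infty(\Omega)}\;\leq\;|\Omega|^{\alpha/(\alpha+1)}\,\|v''\|_{L_{\alpha+1}(\Omega)}.
\end{equation*}
Integrating once more from the boundary (where $v$ vanishes) gives $\|v\|_{L_\infty(\Omega)}\leq|\Omega|\,\|v'\|_{L_\infty(\Omega)}$ and hence $\|v\|_{L_2(\Omega)}\leq|\Omega|^{1/2}\|v\|_{L_\infty(\Omega)}\leq C_\Omega\,\|v''\|_{L_{\alpha+1}(\Omega)}$. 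Raising to the power $\alpha+1$ yields the required inequality.

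Combining the two estimates gives
\begin{equation*}
D[u](t)\;\geq\;m^{\alpha+2}\,\|u_{xxx}(t)\|_{L_{\alpha+1}(\Omega)}^{\alpha+1}\;\geq\;C_{\alpha,\Omega}\,m^{\alpha+2}\,\|u_x(t)\|_{L_2(\Omega)}^{\alpha+1}\;=\;C_{\alpha,\Omega,m}\bigl(E[u](t)\bigr)^{\frac{\alpha+1}{2}},
\end{equation*}
which is the desired Łojasiewicz--Simon-type inequality. I do not anticipate a significant obstacle: the only subtle point is ensuring that the Rolle/Hölder chain is valid under the available regularity $u_x(t,\cdot)\in W^2_{\alpha+1}(\Omega)$ with a pointwise trace $u_x(t,\cdot)|_{\partial\Omega}=0$, which is guaranteed by the Sobolev embedding $W^2_{\alpha+1}(\Omega)\hookrightarrow C^1(\bar\Omega)$ in one space dimension and by Theorem \ref{thm:Local_Ex_PL}.
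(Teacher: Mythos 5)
Your proof is correct, but the route to the crucial Poincar\'e-type bound $\|u_x\|_{L_2(\Omega)}\leq C\|u_{xxx}\|_{L_{\alpha+1}(\Omega)}$ is genuinely different from the paper's. The paper proves (in Lemma \ref{Lem:Poincare}) the equivalent statement $E[v]\leq C\|v_{xxx}\|_{L_{\alpha+1}(\Omega)}^2$ by distinguishing three cases: for $\alpha>1$ it applies Poincar\'e twice and then Jensen's inequality for the concave map $s\mapsto s^{2/(\alpha+1)}$, and for $\alpha<1$ it views $v$ as a weak solution of a Bi-Laplace Neumann problem, tests with $v$ itself, and combines H\"older with Sobolev embeddings to close the estimate. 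Your argument instead works directly on $w=u_x$, which vanishes at both endpoints of the interval; a Rolle argument produces a zero of $w'$, after which a H\"older estimate on $w'(x)=\int_{x_0}^x w''$ and a subsequent integration from the boundary give $\|w\|_{L_2}\leq C_\Omega\|w''\|_{L_{\alpha+1}}$ with a concrete constant. This chain is uniform in $\alpha>0$ (no case distinction), requires only the Dirichlet trace $u_x|_{\partial\Omega}=0$ and the 1D embedding $W^2_{\alpha+1}(\Omega)\hookrightarrow C^1(\bar\Omega)$, and avoids both Jensen's inequality and the Bi-Laplace duality argument; on the other hand it exploits the one-dimensionality more heavily than the paper's argument, which (at least in the $\alpha<1$ case) is phrased in a way that could in principle generalize to higher dimensions. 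Either way, once the Poincar\'e bound is in hand the remaining reduction via $u\geq m$ and the energy-dissipation identity is identical to the paper's.
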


%-----------------------------------------------------
%-----------------------------------------------------

The proof of Proposition \ref{prop:ED_estimate} is based on the following crucial Poincaré estimate. It is worthwhile to emphasise that this estimate is valid in both the shear-thinning case and the shear-thickening case.
%-----------------------------------------------------
%-----------------------------------------------------

\begin{lemma}\label{Lem:Poincare}
    Fix \(\alpha>0\) and let \(v\in H^1(\Omega) \cap W^3_{\alpha+1,B}(\Omega)\) with \(\bar{v} = 0\) and \(v_x(x) = 0\) for \(x\in \partial\Omega\). Then there exists a constant \(C=C_{\alpha,\Omega}>0 \) such that
        \[E[v] \leq C \|v_{xxx}\|_{L_{\alpha+1}(\Omega)}^2.\]
\end{lemma}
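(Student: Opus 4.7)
The plan is to bound $E[v] = \tfrac{1}{2}\|v_x\|_{L_2(\Omega)}^2$ by $\|v_{xxx}\|_{L_{\alpha+1}(\Omega)}^2$ through a short chain of elementary one-dimensional Poincar\'e-type estimates, leveraging the boundary condition $v_x = 0$ on $\partial\Omega$ twice: once to bound $v_x$ pointwise in terms of an antiderivative of $v_{xx}$, and once to deduce that $v_{xx}$ itself has vanishing mean.

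Writing $\Omega = (a,b)$, I would first use $v_x(a) = 0$ and the fundamental theorem of calculus to write
\[
v_x(x) = \int_a^x v_{xx}(y)\,dy, \qquad x \in \Omega,
\]
which yields the pointwise estimate $\|v_x\|_{L_\infty(\Omega)} \leq \|v_{xx}\|_{L_1(\Omega)}$, and hence
\[
E[v] = \tfrac{1}{2}\|v_x\|_{L_2(\Omega)}^2 \leq \tfrac{|\Omega|}{2}\,\|v_{xx}\|_{L_1(\Omega)}^2.
\]
The same boundary condition also gives $\int_\Omega v_{xx}\,dx = v_x(b) - v_x(a) = 0$, so $v_{xx}$ is a mean-zero element of $W^1_{\alpha+1}(\Omega)$.

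Next, I would invoke the standard Poincar\'e--Wirtinger inequality in $L_{\alpha+1}(\Omega)$ for mean-zero $W^1_{\alpha+1}$ functions,
\[
\|v_{xx}\|_{L_{\alpha+1}(\Omega)} \leq C_{\alpha,\Omega}\,\|v_{xxx}\|_{L_{\alpha+1}(\Omega)},
\]
together with H\"older's inequality, which gives
\[
\|v_{xx}\|_{L_1(\Omega)} \leq |\Omega|^{\alpha/(\alpha+1)}\,\|v_{xx}\|_{L_{\alpha+1}(\Omega)}.
\]
Chaining the three displayed inequalities produces the claimed bound with a constant depending only on $\alpha$ and $|\Omega|$.

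The argument is elementary, and I do not anticipate a serious obstacle. The only subtlety worth flagging is that routing the estimate through $L_1(\Omega)$ rather than through $L_2(\Omega)$ is what allows the inequality to cover the full range $\alpha > 0$: in the shear-thickening regime $\alpha < 1$, the space $L_{\alpha+1}(\Omega)$ is strictly larger than $L_2(\Omega)$, so a direct $L_2$--$L_2$ chain (via integration by parts and Poincar\'e--Wirtinger in $L_2$) would not close, whereas $L_{\alpha+1}(\Omega) \hookrightarrow L_1(\Omega)$ holds for every $\alpha>0$ on the bounded interval $\Omega$. Note that the assumption $\bar v = 0$ stated in the hypothesis is not actually needed in the argument—the estimate holds under the boundary condition alone—but it is the relevant normalisation for the later application to $v = u - \bar u$.
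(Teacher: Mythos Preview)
Your argument is correct and, in fact, cleaner than the paper's own proof. The paper splits into three cases: for $\alpha=1$ it applies Poincar\'e directly; for $\alpha>1$ it bounds $\|v_x\|_{L_2}$ by $\|v_{xx}\|_{L_2}$, then $\|v_{xx}\|_{L_2}$ by $\|v_{xxx}\|_{L_2}$ via Poincar\'e--Wirtinger (using the zero mean of $v_{xx}$), and finally passes from $L_2$ to $L_{\alpha+1}$ using Jensen's inequality for the concave map $s\mapsto s^{2/(\alpha+1)}$; for $\alpha<1$ Jensen is unavailable, so the paper instead tests the weak formulation of the bi-Laplace problem $v_{xxxx}=w_x$, $w=v_{xxx}$, against $v$ itself and combines H\"older with the Sobolev embedding $W^1_{\alpha+1}\hookrightarrow C(\bar\Omega)$ to reach $\|v_{xx}\|_{L_2}\le C\|v_{xxx}\|_{L_{\alpha+1}}$.

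Your route---fundamental theorem of calculus to get $\|v_x\|_{L_\infty}\le\|v_{xx}\|_{L_1}$, then H\"older $L_1\le C\,L_{\alpha+1}$, then Poincar\'e--Wirtinger in $L_{\alpha+1}$---avoids the case distinction entirely and uses only the most basic one-dimensional tools. The key observation you flag, that detouring through $L_1$ rather than $L_2$ is what makes the argument uniform in $\alpha$, is exactly the point: the paper's $L_2$ chain breaks at $\alpha<1$ and has to be rescued by the bi-Laplace trick, whereas your $L_1$ chain never sees the obstruction. Your remark that $\bar v=0$ is unused is also correct (the paper does not use it either; only the zero mean of $v_{xx}$, which follows from the boundary condition, is needed). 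The trade-off is that the paper's case-by-case argument is closer in spirit to what one would do in higher dimensions, where the pointwise $L_\infty$--$L_1$ bound from the fundamental theorem of calculus is not available; but in the one-dimensional setting of this paper your proof is strictly more economical.
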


%-----------------------------------------------------
%-----------------------------------------------------

\begin{proof}
We distinguish the cases $\alpha = 1, \alpha > 1$ and $\alpha < 1$.

\noindent\textbf{The case $\alpha=1$.} This is just a direct application of Poincaré's inequality.
    
\noindent\textbf{The case $\alpha>1$.} Define \(w = v_{xx} \in W^1_{\alpha+1}(\Omega) \subset L_2(\Omega)\). Observe that \(v\) is a weak solution to the Neumann boundary-value problem given by
    \begin{equation*}
    \begin{cases}
        v_{xx}
        =
        w,
        &
        x\in \Omega,
        \\
        v_x 
        =
        0,
        &
        x\in \partial\Omega.
    \end{cases}
    \end{equation*}
    Hence, we obtain the estimate \( \|v_x\|_{L_2(\Omega)} \leq C \|w\|_{L_2(\Omega)} \). Furthermore, note that \(\bar{w}=0\). Using this, applying Poincaré's inequality and then Jensen's inequality for the concave function \( s \mapsto s^{2/(\alpha+1)}\), \( s\in 
    (0,\infty) \), we find that
    \begin{align*}
        E[v] = \frac{1}{2} \|v_x\|_{L_2(\Omega)}^2 \leq C\|w\|_{L_2(\Omega)}^2 \leq C\|w_x\|_{L_2(\Omega)}^2 = C \int_{\Omega} |v_{xxx}|^{(\alpha+1)\frac{2}{\alpha+1}} \, dx \leq C\left(\int_{\Omega} |v_{xxx}|^{\alpha+1}\, dx\right)^{\frac{2}{\alpha+1}}.
    \end{align*}
\noindent\textbf{The case $\alpha<1$.} In this case we have \(2/(\alpha+1) > 1\) and we cannot use Jensen's inequality anymore. Instead, we rely on the Sobolev embedding and a-priori estimates for the Bi-Laplace equation. Define \(w = v_{xxx}\in L_{\alpha+1}(\Omega)\). Then \(v\) is a weak solution to
    \begin{equation*}
    \begin{cases}
        v_{xxxx}
        =
        w_x,
        &
        x\in \Omega,
        \\
        v_x = v_{xxx}
        =
        0,
        &
        x\in \partial\Omega,
    \end{cases}
    \end{equation*}
    in the sense that
        \[\int_{\Omega} v_{xx} \varphi_{xx} \, dx = - \int_{\Omega} w \varphi_x \, dx \quad \text{ for all } \varphi \in W^2_{\frac{\alpha+1}{\alpha},B}(\Omega).\]
    Since \(v \in C^2(\bar{\Omega})\) by the Sobolev embedding, we may use \(v \in W^2_{\frac{\alpha+1}{\alpha},B}(\Omega)\) as a test function and find that
        \[\|v_{xx}\|_{L_2(\Omega)}^2 \leq \int_{\Omega} |w| |v_x| \, dx \leq \|w\|_{L_{\alpha+1}(\Omega)}\|v_x\|_{L_{\frac{\alpha}{\alpha+1}}(\Omega)} \leq C \|w\|_{L_{\alpha+1}(\Omega)}\|v_{xx}\|_{L_{2}(\Omega)}.\]
    Dividing by $\|v_{xx}\|_{L_{2}(\Omega)}$, we conclude that \(\|v_{xx}\|_{L_2(\Omega)}^2 \leq C\|w\|_{L_{\alpha+1}(\Omega)}^2\). Finally, the  desired estimate 
    \begin{align*}
        E[v] = \frac{1}{2} \|v_x\|_{L_2(\Omega)}^2 \leq C\|v_{xx}\|_{L_2(\Omega)}^2 \leq C\|w\|_{L_{\alpha+1}(\Omega)}^2 = C\|v_{xxx}\|_{L_{\alpha+1}(\Omega)}^2
    \end{align*}
    follows by Poincaré's inequality. 
\end{proof}

%-----------------------------------------------------
%-----------------------------------------------------

\begin{proof}[\textbf{Proof of Proposition \ref{prop:ED_estimate}}]
    From Theorem \ref{thm:Local_Ex_PL} we know that weak solutions to \eqref{eq:PDE} satisfy the energy-dissipation identity \eqref{eq:energy-dissipation}. Taking the derivative in time, we find that
    \begin{equation*}
        \frac{d}{dt} E[u](t) + D[u](t) = 0
    \end{equation*}
    for almost every $t\in [0,T]$.
    Furthermore, since \(m = \min_{(t,x)\in [0,T] \times\bar{\Omega}}u(t,x) > 0\) and by Lemma \ref{Lem:Poincare}, we obtain
    \begin{equation*}
        D[u](t) = \int_{\Omega} |u|^{\alpha+2}|u_{xxx}|^{\alpha+1} \, dx \geq 
        m^{\alpha+2}\|u_{xxx}(t)\|_{L_{\alpha+1}(\Omega)}^{\alpha+1} \geq Cm^{\alpha+2}\bigl(E[u](t)\bigr)^{\frac{\alpha+1}{2}}
    \end{equation*}
    for almost every \(t\in [0,T]\). This concludes the proof.
\end{proof}

%=============================================================================
%=============================================================================

Next, we turn to \(L_1\)-in-time bounds for the dissipation functional in terms of the energy. The proof is a simplified version of the one in \cite{lienstromberg_long-time_2022} for general degenerate parabolic problems of fourth order. In our case, it relies on testing the partial differential equation with a time cut-off of the second spatial derivative. 

\begin{theorem}\label{thm:L1-in-time}
Fix $\alpha > 0$ and a positive initial value $u_0\in H^1(\Omega)$ with \(u_0(x) > 0\) for \(x\in \bar{\Omega}\).
Let $u \in C\bigl([0,T];H^1(\Omega)\bigr) 
    \cap
    L_{\alpha+1}\bigl((0,T);W^3_{\alpha+1,B}(\Omega)\bigr)$
    with 
    $u_t \in L_\frac{\alpha+1}{\alpha}\bigl((0,T);(W^1_{\alpha+1,B}(\Omega))'\bigr)$
    be a positive weak solution to \eqref{eq:PDE} on $(0,T)$, as obtained in Theorem \ref{thm:Local_Ex_PL}. Then there exists a constant \(C>0\), independent of \(t\), such that the dissipation functional $D[u]$ enjoys the $L_1$-in-time bound
    \begin{equation*}
        \int_{t/2}^{t} D[u](s)\, ds \leq \frac{C}{t} \int_{t/4}^{t/2} E[u](s) \, ds \leq \frac{C}{4} E[u]\bigl(\tfrac{t}{4}\bigr).
    \end{equation*}
\end{theorem}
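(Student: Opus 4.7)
The plan is to follow the approach suggested by the authors, namely to test the weak formulation of \eqref{eq:PDE} against a function of the form $\phi(s, x) = \eta(s)\, u_{xx}(s, x)$, where $\eta$ is a time cut-off. Concretely, for fixed $t \in (0, T]$, I pick a Lipschitz function $\eta \colon [0, T] \to [0, 1]$ that vanishes on $[0, t/4]$, increases linearly to $1$ on $[t/4, t/2]$ (so $\eta' = 4/t$ there), equals $1$ on $[t/2, t]$, and decreases linearly back to $0$ on $[t, T]$ if $t < T$ (or is kept equal to $1$ up to $s = T$ if $t = T$, in which case the resulting endpoint term will have a favourable sign and may be dropped). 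Since $u \in L_{\alpha+1}\bigl((0,T); W^3_{\alpha+1,B}(\Omega)\bigr)$ and the space $W^1_{\alpha+1, B}(\Omega)$ carries no boundary constraint in the present exponent range, $\phi$ is an admissible test function.

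Inserting $\phi$ into the weak formulation from Definition \ref{def:weak_sol}, one integration by parts in $x$ on the right-hand side — the boundary contribution $\bigl[u^{\alpha+2}\psi(u_{xxx})\, u_{xx}\bigr]_{\partial\Omega}$ vanishing because $u_{xxx} = 0$ on $\partial\Omega$ — yields $\int_0^T \eta(s)\, D[u](s)\, ds$. On the left-hand side, an integration by parts in $x$ using $u_x = 0$ on $\partial\Omega$ turns the pairing against $u_{xx}$ into $-\int_0^T \eta(s)\, \frac{d}{ds} E[u](s)\, ds$. Since $E[u]$ is absolutely continuous on $[0, T]$ (a consequence of the continuity $u \in C([0, T]; H^1(\Omega))$ from Remark \ref{Rem:cont_H1} combined with the energy-dissipation identity of Theorem \ref{thm:Local_Ex_PL}(iii)), a further integration by parts in time, together with $\eta(0) = \eta(T) = 0$, gives
\begin{equation*}
\int_0^T \eta(s)\, D[u](s)\, ds \;=\; \int_0^T \eta'(s)\, E[u](s)\, ds \;=\; \frac{4}{t} \int_{t/4}^{t/2} E[u](s)\, ds \;-\; \frac{1}{T - t} \int_t^T E[u](s)\, ds.
\end{equation*}
Discarding the non-positive last term and bounding the dissipation integral below via $\eta \geq \ONE_{[t/2, t]}$ produces the first inequality
\begin{equation*}
\int_{t/2}^t D[u](s)\, ds \;\leq\; \frac{4}{t} \int_{t/4}^{t/2} E[u](s)\, ds.
\end{equation*}

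For the second inequality, I invoke the monotonicity of $s \mapsto E[u](s)$, which follows from $\frac{d}{ds} E[u] = -D[u] \leq 0$: one has $E[u](s) \leq E[u](t/4)$ for $s \in [t/4, t/2]$ and hence $\int_{t/4}^{t/2} E[u](s)\, ds \leq (t/4)\, E[u](t/4)$. Taking $C = 4$ in both inequalities completes the proof.

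The one genuinely delicate step is the justification of the chain-rule identity $\int_\Omega u_s\, u_{xx}\, dx = -\frac{d}{ds} E[u](s)$, since $u_t$ only lies in the dual space $L_{(\alpha+1)/\alpha}\bigl((0, T); (W^1_{\alpha+1, B}(\Omega))'\bigr)$. This is settled by the duality/mollification argument of Proposition~6.1 in \cite{lienstromberg_local_2020}, applicable here because $u_x$ and $u_{xt}$ form an admissible duality pair in view of Lemma \ref{lem:convergence}(iv)--(v), exactly as in the derivation of the energy-dissipation identity in the proof of Theorem \ref{thm:Local_Ex_PL_reg}(iii).
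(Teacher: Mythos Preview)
Your proof is correct and follows essentially the same cut-off-and-test approach as the paper; the only cosmetic differences are that the paper uses a smooth cut-off supported on $[t/4,\infty)$ and works on the interval $[0,t]$ (dropping the endpoint term via $E[u](t)\geq 0$) rather than extending the cut-off back to zero on $[t,T]$, and that no integration by parts in $x$ is actually needed on the right-hand side since the weak formulation already contains $\phi_x=\eta\,u_{xxx}$.
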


\begin{proof}
    We choose a cut-off function \(\chi\in C^{\infty}(\R)\) in time such that \(0\leq \chi \leq 1\), \(\chi(s) = 1\) for \(s\geq t/2\), \(\chi(s) = 0\) for \(s\leq t/4\) and \(\chi'(s) \leq C/t\) for some constant \(C\), independent of \(t\). Now we define the test function \(\varphi(s,x) = \chi(s) u_{xx}(s,x)\in L_{\alpha+1}\bigl((0,T);W^1_{\alpha+1,B}(\Omega)\bigr)\). Since \(u\) is a weak solution to \eqref{eq:PDE} on the time interval \([0,t]\), we obtain
    \begin{equation}\label{eq:L_1_aux_1}
        \int_{0}^t \langle u_t, \chi(s)u_{xx}\rangle_{W^1_{\alpha+1}} \, ds = \int_{0}^{t} \int_{\Omega} u^{\alpha+2}\psi(u_{xxx})u_{xxx} \chi(s) \, dx\,ds = \int_{0}^{t} \chi(s)D[u](s) \, ds.
    \end{equation}
    Moreover, since \(\chi(0) = 0\) and \(\chi(t) =1\), we have the inequality
    \begin{equation} \label{eq:L_1_aux_2}
        0 \leq E[u](t) 
        = 
        \int_{0}^{t} \frac{d}{ds} \bigl( \chi(s) E[u](s) \bigr) \, ds 
        =  
        \int_{0}^{t} \chi'(s) E[u](s) \, ds - \int_{0}^{t} \chi(s) \langle u_{s}, u_{xx} \rangle_{W^1_{\alpha+1}} \, ds.
    \end{equation}
    Combining \eqref{eq:L_1_aux_1} and \eqref{eq:L_1_aux_2} and using that \(\chi \equiv 1\) on \([t/2,t]\) and \(D[u](s)\geq 0\) for all \(0\leq s \leq t\), we conclude that
    \begin{align*}
        \int_{t/2}^{t} D[u](s) \, ds & \leq \int_{0}^{t} \chi(s)D[u](s) \, ds 
        \leq \int_{0}^{t} \chi'(s) E[u](s) \, ds 
        \leq \frac{C}{t}\int_{t/4}^{t/2} E[u](s) \, ds.
    \end{align*}
    Finally, since \(E[u]\) decreases along solutions, we may estimate
    \begin{equation*}
        \frac{C}{t}\int_{t/4}^{t/2} E[u](s) \, ds \leq \frac{C}{4}E[u]\bigl(\tfrac{t}{4}\bigr).
    \end{equation*}
    This completes the proof.
\end{proof}

%=============================================================================
%=============================================================================
%=============================================================================
\bigskip

\section{Shear-Thickening Power-Law Fluids ($\alpha < 1$) -- Global Existence and Convergence to Steady States in Finite Time} \label{sec:shear-thickening}

This section deals with the long-time asymptotics of shear-thickening power-law fluids, i.e. we consider flow-behaviour exponents $\alpha<1$ in \eqref{eq:PDE}. We prove that for positive initial values $u_0 \in H^1(\Omega)$ that are close to a steady state in the sense that
% Now we consider a positive initial value \(u_0\in H^1(\Omega)\) such that
\begin{equation*}
    \frac{1}{2}\bar{u}_0 < u_0(x) < 2 \bar{u}_0, 
    \quad x\in \bar{\Omega}, \qquad \text{where} \quad \bar{u}_0 = \fint_\Omega u_0\, dx,  
\end{equation*}
problem \eqref{eq:PDE} with $\alpha<1$ possesses a globally-in-time defined positive weak solution that converges to a steady state in finite time. As in the circular Taylor--Couette setting \cite{lienstromberg_analysis_2022}, the corresponding proof relies mainly on the differential inequality derived in Proposition \ref{prop:ED_estimate}. This differential inequality guarantees that the energy becomes zero in finite time $0 < t^\ast < \infty$. We construct a globally-in-time defined positive weak solution by constant extension at time $t^\ast$.

By Theorem \ref{thm:Local_Ex_PL} and Remark \ref{Rem:H1-initial} there exists a weak solution $u \in C\bigl([0,T];H^1(\Omega)\bigr) 
    \cap
    L_{\alpha+1}\bigl((0,T);W^3_{\alpha+1,B}(\Omega)\bigr)$ with $u_t \in L_\frac{\alpha+1}{\alpha}\bigl((0,T);(W^1_{\alpha+1,B}(\Omega))'\bigr)$ to \eqref{eq:PDE}. 
We define the time
\begin{equation}
\label{eq:time_tau}
    \tau
    =
    \sup\{\tilde{T} > 0;\ \text{$\exists$ a weak solution $u$ to \eqref{eq:PDE} on } [0,\tilde{T}]
    \text{ with } \tfrac{1}{2} \bar{u}_0 \leq u(t,x) \leq 2 \bar{u}_0\ \forall\ 0 \leq t \leq \tilde{T}\},
\end{equation}
up to which solutions are bounded away from zero and bounded above. Note that by continuity of weak solutions, we have \(0<\tau\). By Remark \ref{Rem:extension} we may also assume that \(\tau \leq T\). In particular, we can apply the results of Section \ref{sec:regularity-estimates} up to time \(\tau\).

%-----------------------------------------------------
%-----------------------------------------------------

\begin{theorem}[Global existence and convergence in finite time]\label{Thm:Shear_thick}
    Fix \(0<\alpha < 1\). There exists \(\eps >0\) such that, for all positive initial values \(u_0\in H^1(\Omega)\) with \(\|u_0-\bar{u}_0\|_{H^1(\Omega)} < \eps\), there is a positive global weak solution 
    \begin{equation*}
        u\in C\bigl([0,\infty);H^1(\Omega)\bigr) \cap L_{\alpha+1,\mathrm{loc}}\bigl((0,\infty);W^3_{\alpha+1,B}(\Omega)\bigr) \quad  \text{with} \quad
    u_t \in L_{\frac{\alpha+1}{\alpha},\text{loc}}\bigl((0,\infty);(W^1_{\alpha+1,B}(\Omega))'\bigr).
    \end{equation*}
    Moreover, there exists a time \(0<t^* < \infty\) such that
         \[u(t,\cdot) \longrightarrow \bar{u}_0 \text{ in } H^1(\Omega), \text{ as } t\to t^*, \quad \text{and} \quad  u(t,x) = \bar{u}_0, \quad t \geq t^*,\ x\in \bar{\Omega}.
        \]
\end{theorem}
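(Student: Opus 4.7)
The plan is to iterate the local existence result of Theorem \ref{thm:Local_Ex_PL} to obtain a global solution and then exploit the extinction character of the differential inequality in Proposition \ref{prop:ED_estimate}, which comes from the exponent $(\alpha+1)/2 < 1$ in the shear-thickening regime. Since $\bar{u}_0 > 0$ and $H^1(\Omega) \hookrightarrow L_\infty(\Omega)$ with some constant $C_S$, choosing $\eps$ small enough (concretely, $\eps < \bar{u}_0/(4 C_S\sqrt{C_P^2+1})$, with $C_P$ the Poincar\'e--Wirtinger constant on $\Omega$) guarantees that $u_0(x) \in (3\bar{u}_0/4, 5\bar{u}_0/4)$ for all $x\in\bar{\Omega}$, so Theorem \ref{thm:Local_Ex_PL} together with Remark \ref{Rem:H1-initial} produces a positive weak solution on some initial interval $[0, T]$.

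The key a priori bound, uniform in time, comes from conservation of mass and the energy-dissipation identity. By Theorem \ref{thm:Local_Ex_PL}(ii), the function $u(t)-\bar{u}_0$ has mean zero on $\Omega$, so Poincar\'e--Wirtinger combined with monotonicity of the energy along solutions gives
\[
\|u(t) - \bar{u}_0\|_{H^1(\Omega)}^2 \leq 2(C_P^2+1)\,E[u](t) \leq 2(C_P^2+1)\,E[u_0] \leq (C_P^2+1)\,\eps^2
\]
for every $t$ in the interval of existence. With the above choice of $\eps$, Sobolev embedding then gives $u(t,x) \in (3\bar{u}_0/4, 5\bar{u}_0/4)$ for all such $t$ and all $x\in\bar{\Omega}$. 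In particular $u$ never approaches zero, so Remark \ref{Rem:extension} allows one to restart the equation from $u(T)\in H^1(\Omega)$ at any terminal time, producing by iteration a global positive weak solution $u \in C([0,\infty); H^1(\Omega)) \cap L_{\alpha+1,\loc}((0,\infty); W^3_{\alpha+1,B}(\Omega))$ with $u_t \in L_{(\alpha+1)/\alpha,\loc}((0,\infty); (W^1_{\alpha+1,B}(\Omega))')$ satisfying the same $H^1$-bound for all $t\geq 0$.

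Since now $u \geq m := \bar{u}_0/2 > 0$ uniformly on $[0,\infty) \times \bar{\Omega}$, Proposition \ref{prop:ED_estimate} applies with a single constant $C = C_{\alpha, \Omega, m} > 0$, yielding $\frac{d}{dt}E[u](t) \leq -C\,(E[u](t))^{(\alpha+1)/2}$ for a.e.\ $t \geq 0$. Since $0 < (\alpha+1)/2 < 1$, an elementary integration of $f(t) := (E[u](t))^{(1-\alpha)/2}$ gives
\[
E[u](t)^{(1-\alpha)/2} \leq E[u_0]^{(1-\alpha)/2} - \tfrac{(1-\alpha)C}{2}\,t,
\]
so that $E[u](t) = 0$ for all $t \geq t^* := \tfrac{2}{(1-\alpha)C}\,E[u_0]^{(1-\alpha)/2} < \infty$. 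Re-using the $H^1$-estimate from the previous paragraph, $\|u(t) - \bar{u}_0\|_{H^1(\Omega)} \to 0$ as $t \nearrow t^*$, and in particular $u(t^*, \cdot) \equiv \bar{u}_0$. One then extends the solution by $u(t, \cdot) := \bar{u}_0$ for all $t \geq t^*$; the constant function trivially belongs to all required function spaces, both sides of the weak formulation in Definition \ref{def:weak_sol} vanish identically on the constant branch, and continuity in $H^1(\Omega)$ at $t^*$ has just been verified, so the glued function is indeed a global weak solution of the required regularity.

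The main obstacle is the circularity between the dissipation estimate and the pointwise lower bound on $u$: Proposition \ref{prop:ED_estimate} requires $u$ to stay bounded away from zero, but it is the preservation of this non-degeneracy that one would like to conclude globally. The clean way to break this loop is the $H^1$-based a priori estimate above, which relies only on conservation of mass and monotonicity of the energy and is therefore independent of the nonlinear dissipation structure; once this bound is in place, both the global-in-time extension and the application of the \L ojasiewicz--Simon-type ODI become routine.
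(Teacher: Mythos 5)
Your proof is correct and follows the same overall strategy as the paper's (local existence + energy monotonicity + Poincar\'e/Sobolev to keep the solution pinned near $\bar{u}_0$ + the differential inequality of Proposition~\ref{prop:ED_estimate} with subunit exponent $(\alpha+1)/2$ for finite-time extinction + constant continuation). The main structural difference is the order of operations: you first establish the uniform $H^1$ a priori bound (using only conservation of mass and the fact that $E[u]$ is nonincreasing along solutions), deduce that $u$ stays pinched in $(3\bar{u}_0/4, 5\bar{u}_0/4)$, iterate Remark~\ref{Rem:extension} to obtain a global solution, and only then run the extinction ODI. The paper instead works with the auxiliary time $\tau$ of \eqref{eq:time_tau}, establishes the ODI on $[0,\tau]$, argues $\tau=T$, and then chooses $\eps$ small enough that the extinction time $t^*$ is already smaller than the \emph{first} local existence time $T$, so that the global extension is simply the constant branch — thereby avoiding a detailed iteration argument. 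Both routes rely on the same implicit fact (that the local existence time does not shrink to zero as $\eps\to0$, since the data stay uniformly away from zero and uniformly bounded in $H^1$), and both eventually invoke the constant continuation from $t^*$. Your version is more explicit about the smallness conditions on $\eps$ and cleanly decouples global existence from the extinction mechanism, which is arguably easier to follow; the paper's version is slightly more economical in that it never needs to iterate past the first local interval. As a minor remark, once the solution is globally extended with the uniform $H^1$ bound and $E[u](t)=0$ for $t\geq t^*$, the final ``gluing'' paragraph in your write-up is redundant — $u_x(t)\equiv 0$ together with $\bar{u}(t)=\bar{u}_0$ already force $u(t,\cdot)\equiv\bar{u}_0$ for $t\geq t^*$ on whatever global branch was constructed.
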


%-----------------------------------------------------
%-----------------------------------------------------

\begin{proof}
    Let \(u \in C\bigl([0,T];H^1(\Omega)\bigr) \cap L_{\alpha+1}\bigl((0,T);W^3_{\alpha+1,B}(\Omega)\bigr) \) the solution to \eqref{eq:PDE} provided by Theorem \ref{thm:Local_Ex_PL} and Remark \ref{Rem:H1-initial} with initial datum \(u_0 > \bar{u}_0/2\) in \( \bar{\Omega}\). Write \(u(t,x) = \bar{u}_0 + v(t,x)\) for \( (t,x) \in [0,T]\times \Omega \), where due to conservation of mass \(\int_{\Omega} v \, dx= 0\) for all \(t\in [0,T]\times \bar{\Omega}\). Then, by continuity and the definition of \(\tau\), we have \(|v(t,x)| \leq \bar{u}_0/2\) for \((t,x)\in [0,\tau]\times\bar{\Omega}\). Thus, there exists a constant \(C>0\) such that for almost every \(t\in [0,\tau]\) it holds
    	\[\int_{\Omega} |v_{xxx}|^{\alpha+1} \, dx 
    	\leq 
    	C \int_{\Omega} |u|^{\alpha+2} |v_{xxx}|^{\alpha+1}\, dx.\]
    Hence, using the energy-dissipation identity \eqref{eq:energy-dissipation} and Lemma \ref{Lem:Poincare}, we obtain 
    	\[\frac{d}{dt} E[v](t) = \frac{d}{dt} E[u](t) 
    	= 
    	- \int_{\Omega}|u|^{\alpha+2} |v_{xxx}|^{\alpha+1}\, dx 
    	\leq 
    	-C \|v_{xxx}(t)\|_{L_{\alpha+1}(\Omega)}^{\alpha+1} \leq -C \bigl(E[v](t)\bigr)^{\frac{\alpha+1}{2}}\]
    for almost every \(t\in [0,\tau]\). This inequality implies that the energy \(E[v](\cdot)=E[u](\cdot)\) is decreasing and hence \(\tau = T\). Furthermore, it follows that
    	\[\frac{d}{dt} \left( \bigl(E[v](t)\bigr)^{\frac{1-\alpha}{2}} \right) \leq -C_{\alpha},\]
    as long as \(E[v](t) >0\), and integration from \(0\) to \(t\) yields
    	\[\bigl(E[v](t)\bigr)^{\frac{1-\alpha}{2}} 
    	\leq 
    	\bigl(E[v_0]\bigr)^{\frac{1-\alpha}{2}} - C_{\alpha}t, \quad t\in [0,T], \text{ if } E[v](t) >0.\]
   	Thus, we conclude that
   		\[E[v](t) 
   		\leq 
   		\left(\bigl(E[v_0]\bigr)^{\frac{1-\alpha}{2}} - C_{\alpha}t\right)^{\frac{2}{1-\alpha}}, \quad t\in [0,T], \text{ if } E[v](t) >0,\]
    which implies the existence of a finite time \(t^* \geq 0\) with \(t^* \leq (E[v_0])^{\frac{1-\alpha}{2}}/C_{\alpha}\) such that
    	\[E[v](t) = 0, \quad t \geq t^*.\]
    We may choose \(\eps >0\) small enough so that we obtain \(t^* < T\). Finally, note that \(E[v](t) = 0\) for \(t \geq t^*\) and \(\bar{v}(t)=0\) implies that \(v(t,x)=0\) for all \(t\geq t^*\) and \(x\in \Omega\). Hence, the solution \(u\) may be extended by the constant solution \(\bar{u}_0\) for times \(t\geq t^*\) to a global-in-time weak solution \(u\in C\bigl([0,\infty);H^1(\Omega)\bigr) \cap L_{\alpha+1,\mathrm{loc}}\bigl((0,\infty);W^3_{\alpha+1,B}(\Omega)\bigr)\) and we have
    	\[u(t,x) \longrightarrow \bar{u}_0 \quad \text{in } H^1(\Omega)\]
     and uniformly as \(t\to t^*\) in finite time.
\end{proof}

%=============================================================================
%=============================================================================
%=============================================================================

\section{Shear-Thinning Power-Law Fluids ($\alpha > 1$) -- Global Existence and Polynomial Stability of Steady States} \label{sec:shear-thinning}

In this section we study the long-time behaviour of solutions to the  shear-thinning power-law equation. More precisely, we fix a flow-behaviour exponent $\alpha>1$ in \eqref{eq:PDE} and consider positive initial values $u_0 \in H^1(\Omega)$ that are close to a steady state in the sense that
\begin{equation*}
    \frac{1}{2}\bar{u}_0 < u_0(x) < 2 \bar{u}_0, 
    \quad x\in \bar{\Omega}, 
\end{equation*}
where $\bar{u}_0 = \fint_\Omega u_0\, dx$.   
We show that there exist global positive weak solutions $u$ to \eqref{eq:PDE} with $\alpha>1$ that remain $\eps$-close to the steady state for all times and converge at rate $1/t^\frac{1}{\alpha-1}$ to equilibrium, as $t\to \infty$.
Note that convergence to equilibrium has already been proved for the global non-negative weak solutions constructed in \cite{ansini_doubly_2004}, but with no rate of convergence.
The result on the rate of convergence is the same as in \cite{lienstromberg_long-time_2022} for the cylindrical Taylor--Couette setting. The proof relies again on the differential inequality for the energy, derived in Proposition \ref{prop:ED_estimate}. However, in the shear-thinning case also the $L_1$-in-time bound of Theorem \ref{thm:L1-in-time} is crucial.

\begin{theorem}[Global existence and polynomial stability]\label{Thm:Shear_thin}
    Fix \(1<\alpha < \infty\). There exists \(\eps >0\) such that for all positive initial values \(u_0\in H^1(\Omega)\) with \(\|u_0-\bar{u}_0\|_{H^1(\Omega)} < \eps\), there is a global positive weak solution
    \begin{equation*}
        u\in C\bigl([0,\infty);H^1(\Omega)\bigr) \cap L_{\alpha+1,\mathrm{loc}}\bigl((0,\infty);W^3_{\alpha+1,B}(\Omega)\bigr) \quad  \text{with} \quad
    u_t \in L_{\frac{\alpha+1}{\alpha},\text{loc}}\bigl((0,\infty);(W^1_{\alpha+1,B}(\Omega))'\bigr).
    \end{equation*}
    Moreover, there is a constant \( C>0 \) such that
    \begin{equation*}
        \|u(t)-\bar{u}_0\|_{H^1(\Omega)} \leq \frac{C\eps}{\bigl(1+C\eps^{\alpha-1}t\bigr)^{\frac{1}{\alpha-1}}},
        \quad
        0 \leq t < \infty.
    \end{equation*}
    Furthermore, the dissipation decreases polynomially along the solution in the following \(L_1\)-in-time sense
    \begin{equation}\label{eq:L1-in-time-dissipation-thin}
        \int_{t/2}^{t} D[u](s) \, ds \leq \frac{C\eps^2}{\bigl(1+C\eps^{\alpha-1}t\bigr)^{\frac{2}{\alpha-1}}}
    \end{equation}
    for all $0 \leq t < \infty$.
\end{theorem}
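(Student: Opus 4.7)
The plan is to run a bootstrap continuation argument in the same spirit as the proof of Theorem \ref{Thm:Shear_thick}, but now the differential inequality of Proposition \ref{prop:ED_estimate} produces polynomial decay rather than finite-time extinction because the exponent $(\alpha+1)/2$ is strictly greater than $1$. First, by Theorem \ref{thm:Local_Ex_PL} and Remark \ref{Rem:H1-initial}, for every $u_0\in H^1(\Omega)$ with $\|u_0-\bar{u}_0\|_{H^1(\Omega)}<\eps$ (with $\eps$ to be fixed small), the 1D Sobolev embedding $H^1(\Omega)\hookrightarrow L_\infty(\Omega)$ yields $u_0(x)\in(\tfrac{3}{4}\bar{u}_0,\tfrac{5}{4}\bar{u}_0)$, so there is a positive weak solution on some interval $[0,T]$ with $u(t,x)\in[\tfrac{1}{2}\bar{u}_0,2\bar{u}_0]$. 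Define $\tau$ as in \eqref{eq:time_tau}. On $[0,\tau]$ the lower bound $m=\bar{u}_0/2>0$ is uniform, so Proposition \ref{prop:ED_estimate} provides
\begin{equation*}
    \frac{d}{dt}E[u](t) \leq -C\bigl(E[u](t)\bigr)^{(\alpha+1)/2}
\end{equation*}
with a constant $C=C_{\alpha,\Omega,\bar{u}_0}$ independent of $\tau$.

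Integrating this scalar ODE (with $p=(\alpha+1)/2>1$) yields
\begin{equation*}
    E[u](t) \leq \frac{E[u_0]}{\bigl(1+c\,E[u_0]^{(\alpha-1)/2}\,t\bigr)^{2/(\alpha-1)}}, \qquad t\in[0,\tau],
\end{equation*}
for a constant $c=c(\alpha,\Omega,\bar{u}_0)>0$. Combining with $E[u_0]\leq\tfrac{1}{2}\eps^2$ gives the bound $E[u](t)\leq C\eps^2/(1+C\eps^{\alpha-1}t)^{2/(\alpha-1)}$. Since conservation of mass forces $\int_\Omega(u-\bar{u}_0)\,dx=0$, the Poincaré--Wirtinger inequality yields $\|u(t)-\bar{u}_0\|_{H^1(\Omega)}^2 \leq C\,E[u](t)$, hence
\begin{equation*}
    \|u(t)-\bar{u}_0\|_{H^1(\Omega)} \leq \frac{C\eps}{\bigl(1+C\eps^{\alpha-1}t\bigr)^{1/(\alpha-1)}}, \qquad t\in[0,\tau].
\end{equation*}
Applying the Sobolev embedding once more, $\|u(t)-\bar{u}_0\|_{L_\infty(\Omega)}\leq C_S\eps$ on $[0,\tau]$. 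Choosing $\eps\leq \bar{u}_0/(4C_S)$, the bounds $\tfrac{1}{2}\bar{u}_0\leq u(t,x)\leq 2\bar{u}_0$ hold strictly on $[0,\tau]\times\bar{\Omega}$, so by Remark \ref{Rem:extension} the solution can be continued past $\tau$ while staying in the good region. This contradicts the maximality of $\tau$ unless $\tau=\infty$, which therefore establishes global existence and the claimed $H^1$-decay rate simultaneously.

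For the $L_1$-in-time bound on the dissipation, it suffices to apply Theorem \ref{thm:L1-in-time} to the interval $[0,t]$, giving
\begin{equation*}
    \int_{t/2}^{t} D[u](s)\,ds \leq \frac{C}{4}\,E[u]\bigl(\tfrac{t}{4}\bigr) \leq \frac{C\eps^2}{\bigl(1+C\eps^{\alpha-1}t\bigr)^{2/(\alpha-1)}},
\end{equation*}
after absorbing the factor $1/4$ into the constants. The main obstacle I expect is the closure step of the continuation argument: I need the polynomial decay of $E[u]$ to translate into \emph{uniform-in-time} pointwise control of $u$ near $\bar{u}_0$, and this hinges precisely on the one-dimensional Sobolev embedding $H^1(\Omega)\hookrightarrow L_\infty(\Omega)$, together with the fact that the constant $C$ in Proposition \ref{prop:ED_estimate} depends on the lower bound $m$ but not on $\tau$. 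Once $\eps$ is chosen in terms of $\bar{u}_0$, $C_S$, and the Sobolev/Poincaré constants, the bootstrap closes and the remaining estimates are routine consequences of the ODE argument.
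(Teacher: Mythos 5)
Your proof is correct and follows essentially the same route as the paper's: you invoke the differential inequality of Proposition \ref{prop:ED_estimate} on the good interval $[0,\tau]$, integrate the scalar ODE to get polynomial decay, use Poincar\'e--Wirtinger and the one-dimensional Sobolev embedding $H^1(\Omega)\hookrightarrow L_\infty(\Omega)$ to keep $u$ uniformly close to $\bar{u}_0$, close the bootstrap via Remark \ref{Rem:extension}, and then apply Theorem \ref{thm:L1-in-time} for the $L_1$-in-time dissipation bound. The only cosmetic difference is that you phrase the continuation step as a contradiction with the maximality of $\tau$, whereas the paper explicitly extends the solution by restarting.
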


\begin{remark}
    Note that the weak solution \(u\in C\bigl([0,\infty);H^1(\Omega)\bigr) \cap L_{\alpha+1,\mathrm{loc}}\bigl((0,\infty);W^3_{\alpha+1,B}(\Omega)\bigr)\) obtained in Theorem \ref{Thm:Shear_thin} satisfies \(u(t,x) \geq \bar{u}_0/2\) for all \((t,x)\in [0,\infty)\times\bar{\Omega}\). Hence, \eqref{eq:L1-in-time-dissipation-thin} implies that the \(W^3_{\alpha+1}(\Omega)\)-norm is also controlled in the \(L_1\)-in-time sense by
    \begin{equation*}
        \int_{t/2}^{t} \int_{\Omega} |u_{xxx}(s)|^{\alpha+1} \, dx\,ds \leq  \frac{C\eps}{\bigl(1+C\eps^{\alpha-1}t\bigr)^{\frac{1}{\alpha-1}}}
    \end{equation*}
    for all $0 \leq t < \infty$.
\end{remark}

%-----------------------------------------------------
%-----------------------------------------------------

\begin{proof}[\textbf{Proof of Theorem  \ref{Thm:Shear_thin}}]
	First, we show that there exists an \(\eps> 0\) such that for all initial values \(u_0 \in H^1(\Omega)\) with \(\bar{u}_0=0\) and \(\|u_0- \bar{u}_0\|<\eps\), there is a constant \(C>0\) independent of \(\eps\) such that
	\begin{equation*}
	    E[u](t) \leq \frac{C\eps^2}{\bigl(1+\eps^{\alpha-1}t\bigr)^{\frac{2}{\alpha-1}}},
	    \quad 0 \leq t < \infty.
	\end{equation*}
	 Let \(u \in C\bigl([0,T];H^1(\Omega)\bigr) \cap L_{\alpha+1}\bigl((0,T);W^3_{\alpha+1,B}(\Omega)\bigr) \) the solution to \eqref{eq:PDE} provided by Theorem \ref{thm:Local_Ex_PL} and Remark \ref{Rem:H1-initial} with initial datum $u_0 \in H^1(\Omega)$ satisfying \(u_0 > \bar{u}_0/2\) in \( \bar{\Omega}\). As in the proof of Theorem \ref{Thm:Shear_thick}, we  write \(u(t,x) = \bar{u}_0 + v(t,x)\) for \( (t,x) \in [0,T] \times \bar{\Omega}\), where due to conservation of mass \(\int_{\Omega} v\, dx= 0\) for all \(t\in [0,T]\). Then, by continuity and the definition of \(\tau\) (see \eqref{eq:time_tau}), we have \(|v(t,x)| \leq \bar{u}_0/2\) for \((t,x) \in [0,\tau]\times \bar{\Omega}\). By Lemma \ref{Lem:Poincare} we then conclude that
	 	\[ E[u](t)=E[v](t) 
	 	\leq 
	 	C \left(\int_{\Omega} |v_{xxx}|^{\alpha+1}\,dx\right)^{\frac{2}{\alpha+1}} 
	 	\leq 
	 	C \left(\int_{\Omega} |u|^{\alpha+2}|v_{xxx}|^{\alpha+1}\, dx\right)^{\frac{2}{\alpha+1}} 
	 	= C \bigl(D[u](t)\bigr)^{\frac{2}{\alpha+1}}\]
	 for almost every \(t \in [0,\tau]\). Inserting this into the energy-dissipation identity \eqref{eq:energy-dissipation}, we find that
	 	\begin{equation}
	 	    \label{eq:Energy_dec_Shear_thin}
	 	\frac{d}{dt} E[u](t) = - D[u](t) \leq -C \bigl(E[u](t)\bigr)^{\frac{\alpha+1}{2}}
	 	\end{equation}
	 for almost every \(t \in [0,\tau]\). This implies that the energy \(E[u](\cdot)\) is decreasing and hence \(\tau = T\). Furthermore,  we can rewrite estimate \eqref{eq:Energy_dec_Shear_thin} as
	 	\[\frac{2}{1-\alpha} \frac{d}{dt} \bigl( E[u](t) \bigr)^{\frac{1-\alpha}{2}} \leq - C,
	 	\quad 0 \leq t \leq \tau,\]
	 so that, after integration, we obtain
		\[\frac{2}{1-\alpha} \bigl(E[u](t)\bigr)^{\frac{1-\alpha}{2}} \leq -Ct + \frac{2}{1-\alpha} \bigl(E[u_0]\bigr)^{\frac{1-\alpha}{2}}, \quad 0\leq  t \leq \tau.\]
	Since \(\alpha >1\), we can rearrange this inequality to
		\[E[u](t) \leq \left(\bigl(E[u_0]\bigr)^{\frac{1-\alpha}{2}} + \frac{C(\alpha-1)}{2}t \right)^{\frac{2}{1-\alpha}} = \frac{E[u_0]}{\left(1+C\bigl(E[u_0]\bigr)^{\frac{\alpha-1}{2}}t\right)^{\frac{2}{\alpha-1}}}, \quad 0 \leq t \leq \tau.\]
	Since the function \(s\mapsto \frac{s}{\bigl(1+Cs^{\frac{\alpha-1}{2}}t\bigr)^{\frac{2}{\alpha-1}}}\) is increasing on $[0,\infty)$ and \(E[u_0] \leq \eps^2\) by assumption, we infer that
	    \[E[u](t) \leq \frac{C\eps^2}{\bigl(1+C\eps^{\alpha-1}t\bigr)^{\frac{2}{\alpha-1}}}, \quad 0 \leq t \leq \tau.\]
	Now, we choose \(\eps >0\) such that
    \begin{equation*}
        \|u_0-\bar{u}_0\|_{L_{\infty}(\Omega)} \leq C \bigl(E[u_0]\bigr)^{\frac{1}{2}} \leq \frac{\bar{u}_0}{2},
    \end{equation*}
    where the first estimate is due to the embedding $H^1(\Omega) \hookrightarrow L_{\infty}(\Omega)$ and the Poincaré inequality.
    This, together with the fact that \( E[u](\cdot)\) is decreasing, guarantees that
    \begin{equation*}
        \|u(t)- \bar{u}_0\|_{L_{\infty}(\Omega)} 
        \leq 
        C \bigl(E[u](t)\bigr)^{\frac{1}{2}}  
        \leq 
        C \bigl(E[u_0]\bigr)^{\frac{1}{2}}  
        \leq \frac{\bar{u}_0}{2},
        \quad 0\leq t \leq \tau.
    \end{equation*}
    Hence, solutions $u$ to \eqref{eq:PDE} on $[0,\tau]$ remain strictly bounded away from zero and by bootstrapping as in Remark \ref{Rem:extension}, we may extend it beyond time \(\tau\) to a global-in-time weak solution \(u\in C\bigl([0,\infty);H^1(\Omega)\bigr)\) \\ \(\cap L_{\alpha+1,\mathrm{loc}}\bigl((0,\infty);W^3_{\alpha+1,B}(\Omega)\bigr)\)  that satisfies 
    \begin{equation*}
        E[u](t) \leq \frac{C\eps^2}{\bigl(1+C\eps^{\alpha-1}t\bigr)^{\frac{2}{\alpha-1}}}, \quad 0 \leq  t <\infty.
    \end{equation*}
    Since by Poincaré's inequality we have
    \begin{equation*}
        \|u(t)-\bar{u}_0\|_{H^1(\Omega)} \leq C \sqrt{E[u](t)} \leq \frac{C\eps}{\bigl(1+C\eps^{\alpha-1}t\bigr)^{\frac{1}{\alpha-1}}},
        \quad 0 \leq  t <\infty,
    \end{equation*}
    we conclude the polynomial stability in \(H^1(\Omega)\). For the \(L_1\)-in-time estimate, we apply Theorem \ref{thm:L1-in-time} and obtain
    \begin{equation*}
        \int_{t/2}^{t} D[u](s) \, ds \leq C E[u]\bigl(\tfrac{t}{4}\bigr) \leq \frac{C\eps^2}{\bigl(1+C\eps^{\alpha-1}t\bigr)^{\frac{2}{\alpha-1}}},
        \quad 0 \leq  t <\infty.
    \end{equation*}
    This completes the proof.
\end{proof}

%-----------------------------------------------------
%-----------------------------------------------------
\bigskip

\section{Global Existence and Exponential Stability for the Ellis-Law Thin-Film Equations} \label{sec:Ellis}

Now we turn to fluids with Ellis-law rheology. These are fluids whose viscosity approaches a Newtonian plateau for low shear rates, while for big shear rates the viscosity is shear-thinning. The corresponding thin-film equation is given by
\begin{equation} \label{eq:PDE-Ellis}
    \begin{cases}
        u_t + \bigl(u^{3}(1+ |uu_{xxx}|^{\alpha-1}) u_{xxx}\bigr)_x
        =
        0,
        & t>0,\ x \in \Omega,
        \\
        u_x(t,x) = u_{xxx}(t,x) = 0, 
        &
        t > 0,\ x \in \partial\Omega,
        \\
        u(0,x) = u_0(x),
        &
        x \in \Omega,
    \end{cases}
\end{equation}
for flow-behaviour exponents $\alpha \geq 1$. Here \(\Omega\subset \R\) denotes, as before, a bounded interval. 

\begin{definition}\label{def:weak_sol-Ellis}
Let \(\alpha>1\). For a given $T > 0$ a weak solution to \eqref{eq:PDE-Ellis} is defined as a function
\begin{equation*}
    u \in C\bigl([0,T];H^1(\Omega)\bigr) 
    \cap
    L_{\alpha+1}\bigl((0,T);W^3_{\alpha+1,B}(\Omega)\bigr)
    \quad
    \text{with}
    \quad
    u_t \in L_\frac{\alpha+1}{\alpha}\bigl((0,T);(W^1_{\alpha+1,B}(\Omega))'\bigr)
\end{equation*} 
that has the following properties:
\begin{itemize}
    \item[(i)] (Weak formulation) $u$ satisfies the differential equation  $\eqref{eq:PDE-Ellis}_1$ in the weak sense, i.e., 
    \begin{equation*}
        \int_0^T \langle u_t,\phi\rangle_{W^1_{\alpha+1}(\Omega)}\, dt
        =
        \int_0^T \int_\Omega u^3\bigl(1+|uu_{xxx}|^{\alpha-1}\bigr)u_{xxx}\, \phi_x\, dx\, dt
    \end{equation*}
    for all test functions $\phi \in L_{\alpha+1}\bigl((0,T);W^1_{\alpha+1,B}(\Omega)\bigr)$.
    \item[(ii)] (Initial and boundary values) $u$ satisfies the contact angle condition \(u_x=0\) on \(\partial\Omega\) and the initial condition $\eqref{eq:PDE-Ellis}_3$ pointwise.
\end{itemize}
\end{definition}

In the case of Ellis fluids we naturally obtain the dissipation functional
\begin{equation*}
    D[u] = \int_{\Omega} u^3\bigl(1+|uu_{xxx}|^{\alpha-1}\bigr)|u_{xxx}|^2 \, dx.
\end{equation*}

For general positive initial data in \(H^1(\Omega)\) we can show existence of local-in-time positive weak solutions.

\begin{theorem}[Local existence of positive weak solutions] \label{thm:Local_Ex_Ellis}
Let \(\alpha > 1\). Given a positive initial value $u_0\in H^1(\Omega)$ with
$u_0(x) > 0,\ x \in \bar{\Omega}$,
% \begin{equation*}
%     \frac{1}{|\Omega|} \int_\Omega u_0(x)\, dx = \bar{u}_0
%     \quad \text{and} \quad
%     \|u_0 - \bar{u}_0\|_{H^1(\Omega)} \leq \eps
% \end{equation*}
there exists a time $T > 0$ such that problem \eqref{eq:PDE-Ellis} admits at least one positive weak solution \begin{equation*}
    u \in C\bigl([0,T];H^1(\Omega)\bigr) 
    \cap
    L_{\alpha+1}\bigl((0,T);W^3_{\alpha+1,B}(\Omega)\bigr)
    \quad \text{with} \quad
    u_t \in 
    L_\frac{\alpha+1}{\alpha}\bigl((0,T);(W^1_{\alpha+1,B}(\Omega))^\prime\bigr)
\end{equation*}
on $(0,T)$ in the sense of Definition \ref{def:weak_sol-Ellis}. Moreover, such a solution has the following properties:
\begin{itemize}
    \item[(i)] (Positivity) $u$ is bounded away from zero
    \begin{equation*}
        0
        < C_T \leq
        u(t,x),
        \quad
        0 \leq t \leq T,\ x \in \bar{\Omega}.
    \end{equation*}
    \item[(ii)] (Conservation of mass) $u$ conserves its mass in the sense that
        \begin{equation*}
            \|u(t)\|_{L_1(\Omega)} 
            =
            \|u_0\|_{L_1(\Omega)},
            \quad 0 \leq t \leq T.
        \end{equation*}
    \item[(iii)] (Energy-dissipation identity) Energy is dissipated along solutions
        \begin{equation}\label{eq:energy-dissipation-Ellis}
            E[u](t) + \int_0^t D[u](s) \, ds= E[u_0]
        \end{equation}
    for almost every $t\in [0,T]$. 
\end{itemize}
\end{theorem}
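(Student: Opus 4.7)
The plan is to mirror the proof of Theorem~\ref{thm:Local_Ex_PL} for the power-law case developed in Section~\ref{sec:Local_Existence}. Writing the Ellis flux as $u^3 u_{xxx}+u^{\alpha+2}\psi(u_{xxx})$ with $\psi(s)=|s|^{\alpha-1}s$, equation \eqref{eq:PDE-Ellis} is just the sum of the Newtonian thin-film flux and the shear-thinning power-law flux with exponent $\alpha>1$, so every step from Section~\ref{sec:Local_Existence} should carry over, with the Newtonian term providing \emph{additional} control rather than obstructions. Concretely, the strategy is regularisation, uniform a-priori bounds, and passage to the limit via Minty's monotonicity trick.

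For the regularised problem I would replace $\psi$ by $\psi_\sigma(s)=(s^2+\sigma^2)^{(\alpha-1)/2}s$ and study
\begin{equation*}
u^\sigma_t+\bigl((u^\sigma)^3 u^\sigma_{xxx}+(u^\sigma)^{\alpha+2}\psi_\sigma(u^\sigma_{xxx})\bigr)_x=0
\end{equation*}
with the same boundary and initial conditions. Since the principal coefficient $(u^\sigma)^3+(u^\sigma)^{\alpha+2}\psi_\sigma'(u^\sigma_{xxx})\geq(u^\sigma)^3$ is bounded below by a positive constant whenever $u^\sigma$ is, Theorem~\ref{thm:Local_Ex_PL_reg} carries over essentially verbatim and produces unique maximal positive strong solutions together with conservation of mass and the natural energy--dissipation identity. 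The uniform bounds of Lemma~\ref{lem:uniform_bounds} then transfer: the shear-thinning part of the dissipation yields the $L_{\alpha+1}$-in-time bound on $u^\sigma_{xxx}$ precisely as in Lemma~\ref{lem:uniform_bounds}(iv) for $\alpha>1$, while the Newtonian contribution $\int_\Omega(u^\sigma)^3|u^\sigma_{xxx}|^2\,dx$ is an even stronger $L_2$-in-time control. The $L_\infty((0,T);H^1(\Omega))$-bound, the $L_{(\alpha+1)/\alpha}$-bound on the flux, and the analogous bound on $u^\sigma_t$ in $L_{(\alpha+1)/\alpha}((0,T);(W^1_{\alpha+1,B}(\Omega))')$ follow from the same Hölder-inequality computations as in Lemma~\ref{lem:uniform_bounds}(ii)--(iii), the Newtonian flux being trivially dominated by $(u^\sigma)^{3/2}(D[u^\sigma])^{1/2}$. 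Combining these with the Aubin--Lions--Simon lemma fixes a time $T>0$ on which $(u^\sigma)_\sigma$ is equicontinuous and uniformly bounded away from zero.

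The main obstacle, and the final step, is to pass to the limit $\sigma\searrow 0$ and to identify the nonlinear flux. Compactness as in Lemma~\ref{lem:convergence} provides a candidate $u$ with the required regularity together with strong convergence $u^\sigma\to u$ in $C([0,T];C^\rho(\bar\Omega))$, weak convergence of $u^\sigma_{xxx}$ in $L_{\alpha+1}$, and weak convergence of the flux to some limit $\chi$. I would identify $\chi$ with $u^3 u_{xxx}+u^{\alpha+2}|u_{xxx}|^{\alpha-1}u_{xxx}$ by Minty's trick as in Lemma~\ref{lem:limit_flux}, exploiting that $s\mapsto s+\psi(s)$ is monotone and that the strong convergence of $u^\sigma$ gives uniform convergence of the coefficients $(u^\sigma)^3$ and $(u^\sigma)^{\alpha+2}$; the regularising summand $\sigma^2$ inside $\psi_\sigma$ contributes only a uniformly small error that vanishes in the limit. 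Finally, positivity, conservation of mass, the contact-angle condition, and the energy--dissipation identity \eqref{eq:energy-dissipation-Ellis} pass to the limit in the usual way via weak/strong convergence and lower semicontinuity.
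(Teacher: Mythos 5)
Your plan is sound but takes a longer route than the paper, because you import the $\psi_\sigma$-regularisation from the power-law case even though it is superfluous here. You correctly observe that, in non-divergence form, the principal coefficient of the Ellis operator satisfies $u^3 + u^{\alpha+2}\psi'(u_{xxx}) \geq u^3 > 0$; this is precisely the point. The Newtonian summand $u^3 u_{xxx}$ removes the degeneracy in $u_{xxx}$ that the $\sigma$-regularisation was introduced to cure in \eqref{eq:PDE}, so for the \emph{unregularised} problem \eqref{eq:PDE-Ellis} one already has a quasilinear parabolic equation with coefficients that are at least Hölder continuous in the lower-order entries. The paper therefore invokes the strong-solution theory of \cite{lienstromberg_local_2020} directly (that reference covers the Ellis law for $\alpha>1$, including the only-Hölder regime $1<\alpha<2$), obtaining maximal positive strong solutions for positive initial data in $W^{4\rho}_{\alpha+1,B}(\Omega)$, $4\rho > 3+1/(\alpha+1)$. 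The only approximation needed is then of the \emph{initial datum}: one picks positive $u_0^{(k)} \in W^{4\rho}_{\alpha+1,B}(\Omega)$ with $\bar u_0^{(k)}=\bar u_0$ converging to $u_0$ strongly in $H^1(\Omega)$, uses the energy--dissipation identity \eqref{eq:energy-diss-sigma-Ellis-strong} and the analogue of Lemma~\ref{lem:uniform_bounds} for the uniform a-priori bounds, and passes to the limit in $k$ (where a Minty-type argument is indeed still used to identify the nonlinear flux, as in Lemma~\ref{lem:limit_flux}). Your scheme — regularise the flux by $\psi_\sigma$, solve the regularised problem, bound uniformly, and pass $\sigma\searrow 0$ via Minty — would also work and nothing in it is incorrect, but it inserts an unnecessary layer of approximation; moreover, since the theorem asks for $u_0 \in H^1(\Omega)$ while the strong-solution machinery wants $u_0$ in $W^{4\rho}_{\alpha+1,B}(\Omega)$, you still have to mollify the initial datum, something your write-up does not spell out. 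In short, the missing simplification is this: once you have noted that the operator is non-degenerate in $u_{xxx}$, you can (and the paper does) drop the $\sigma$-regularisation entirely and approximate only the initial data.
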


\begin{remark}
    For positive initial datum $u_0 \in W^{4\rho}_{\alpha+1,B}(\Omega),\ 4\rho > 3+1/(\alpha+1)$ with $u_0(x) >0$, $x\in \bar{\Omega}$, the problem \eqref{eq:PDE-Ellis} actually possesses a unique maximal strong solution \cite{lienstromberg_local_2020}
\begin{equation*}
	u \in C\bigl([0,T_{\text{max}});W^{4\rho}_{\alpha+1,B}(\Omega)\bigr)
	\cap
	C^{\rho}\bigl([0,T_{\text{max}});L_{\alpha+1}(\Omega)\bigr)
	\cap
	C\bigl((0,T_{\text{max}});W^4_{\alpha+1,B}(\Omega)\bigr)
	\cap
	C^1\bigl((0,T_{\text{max}});L_{\alpha+1}(\Omega)\bigr).
\end{equation*}
Moreover, the solution enjoys the following properties:
\begin{itemize}
    \item[(i)] (Positivity) $u$ is positive
        \begin{equation*}
        	u(t,x) > 0,
            \quad
            0 \leq t < T_{\text{max}},\, x\in \bar{\Omega}.
            \end{equation*}
    \item[(ii)] (Conservation of mass) $u$ conserves its mass in the sense that
        \begin{equation}\label{eq:cons-mass-sigma-Ellis-strong}
            \|u(t)\|_{L_1(\Omega)} 
            =
            \|u_0\|_{L_1(\Omega)},
            \quad 0 \leq t < T_{\text{max}}.
        \end{equation}
    \item[(iii)] (Energy-dissipation identity) $u$ satisfies the energy-dissipation identity
    \begin{equation}\label{eq:energy-diss-sigma-Ellis-strong}
        E[u](t) + \int_0^t D[u](s) 
        =
        E[u_0],
        \quad
        0 \leq t < T_{\text{max}}.
    \end{equation}
    \item[(iv)] (Maximal time of existence) Suppose that $T_{\text{max}} < \infty$. Then
        \begin{equation*}
            \liminf_{t \nearrow T_{\text{max}}} \frac{1}{\min_{x \in \bar{\Omega}} u(t)} 
            + 
            \|u(t)\|_{W^{4\gamma}_{\alpha+1,B}(\Omega)} 
            = 
            \infty
        \end{equation*}
    for all $\gamma \in (\theta,1]$.
\end{itemize}
\end{remark}

\begin{proof}[\textbf{Proof of Theorem \ref{thm:Local_Ex_Ellis}}]
For initial data $u_0 \in W^{4\rho}_{\alpha+1,B}(\Omega),\ 4\rho > 3+1/(\alpha+1)$ with $u_0(x) >0$ we obtain local-in-time strong solutions. Choosing a sequence $\big(u_0^{(k)}\big)_{k \in \mathbb{N}}$ with $u_0^{(k)}(x) > 0,\ x \in \bar{\Omega}$, and $\bar{u}_0^{(k)} = \bar{u}_0$ such that $u_0^{(k)} \to u_0$ strongly in $H^1(\Omega)$ guarantees, together with the energy-dissipation identity \eqref{eq:energy-diss-sigma-Ellis-strong} and similar a-priori bounds as in Lemma \ref{lem:uniform_bounds} that the corresponding strong solutions \(u^{(k)}\) converge weakly in \(L_{\infty}\bigl((0,T);H^1(\Omega)\bigr)\cap L_{\alpha+1}\bigr((0,T);W^3_{\alpha+1,B}(\Omega)\bigr)\) to a weak solution $u \in C\bigl([0,T];H^1(\Omega)\bigr) 
    \cap
    L_{\alpha+1}\bigl((0,T);W^3_{\alpha+1,B}(\Omega)\bigr)$. Positivity, conservation of mass and the energy-dissipation identity for almost every \(t\in [0,T]\) are preserved under taking the weak limit.
\end{proof}

%---------------------------------
\bigskip
    
\noindent\textbf{\textsc{Steady states of \eqref{eq:PDE-Ellis}. }} We now turn to stability. First, we find that the same characterisation of positive steady states as before holds true. This is the content of the following theorem which has already been proved in  \cite[Corollary 6.3]{lienstromberg_local_2020}.

\begin{theorem}[Characterisation of positive steady states]
A function $u \in W^3_{\alpha+1,B}(\Omega)$ is a positive steady-state solution of \eqref{eq:PDE-Ellis} if and only if $u \equiv u_\ast \in \R_{>0}$ is given by positive constant.
\end{theorem}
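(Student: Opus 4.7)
The plan is to mimic the argument used for Theorem \ref{thm:char_steady_states} in the power-law case, exploiting the fact that the dissipation functional for the Ellis equation remains a non-negative integrand that vanishes only when $u_{xxx}\equiv 0$ (since the Newtonian prefactor $1+|uu_{xxx}|^{\alpha-1}\geq 1$ is strictly positive). The direction ``constants are steady states'' is immediate: if $u\equiv u_\ast\in \mathbb{R}_{>0}$, then all spatial derivatives vanish, both the PDE and the boundary conditions are trivially satisfied, and $u_\ast\in W^3_{\alpha+1,B}(\Omega)$.

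For the converse, let $u^\ast\in W^3_{\alpha+1,B}(\Omega)$ be an arbitrary positive steady-state solution. As in the power-law setting I would exploit the energy-dissipation structure: along a (time-independent) solution one has
\begin{equation*}
    0 = \frac{d}{dt} E[u^\ast] = -D[u^\ast] = -\int_\Omega (u^\ast)^3\bigl(1+|u^\ast u^\ast_{xxx}|^{\alpha-1}\bigr)|u^\ast_{xxx}|^2 \, dx.
\end{equation*}
Since $u^\ast(x)>0$ for $x\in\bar\Omega$ and $1+|u^\ast u^\ast_{xxx}|^{\alpha-1}\geq 1$, the integrand is non-negative and controls $|u^\ast_{xxx}|^2$ pointwise up to a positive factor. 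Hence $u^\ast_{xxx}\equiv 0$ on $\bar\Omega$.

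Consequently $u^\ast_{xx}$ is constant and $u^\ast_x$ is affine on the interval $\Omega$. The Neumann-type boundary condition $u^\ast_x=0$ on $\partial\Omega$ (which consists of two points) forces the affine function $u^\ast_x$ to vanish at both endpoints, so $u^\ast_x\equiv 0$. Thus $u^\ast$ is a positive constant, as claimed. I do not anticipate any genuine obstacle: the only subtlety is justifying that the formal identity $\tfrac{d}{dt}E[u^\ast]=-D[u^\ast]$ applies at the regularity level $W^3_{\alpha+1,B}(\Omega)$, but this can be bypassed entirely by arguing directly from the stationary equation $\bigl(u^3(1+|uu_{xxx}|^{\alpha-1})u_{xxx}\bigr)_x=0$: this gives $u^3(1+|uu_{xxx}|^{\alpha-1})u_{xxx}\equiv c$ on $\Omega$, and the boundary condition $u^\ast_{xxx}=0$ on $\partial\Omega$ together with positivity of $u^\ast$ forces $c=0$ and hence $u^\ast_{xxx}\equiv 0$, after which the previous reasoning concludes the proof.
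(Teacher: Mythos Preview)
Your proposal is correct. Note that the paper does not actually give its own proof of this Ellis-law statement; it simply cites \cite[Corollary~6.3]{lienstromberg_local_2020}. Your argument is, however, exactly the adaptation of the paper's proof of the analogous power-law result (Theorem~\ref{thm:char_steady_states}) to the Ellis dissipation functional, and it goes through without change because $1+|u^\ast u^\ast_{xxx}|^{\alpha-1}\geq 1$ keeps the integrand coercive in $|u^\ast_{xxx}|^2$. Your alternative route via the first integral $u^3(1+|uu_{xxx}|^{\alpha-1})u_{xxx}\equiv c$ together with the boundary condition $u_{xxx}=0$ on $\partial\Omega$ is also valid and in fact slightly cleaner, since it avoids invoking the energy-dissipation identity at the borderline regularity $W^3_{\alpha+1,B}(\Omega)$.
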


\noindent\textbf{\textsc{Global existence and exponential stability for \eqref{eq:PDE-Ellis}. }}
It is well-known that for the Newtonian thin-film equation
\begin{equation} \label{eq:PDE-Newtonian}
    \begin{cases}
        u_t + \bigl(u^{3} u_{xxx}\bigr)_x
        =
        0,
        & t>0,\ x \in \Omega,
        \\
        u_x(t,x) = u_{xxx}(t,x) = 0, 
        &
        t > 0,\ x \in \partial\Omega,
        \\
        u(0,x) = u_0(x),
        &
        x \in \Omega,
    \end{cases}
\end{equation}
solutions close to positive steady states converge exponentially fast to equilibrium \cite{bertozzi_lubrication_1996}. We now prove that the same behaviour can be found for Ellis-law thin films. 

\begin{theorem}[Global existence and exponential stability]\label{Thm:Shear_thin_Ellis}
    Fix \(1<\alpha < \infty\). There exists \(\eps>0\) such that, for all positive initial values \(u_0\in H^1(\Omega)\) with \(\|u_0-\bar{u}_0\|_{H^1(\Omega)} < \eps\), there is a global positive weak solution
    \begin{equation*}
        u\in C\bigl([0,\infty);H^1(\Omega)\bigr) \cap L_{\alpha+1,\mathrm{loc}}\bigl((0,\infty);W^3_{\alpha+1,B}(\Omega)\bigr) \quad  \text{with} \quad
    u_t \in L_{\frac{\alpha+1}{\alpha},\text{loc}}\bigl((0,\infty);(W^1_{\alpha+1,B}(\Omega))'\bigr).
    \end{equation*}
    Moreover, there is \(\lambda >0\) and a constant \(C>0\) such that
        \[\|u(t)-\bar{u}_0\|_{H^1(\Omega)} \leq C e^{-\lambda t} \|u_0\|_{H^1(\Omega)}.\]
    Furthermore, we find that the dissipation decreases exponentially along the solution in the following \(L_1\)-in-time sense:
    \begin{equation}\label{eq:L1-in-time-dissipation-thin_Ellis}
        \int_{t/2}^{t} D[u](s) \, ds \leq C e^{-2\lambda t} \|u_0\|_{H^1(\Omega)}^2.
    \end{equation}
\end{theorem}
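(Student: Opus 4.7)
The plan is to adapt the strategy of Theorem \ref{Thm:Shear_thin} while exploiting a decisive structural advantage of Ellis rheology: the dissipation satisfies
\begin{equation*}
D[u] = \int_{\Omega} u^3 \bigl(1 + |uu_{xxx}|^{\alpha-1}\bigr)|u_{xxx}|^2\, dx \geq \int_{\Omega} u^3 |u_{xxx}|^2\, dx,
\end{equation*}
so it always dominates a quadratic Newtonian-type term. On a regime where $u \geq \bar{u}_0/2$, writing $v = u - \bar{u}_0$ (which satisfies $\bar{v} = 0$ and $v_x = 0$ on $\partial\Omega$), this reduces to $D[u] \geq (\bar{u}_0/2)^3 \|v_{xxx}\|_{L_2(\Omega)}^2$, and the $\alpha=1$ case of Lemma \ref{Lem:Poincare} (two applications of Poincaré's inequality) gives $E[u] = E[v] \leq C\|v_{xxx}\|_{L_2(\Omega)}^2$. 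The required integrability is available since for $\alpha > 1$ on a bounded interval $L_{\alpha+1}(\Omega) \hookrightarrow L_2(\Omega)$. Combining these produces a \emph{linear} coercivity $D[u] \geq \mu E[u]$ for some $\mu > 0$ depending only on $\bar{u}_0$ and $\Omega$, in sharp contrast to the sublinear relation $D[u] \geq C E[u]^{(\alpha+1)/2}$ that governed the pure shear-thinning power-law case. This is precisely what upgrades polynomial decay to exponential decay.

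Mimicking the proof of Theorem \ref{Thm:Shear_thin}, I define
\begin{equation*}
\tau = \sup\bigl\{\tilde{T} > 0 : \text{there is a weak solution } u \text{ to \eqref{eq:PDE-Ellis} on } [0,\tilde{T}] \text{ with } \tfrac{1}{2}\bar{u}_0 \leq u \leq 2\bar{u}_0 \bigr\},
\end{equation*}
which is positive by Theorem \ref{thm:Local_Ex_Ellis} together with $H^1$-continuity of $u$. On $[0,\tau]$, inserting $D[u] \geq \mu E[u]$ into the energy-dissipation identity \eqref{eq:energy-dissipation-Ellis} yields $\tfrac{d}{dt} E[u](t) \leq -\mu E[u](t)$, so Gronwall produces $E[u](t) \leq E[u_0]\, e^{-\mu t}$ on $[0,\tau]$. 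Choose $\eps > 0$ small enough that $\|u_0 - \bar{u}_0\|_{H^1(\Omega)} < \eps$ forces $\|u_0 - \bar{u}_0\|_{L_\infty(\Omega)} < \bar{u}_0/4$ by the embedding $H^1(\Omega) \hookrightarrow L_\infty(\Omega)$ in one dimension and Poincaré's inequality. Since $E[u]$ is non-increasing, $\|u(t) - \bar{u}_0\|_{L_\infty(\Omega)} \leq C\sqrt{E[u](t)} \leq C\sqrt{E[u_0]} < \bar{u}_0/4$ on $[0,\tau]$, hence $u$ stays strictly inside $(\bar{u}_0/2, 2\bar{u}_0)$ and a bootstrap as in Remark \ref{Rem:extension} forces $\tau = \infty$, giving the global weak solution in the claimed class.

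The $H^1$-stability estimate then follows from $\|u(t) - \bar{u}_0\|_{H^1(\Omega)}^2 \leq C E[u](t) \leq C e^{-\mu t} \|u_0\|_{H^1(\Omega)}^2$ on setting $\lambda := \mu/4$. For the $L_1$-in-time dissipation bound I avoid invoking Theorem \ref{thm:L1-in-time} and simply integrate \eqref{eq:energy-dissipation-Ellis} over $[t/2, t]$ to obtain
\begin{equation*}
\int_{t/2}^{t} D[u](s)\, ds = E[u](t/2) - E[u](t) \leq E[u](t/2) \leq C e^{-\mu t/2} \|u_0\|_{H^1(\Omega)}^2 = C e^{-2\lambda t} \|u_0\|_{H^1(\Omega)}^2,
\end{equation*}
matching \eqref{eq:L1-in-time-dissipation-thin_Ellis}. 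The main obstacle in this programme is conceptual rather than technical: standard linearised and semigroup methods fail for $1 < \alpha < 2$ because the Ellis nonlinearity is only H\"older continuous in $u_{xxx}$, so one cannot recover exponential stability from an analytic-semigroup framework. The saving grace is precisely the Newtonian quadratic floor in $D[u]$, which enables the bare energy method to yield genuine exponential decay; everything else—choosing $\eps$ small enough to keep $u$ strictly positive and applying bootstrapping—is routine once the coercivity $D[u] \geq \mu E[u]$ is in place.
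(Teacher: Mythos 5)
Your argument is correct and follows the paper's main line essentially verbatim: lower-bound the Ellis dissipation by the Newtonian term $\int_\Omega u^3|u_{xxx}|^2\,dx$, use $u\geq \bar{u}_0/2$ together with the $\alpha=1$ case of Lemma~\ref{Lem:Poincare} to get $D[u]\geq\mu E[u]$, apply Gronwall, and keep $u$ in the band $(\bar{u}_0/2,2\bar{u}_0)$ by shrinking $\eps$ and bootstrapping via Remark~\ref{Rem:extension}. The one place you genuinely deviate is the $L_1$-in-time dissipation estimate~\eqref{eq:L1-in-time-dissipation-thin_Ellis}: the paper derives it by adapting the cut-off argument of Theorem~\ref{thm:L1-in-time}, whereas you just integrate the energy-dissipation identity over $[t/2,t]$ to read off $\int_{t/2}^t D[u]\,ds = E[u](t/2)-E[u](t)\leq E[u](t/2)$ and plug in the exponential decay. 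Your route is more elementary and buys a cleaner constant; Theorem~\ref{thm:L1-in-time} is really designed for the shear-thinning power-law regime where the energy only decays algebraically and one wants a bound that does not presuppose a pointwise-in-time decay rate, so once exponential decay is available the cut-off machinery is redundant. You also make explicit the small point that $L_{\alpha+1}(\Omega)\hookrightarrow L_2(\Omega)$ for $\alpha>1$ on a bounded interval, which is what guarantees $u_{xxx}(t)\in L_2(\Omega)$ and hence the applicability of the $\alpha=1$ Poincar\'e estimate; the paper leaves this implicit when citing Lemma~\ref{Lem:Poincare}.
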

\begin{proof}
    Let \(u_0\in H^1(\Omega)\) with \( \bar{u}_0/2< u_0(x) < 2\bar{u}_0\), \(x\in \bar{\Omega}\) and \(u \in C\bigl([0,T];H^1(\Omega)\bigr) \cap L_{\alpha+1}\bigl((0,T);W^3_{\alpha+1,B}(\Omega)\bigr) \) the solution to \eqref{eq:PDE-Ellis} provided by Theorem \ref{thm:Local_Ex_Ellis}.
    We also define
    \begin{equation*}
    \tau
    =
    \sup\{\tilde{T} > 0;\ \text{$\exists$ a weak solution $u$ to \eqref{eq:PDE-Ellis} on } [0,\tilde{T}]
    \text{ with } \tfrac{1}{2} \bar{u}_0 \leq u(t,x) \leq 2 \bar{u}_0\ \forall\ 0 \leq t \leq \tilde{T}\}.
\end{equation*}
    Then \(\tau \leq T\) because otherwise we can extend weak solutions beyond time \(\tau\).
    
    Next, write \(u(t,x) = \bar{u}_0 + v(t,x)\) for \( (t,x) \in [0,T]\times \Omega \), where due to conservation of mass \(\int_{\Omega} v(t,x) \, dx= 0\) for all \(t\in [0,T]\). Then, by continuity and the definition of \(\tau\), we have \(|v(t,x)| \leq \bar{u}_0/2\) for \(0\leq t \leq \tau\).
    
    We then find, by the energy-dissipation identity \eqref{eq:energy-dissipation-Ellis} and the definition of \( \tau\), that
    \begin{align*}
        \frac{d}{dt} E[u](t) &= -D[u](t) = -\int_{\Omega}u^3(t,x)\bigl(1+|u(t,x)u_{xxx}(t,x)|^{\alpha-1}\bigr)|u_{xxx}(t,x)|^2 \, dx \\
        &\leq - \int_{\Omega} u^3(t,x) |u_{xxx}(t,x)|^2 \, dx \leq -C\int_{\Omega} |u_{xxx}(t,x)|^2 \, dx \leq - C E[u](t)
    \end{align*}
    for almost every \(t\in [0,\tau]\), where in the last step we have applied Lemma \ref{Lem:Poincare}. This yields that \(E[u](t)\) is decreasing and so \(\tau = T\). Applying Gronwall's inequality, we deduce that
    \begin{equation*}
        E[u](t) \leq E[u_0]e^{-Ct}
    \end{equation*}
    for all \(t\in [0,\tau]\). 
    Now choose \(\eps >0\) small enough so that
    \begin{equation*}
        \|u_0-\bar{u}_0\|_{L^{\infty}(\Omega)} \leq CE[u_0]^{\frac{1}{2}} \leq \frac{\bar{u}_0}{2},
    \end{equation*}
    where in the first estimate we have used the embedding $H^1(\Omega) \hookrightarrow L_{\infty}(\Omega)$ and Poincaré's inequality.
   Using this and the fact that \( E[u](t) \) is decreasing, we get 
   \begin{equation*}
        \|u(t)-\bar{u}_0\|_{L^{\infty}(\Omega)} \leq CE[u](t)^{\frac{1}{2}} \leq CE[u_0]^{\frac{1}{2}} \leq \frac{\bar{u}_0}{2}
    \end{equation*}
    for all \(t\in [0,T]\). 
    We can then extend the solution beyond time \(T\) to a global-in-time weak solution \(u\in C\bigl([0,\infty);H^1(\Omega)\bigr) \cap L_{\alpha+1,\mathrm{loc}}\bigl((0,\infty);W^3_{\alpha+1,B}(\Omega)\bigr)\) to \eqref{eq:PDE-Ellis} that satisfies 
    \begin{equation*}
        E[u](t) \leq  E[u_0] e^{-Ct}, \quad 0 \leq  t <\infty.
    \end{equation*}
    By Poincaré's inequality, we then conclude that
    \begin{equation*}
        \|u(t)-u_0\|_{H^1(\Omega)} \leq C E[u(t)]^{1/2} \leq C\|\nabla u_0\|_{L^2(\Omega)} e^{-\lambda t},
    \end{equation*}
    for some \(\lambda >0\) and all \(t\in (0,\infty)\).
    
    The $L_1$-in-time estimate follows from adapting Theorem \ref{thm:L1-in-time} to the new dissipation functional.
\end{proof}

%=============================================================================
%=============================================================================
%=============================================================================

%-------------------------------------------------------
%-------------------------------------------------------
%-------------------------------------------------------

\appendix
\addcontentsline{toc}{section}{APPENDICES}

%-------------------------------------------------------
%-------------------------------------------------------

\section{Proof of Lemma \ref{lem:convergence} and Lemma \ref{lem:limit_flux}}

Here we give precise proofs of the auxiliary results needed to establish local existence of positive weak solutions to the original problem \eqref{eq:PDE} in Section \ref{sec:Local_Existence}. 

%-------------------------------------------------------
%-------------------------------------------------------

%-------------------------------------------------------
%-------------------------------------------------------

\begin{proof}[\textbf{Proof of Lemma \ref{lem:convergence}}]
 \noindent\textbf{(i)}  In Lemma \ref{lem:uniform_bounds} (i) and (iii) we have shown that 
\begin{equation*}
	\begin{cases}
		(u^\sigma)_\sigma \text{ is uniformly bounded in } L_\infty\bigl((0,T);H^1(\Omega)\bigr) %\cap L_{\alpha+1}\bigl((0,T);W^3_{\alpha+1}(S^1)\bigr),
		& \\
		(u^\sigma_t)_\sigma \text{ is uniformly bounded in } L_\frac{\alpha+1}{\alpha}\bigl((0,T);(W^1_{\alpha+1,B}(\Omega))'\bigr).&
    \end{cases}
\end{equation*}
Moreover, in view of the Rellich-Kondrachov theorem, see e.g. \cite[Thm. 6.3]{adams_sobolev_2003}, we have
\begin{equation*}
	H^1(\Omega) \xhookrightarrow[]{c} C^{\rho}(\bar{\Omega}) \hookrightarrow (W^1_{\alpha+1}(\Omega))', \quad \rho \in [0,1/2),
\end{equation*}
where $\xhookrightarrow[]{c}$ indicates compactness of the embedding.
This enables us to use \cite[Cor. 4]{simon_compact_1986}, which gives that the sequence 
\begin{equation*}
	(u^\sigma)_\sigma \text{ is relatively compact in } C\bigl([0,T];C^\rho(\bar{\Omega})\bigr) 
\end{equation*}
with $\rho \in [0,1/2)$ as above.
	    
 \noindent\textbf{(ii)}  This is an immediate consequence of Lemma \ref{lem:uniform_bounds} (ii).
	
 \noindent\textbf{(iii)}  By Lemma \ref{lem:uniform_bounds} (iii), we can extract a subsequence $(u^\sigma_t)_\sigma$ such that
\begin{equation*}
	u^\sigma_t \rightharpoonup v 
	\quad \text{weakly in } L_\frac{\alpha+1}{\alpha}\bigl((0,T);(W^1_{\alpha+1,B}(\Omega))'\bigr) \hookrightarrow \Dcal'\bigl((0,T);(W^1_{\alpha+1,B}(\Omega))'\bigr)
\end{equation*}
for some limit function $v \in L_\frac{\alpha+1}{\alpha}\bigl((0,T);(W^1_{\alpha+1,B}(\Omega))'\bigr)$.
Since, in addition,  
\begin{equation*}
    u^\sigma \longrightarrow u \quad \text{in }  C\bigl([0,T];C^\rho(\bar{\Omega})\bigr) \hookrightarrow \Dcal'\bigl((0,T);(W^1_{\alpha+1,B}(\Omega))'\bigr) \quad 
\rho \in [0,1/2),    
\end{equation*}
we conclude that 
\begin{equation*}
	u^\sigma_t \longrightarrow 
	u_t
	\quad \text{in }
	\Dcal'\bigl((0,T);(W^1_{\alpha+1,B}(\Omega))'\bigr)\bigr),
\end{equation*}
and thus, $v = u_t \in L_\frac{\alpha+1}{\alpha}\bigl((0,T);(W^1_{\alpha+1,B}(\Omega))'\bigr)$.
	
 \noindent\textbf{(iv)}  Note that the strong convergence $u^\sigma \to u$ in $C\bigl([0,T];C^\rho(\bar{\Omega})\bigr),\, \rho \in [0,1/2),$ in (i) implies uniform convergence 
\begin{equation}\label{eq:conv_1}
    u^\sigma 
    \longrightarrow 
    u 
    \quad
    \text{in } 
    C\bigl([0,T]\times \bar{\Omega}\bigr).
\end{equation}
Moreover, by Lemma \ref{lem:uniform_bounds} (v), there exists  some $\hat{u} \in L_{\alpha+1}\bigl((0,T);W^3_{\alpha+1,B}(\Omega)\bigr)$ such that
\begin{equation} \label{eq:conv_2}
    u^\sigma \rightharpoonup \hat{u}
	\quad
	\text{in } L_{\alpha+1}\bigl((0,T);W^3_{\alpha+1,B}(\Omega)\bigr).
\end{equation}
Because of the uniqueness of the limit function, we infer from  \eqref{eq:conv_1} and \eqref{eq:conv_2} that
\begin{equation*}
     u^\sigma \rightharpoonup u
    \quad
    \text{in } 
    L_{\alpha+1}\bigl((0,T);W^3_{\alpha+1,B}(\Omega)\bigr).
\end{equation*}
In virtue of the weak lower-semicontinuity of the norm and Lemma \ref{lem:uniform_bounds} (iv) and (v), we finally obtain
\begin{equation} \label{eq:lsc}
	\begin{cases}
	    \|u_{xxx}\|_{L_{\alpha+1}((0,T)\times \Omega)}
	    \leq
	    \liminf_{\sigma \to 0} \|u^\sigma_{xxx}\|_{L_{\alpha+1}((0,T)\times \Omega)}
	    \leq 
	    C
	    & \\
	    \|u\|_{L_{\alpha+1}((0,T);W^3_{\alpha+1,B}(\Omega))}
	    \leq
	    \liminf_{\sigma \to 0} \| u^\sigma\|_{L_{\alpha+1}((0,T);W^3_{\alpha+1,B}(\Omega))}
	    \leq 
	    C &
	\end{cases}
\end{equation}
for some generic constant $C > 0$ that is independent of $\sigma$. 

 \noindent\textbf{(v)}  This follows by reasoning similarly to (iii) and the proof is complete.
\end{proof}

%-------------------------------------------------------
%-------------------------------------------------------

\begin{proof}[\textbf{Proof of Lemma \ref{lem:limit_flux}}]
The proof is divided into several steps. Throughout the proof, when there is no fear of ambiguity, we pass to a subsequence without relabelling it.
	
 \noindent\textbf{(i)}  First, by Lemma \ref{lem:convergence} (ii), we know that $|u^\sigma|^{\alpha+2} \psi_\sigma(u^\sigma_{xxx}\bigr)$ is weakly sequentially compact, i.e., there is an element $\chi \in L_\frac{\alpha+1}{\alpha}\bigl((0,T)\times \Omega)\bigr)$ such that
\begin{equation*}
    |u^\sigma|^{\alpha+2} \psi_\sigma(u^\sigma_{xxx}) 
    \rightharpoonup
    \chi
    \quad
	\text{weakly in } 
	L_\frac{\alpha+1}{\alpha}\bigl((0,T)\times \Omega)\bigr).
\end{equation*}
It remains to identify the limit flux $\chi$.
	
 \noindent\textbf{(ii)}  Next, in view of Lemma \ref{lem:uniform_bounds} (v) and the lower semicontinuity of the norm,
\begin{equation*}
    u_x \in L_{\alpha+1}\bigl((0,T);W^1_{\alpha+1,0}(\Omega) \cap W^2_{\alpha+1}(\Omega)\bigr) . 
\end{equation*}
Thus, we can take $\varphi = u_{xx} \in L_{\alpha+1}\bigl((0,T);W^1_{\alpha+1}(\Omega)\bigr)$ as a test function in the equation \eqref{eq:PDE_reg} for $u^\sigma$. This gives
\begin{equation*}
    \int_0^{T} \int_{\Omega} u^\sigma_t u_{xx}\, dx\, dt 
    + 
    \int_0^{T} \int_{\Omega} |u^\sigma|^{\alpha+2} \psi_\sigma(u^\sigma_{xxx}) 
    u_{xxx}\, dx\, dt 
    = 0.
\end{equation*}
Using Lemma \ref{lem:convergence} (iii), the first term satisfies
\begin{equation*}
    \int_0^{T} \int_{\Omega} u^\sigma_t\,  u_{xx}\, dx\, dt 
    \longrightarrow 
	\int_0^{T} \int_{\Omega} u_t\,  u_{xx}\, dx\, dt
	=
	E[u](T) - E[u](0)
\end{equation*}
as $\sigma \searrow 0$. For the second term, we infer from Lemma \ref{lem:convergence} (ii) that
\begin{equation*}
    \int_0^{T} \int_{\Omega} |u^\sigma|^{\alpha+2} \psi_\sigma(u^\sigma_{xxx}) u_{xxx}\, dx\, dt
	\longrightarrow 
	\int_0^{T} \int_{\Omega} \chi u_{xxx}\, dx\, dt,
\end{equation*}
as $\sigma \searrow 0$. Consequently, we obtain the identity
\begin{equation*}
	E[u](t) + \left\langle \chi | u_{xxx} \right\rangle_{L_{\alpha+1}}
	=
	E[u_0]
\end{equation*}
for almost every $t \in [0,T]$. 

 \noindent\textbf{(iii)}  We now use Minty's trick to identify the limit flux $\chi$. Note that the operator 
\begin{equation*}
	\begin{cases}
	    \psi_\sigma \colon L_{\alpha+1}\bigl((0,T)\times \Omega\bigr) 
	    \longrightarrow L_{\frac{\alpha+1}{\alpha}}\bigl((0,T)\times \Omega\bigr), & \\
    	\psi_\sigma(v) = \bigl(|v|^2 + \sigma^2\bigr)^\frac{\alpha-1}{2} v &
    \end{cases}
\end{equation*}
is monotone, i.e. for all $v,w \in L_{\alpha+1}\bigl((0,T)\times \Omega\bigr)$ with $v\neq w$ it holds that 
\begin{equation*}
	\langle \psi_\sigma(v) - \psi_\sigma(w)|v-w\rangle_{L_{\alpha+1}}
	=
	\int_0^T \int_{\Omega} \bigl(\psi_\sigma(v) - \psi_\sigma(w)\bigr) (v-w)\, dx\, dt
	> 0.
\end{equation*} 
This follows immediately from the monotonicity of the function 
$\psi_{\sigma}: \R \rightarrow \R$, $s \mapsto (s^2+ \sigma^2)^\frac{\alpha-1}{2} s$. 
From now on, we simply write $\langle v | w\rangle$ for the dual pairing $\langle v | w \rangle_{L_{\alpha+1}((0,T)\times \Omega)}$ between $v \in L_\frac{\alpha+1}{\alpha}\bigl((0,T)\times \Omega\bigr)$ and $w \in L_{\alpha+1}\bigl((0,T)\times \Omega\bigr)$. Let now $\phi \in W^3_{\alpha+1}\bigl((0,T)\times \Omega\bigr)$. In view of the monotonicity of $\psi_\sigma$, we have
\begin{align*}
	0
	&\leq
	\left\langle |u^\sigma|^{\alpha+2} \psi_\sigma(u^\sigma_{xxx}) - |u^\sigma|^{\alpha+2} \psi_\sigma(\phi_{xxx}) | (u^\sigma-\phi)_{xxx} 
	\right\rangle
	\\
	&=	
	\left\langle |u^\sigma|^{\alpha+2} \psi_\sigma(u^\sigma_{xxx}) | u^\sigma_{xxx} \right\rangle
	-
	\left\langle |u^\sigma|^{\alpha+2} \psi_\sigma(u^\sigma_{xxx}) | \phi_{xxx} \right\rangle
	\\
	&\quad
	-
	\left\langle |u^\sigma|^{\alpha+2} \psi_\sigma(\phi_{xxx}) | u^\sigma_{xxx} \right\rangle
	+
	\left\langle |u^\sigma|^{\alpha+2} \psi_\sigma(\phi_{xxx}) | \phi_{xxx} \right\rangle.
\end{align*}
We consider the four dual pairings on the right-hand side separately.

First, we rewrite the energy-dissipation identity for the problem \eqref{eq:PDE_reg} as 
\begin{equation*}
	\left\langle |u^\sigma|^{\alpha+2} \psi_\sigma(u^\sigma_{xxx}) | u^\sigma_{xxx}
	\right\rangle =
	E[u_0] - E[u^\sigma](t) \quad \text{for almost every } t\in [0,T]. 
\end{equation*}
Thanks to Lemma \ref{lem:convergence} (i) we know that $u^\sigma(t) \to u(t)$ in $H^1(\Omega)$ for almost every $t \in [0,T]$, and hence, as $\sigma \searrow 0$,  we have
\begin{equation} \label{eq:Minty_1}
	\left\langle |u^\sigma|^{\alpha+2} \psi_\sigma(u^\sigma_{xxx}) | u^\sigma_{xxx}
	\right\rangle
	\longrightarrow
	E[u_0] - E[u](t) \quad \text{for almost every } t\in [0,T].
\end{equation} 
For the second dual pairing, we get from Lemma \ref{lem:convergence} (ii) that 
\begin{equation}\label{eq:Minty_2}
	\left\langle |u^\sigma|^{\alpha+2} \psi_\sigma(u^\sigma_{xxx}) | \phi_{xxx} \right\rangle
	\longrightarrow
	\left\langle \chi | \phi_{xxx} \right\rangle,
	\quad
	\text{as } \sigma \searrow 0. 
\end{equation}
For the third pairing, we use Lemma \ref{lem:convergence} (i) and (iv) to obtain
\begin{equation*}
	\begin{cases}
		u^\sigma \longrightarrow u 
		\quad \text{strongly in } C\bigl([0,T]\times \bar{\Omega}\bigr) &
		\\
		u^\sigma_{xxx}
		\rightharpoonup
		u_{xxx}
		\quad \text{weakly in } L_{\alpha+1}\bigl((0,T)\times \Omega\bigr), &
	\end{cases}
\end{equation*}
and this implies
\begin{equation}\label{eq:Minty_3}
	\left\langle |u^\sigma|^{\alpha+2} \psi_\sigma(\phi_{xxx}) | u^\sigma_{xxx} \right\rangle
	\longrightarrow
	\left\langle |u|^{\alpha+2} \psi(\phi_{xxx}) | u_{xxx} \right\rangle,
    \quad
    \text{as } \sigma \searrow 0.
\end{equation}
Clearly, for the fourth pairing, we have
\begin{equation}\label{eq:Minty_4}
    \left\langle |u^\sigma|^{\alpha+2} \psi_\sigma(\phi_{xxx}) | \phi_{xxx} \right\rangle  
    \longrightarrow
    \left\langle |u|^{\alpha+2} \psi(\phi_{xxx}) | \phi_{xxx} \right\rangle,
    \quad
    \text{as } \sigma \searrow 0.
\end{equation} 
Combining \eqref{eq:Minty_1}--\eqref{eq:Minty_4} yields the inequality
\begin{equation*}
	0 
	\leq
	E[u_0] - E[u](t)
	-
	\left\langle \chi | \phi_{xxx} \right\rangle
	-
	\left\langle |u|^{\alpha+2} \psi(\phi_{xxx}) | (u - \phi)_{xxx}  \right\rangle,
\end{equation*}
and taking into account the identity 
\begin{equation*}
	E[u](t) + \left\langle \chi | u_{xxx} \right\rangle
	=
	E[u_0]
\end{equation*}
proved in step (ii), for almost every $t \in [0,T]$, we 
get that
\begin{equation*}
	0 
	\leq
	\left\langle \chi - |u|^{\alpha+2} \psi(\phi_{xxx}) | (u - \phi)_{xxx} \right\rangle.
\end{equation*}
Choosing $\phi = u - \lambda v$ for some arbitrary $v \in W^3_{\alpha+1}\bigl((0,T)\times \Omega\bigr)$ and $\lambda > 0$, gives the inequality
\begin{equation*}
	\left\langle \chi - |u|^{\alpha+2} \psi\bigl((u - \lambda v)_{xxx}\bigr) | v_{xxx} \right\rangle
	\geq 0
\end{equation*}
and thus in the limit $\lambda \searrow 0$ we deduce
\begin{equation*}
	\left\langle \chi - |u|^{\alpha+2} \psi(u_{xxx}) | v_{xxx} \right\rangle
	\geq 0,
	\quad
	v \in W^3_{\alpha+1}\bigl((0,T)\times \Omega\bigr),
\end{equation*}
for almost every $t \in [0,T]$. Now taking $\phi = u + \lambda v$, we see that
\begin{equation*}
	\left\langle \chi - |u|^{\alpha+2} \psi(u_{xxx}) | v_{xxx} \right\rangle
	\leq 0,
	\quad
	v \in W^3_{\alpha+1}\bigl((0,T)\times \Omega\bigr). 
\end{equation*}
Hence, we have shown that
\begin{equation*}
	\left\langle \chi - |u|^{\alpha+2} \psi(u_{xxx}) | v_{xxx} \right\rangle
	= 0,
	\quad
	v \in W^3_{\alpha+1}\bigl((0,T)\times \Omega\bigr),
\end{equation*}
from which, since $v \in W^3_{\alpha+1}\bigl((0,T)\times \Omega\bigr)$ is arbitrary, we are able to identify
\begin{equation*}
	\chi 
	=
	|u|^{\alpha+2} \psi(u_{xxx})
	\in
	L_\frac{\alpha+1}{\alpha}\bigl((0,T)\times \Omega\bigr).
\end{equation*}
This completes the proof.
\end{proof}

%=============================================================================
%=============================================================================
\section*{Acknowledgements}
\noindent KN is partially supported by the Austrian Science Fund (FWF) project  F\,65. JJ and CL have been partially supported by the Deutsche Forschungsgemeinschaft (DFG, German Research Foundation) through the collaborative research centre `The mathematics of emerging effects' (CRC 1060, Project-ID  211504053) and the Hausdorff Center for Mathematics (GZ 2047/1, Project-ID 390685813).

\printbibliography

\end{document}